\definecolor{labelkey}{rgb}{0.6,0,1}
\newcounter{corr}
\definecolor{violet}{rgb}{0.580,0.,0.827}
\newcommand{\corr}[3]{\typeout{Warning : a correction remains in page
\thepage}
				\stepcounter{corr}        
				{\color{blue}\ifmmode\text{\,\sout{\ensuremath{#1}}\,}\else\sout{#1}\fi}
       {\color{red}#2}
       {\color{violet} #3}}
\numberwithin{equation}{section}
\def\ctel#1{\ensuremath{\Cl[ctrcst]{#1}}}
\def\cter#1{\ensuremath{\Cr{#1}}}
 \newtheorem{thm}{Theorem}[section]
 \newtheorem{lem}[thm]{Lemma}
 \newtheorem{exam}[thm]{Example}
 \newtheorem{cor}[thm]{Corollary}
 \newtheorem{rem}[thm]{Remark}
 \newtheorem{defn}[thm]{Definition}
 \def\Id{\mathop{\rm Id}\nolimits}
 \def\span{\mathop{\rm Span}\nolimits}
 \newcommand{\R}{{\mathbb R}}           
\newcommand{\N}{{\mathbb N}}        
\author{W. Arendt}
\address{Wolfgang Arendt, Institute of Applied Analysis, University of Ulm. Helmholtzstr. 18, D-89069 Ulm (Germany)} 
\email{wolfgang.arendt@uni-ulm.de}
\author{I. Chalendar}
\address{Isabelle Chalendar,  Universit\'e Gustave Eiffel, LAMA, (UMR 8050), UPEM, UPEC, CNRS, F-77454, Marne-la-Vallée (France)}
\email{isabelle.chalendar@univ-eiffel.fr}
\author{R. Eymard}
\address{Robert Eymard,  Universit\'e Gustave Eiffel, LAMA, (UMR 8050), UPEM, UPEC, CNRS, F-77454, Marne-la-Vallée (France)}
\email{robert.eymard@univ-eiffel.fr}
\title[Error estimate for linear parabolic problems ]{Space-time error estimates \\ for approximations of linear parabolic problems \\ with generalized time boundary conditions}
\keywords{ }
\subjclass[2010]{65N30,47A07,47A52,46B20}
\begin{document}

\begin{abstract}
We first give a general error estimate for the nonconforming approximation of a problem for which a Banach-Ne\v cas-Babu\v ska (BNB) inequality holds. This framework covers parabolic problems with general conditions in time (initial value problems as well as periodic problems) under minimal regularity assumptions. We consider approximations by two types of space-time discretizations, both based on a conforming Galerkin method in space. The first one is the Euler $\theta-$scheme. In this case, we show that the BNB inequality is always satisfied, and may require an extra condition on the time step for $\theta\le \frac 1 2$. The second one is the time discontinuous Galerkin method, where the BNB condition holds without any additional condition.
\end{abstract}

\maketitle

\section{Introduction}

In the case of an elliptic problem, posed under the form: find $u\in U$, where $U$ is a Hilbert space over $\mathbb{K}=\mathbb{R}$ or $\mathbb{C}$, such that
\[
 a(u,v) = L(v)\mbox{ for all }v\in U,
\]
where $a(\cdot,\cdot)$ is a continuous ($|a(u,v)|\le M \Vert u\Vert_U\Vert v\Vert_U$) coercive ($|a(u,u)|\ge \alpha \Vert u\Vert_U^2$) bilinear form on $U$ and $L$ is a continuous linear form, C\'ea's Lemma provides an optimal a priori error estimate for an approximation $u_h\in U_h$, where $U_h$ is a finite dimensional supspace of $U$, such that
\[
  a(u_h,v) = L(v)\mbox{ for all }v\in U_h.
\]
C\'ea's error estimate reads as follows:
\[
 \Vert u - u_h\Vert_U \le \frac {M}{\alpha} \min_{v\in U_h} \Vert u - v\Vert_U.
\]
Note that the same result holds if we relax the coercivity hypothesis to an inf-sup hypothesis, also called Banach-Ne\v cas-Babu\v ska condition \cite{XZ03,EG04,boiv2019app,ACE} and \cite[Theorem 9.42]{AU23}, that is
\[
\sup_{v\in U_h} |a(u,v)|\ge \alpha \Vert u\Vert_U \Vert v\Vert_U\mbox{ for all }u\in U_h.
\]
It is important to notice that these error estimates  hold under minimal regularity assumptions, since it is only assumed that $u\in U$ is the solution of the continuous elliptic problem.

\medskip

In the case of parabolic problems, one can find  many results on a priori error estimates  in the literature, assuming some additional regularity for the solution of the continuous problem (see for example \cite[Theorem 6.29]{EG04}).
One can also find a few results providing a priori error estimates similar to C\'ea's Lemma in the case of semi-discrete numerical schemes (continuous in time, discrete in space), see for example \cite{chryhou2002err,tant2016proj}. In the case of fully discrete time-space problems, let us cite 
 \cite{schwab2009parab,boiv2019app}, where optimal error estimates are derived for specific time-space approximations of parabolic problems. 
 In \cite{urban2012error}, a time Crank-Nicolson scheme is used, and optimal error estimates are provided. 
 In \cite[Theorem 6]{saito2021var}, an error estimate for the time discontinuous Galerkin scheme is given in terms of discrete norms for functions with minimal regularity in space and fixed regularity in time.

\medskip

Following the spirit of these papers, the aim of this article is to give an error estimate for linear parabolic problems, which holds
\begin{itemize}
\item for general  boundary conditions in time (including initial value problems as well as problems which are periodic or antiperiodic   in time),
\item for  solutions with minimal regularity, both in space and time,
 \item for a large class of schemes including the Euler $\theta-$schemes and the time discontinuous Galerkin scheme (which also includes the Euler implicit scheme).
\end{itemize}

This goal is achieved in this paper as follows.

\medskip

In Section \ref{sec:NCBNB} we provide a nonconforming framework for the approximation of a linear problem which can be written under a weak form against a suitable space of test functions, verifying a uniform Banach-Ne\v cas-Babu\v ska condition. Note that, in this framework, the spaces, where  the approximating solutions live, are not necessarily identical nor included in the space containing the solution of the continuous problem. We then obtain an optimal error estimate in the sense of C\'ea's Lemma. 
\medskip

In Section \ref{sec:pbcont} we show that parabolic problems with general time boundary conditions can be considered in the framework of  Section \ref{sec:NCBNB},  allowing  error estimates for nonconforming-in-time methods, which are easy to formulate.  They are valid even when  the solution of the continuous problem has only  minimal regularity.

\medskip

The proof that the explicit or implicit Euler scheme, or more generally the Euler $\theta-$scheme with $\theta\in[0,1]$, with conforming Galerkin approximation in space fit in our  framework is therefore done using the results of  Section \ref{sec:NCBNB} and Lemma \ref{lem:suffbnb} given in the appendix.   Here   stability conditions on the time step play an important role in the case $\theta\le \frac 1 2$. They are linked with the comparison   of norms in the finite dimensional setting. The implicit Euler scheme is also a particular case of the time discontinuous Galerkin method \cite{eriksson1985dg,saito2021var}, that we  investigate in Section \ref{sec:dg} using the same framework. Our approach is comparable to what is done in \cite{eym2018disc,gdm} for the space-discontinuous Galerkin method in the elliptic case, whereas we now treat the parabolic equation.

Appendix \ref{sec:hilbert} gives technical results on operators on Hilbert spaces needed for proving  uniform BNB estimates in the considered time discretizations. 
Appendix \ref{sec:gram} provides the results necessary for computing the coefficients of the inverse of the Gram matrix which are used  in  Section \ref{sec:dg}. \\  

In the whole paper $\N$ denotes the set of all non-negative integers, whereas $\N^\star$ is used for the set of all positive integers.  
All the vector spaces are considered over $\mathbb{R}$. 

\section{A non-conforming Banach-Ne\v cas-Babu\v ska theorem} \label{sec:NCBNB}

Before introducing our approximation theorem, we first study the relation between inf-sup and dual inf-sup conditions. It will be convenient to introduce the notion of limit of a sequence of subspaces of a Banach space (see Subsection~\ref{sec:2.2}).

\subsection{The inf-sup condition and its dual version}\label{sec:infsup}
Let $U$ and $V$ be normed vector spaces, and let $b:U\times V\to\mathbb{R}$ be a bilinear form. We say that the bilinear form $b$ satisfies the \textit{inf-sup condition} if  there exists $\beta>0$ such that
\[
 \sup_{v\in V,\Vert v\Vert_V \le 1}b(u,v)\ge\beta\Vert u\Vert_U\hbox { for all }u\in U.
\]
This is equivalent to \[\inf_{u\in U,\Vert u\Vert_U\leq 1} \sup_{v\in V,\Vert v\Vert_V \le 1} b(u,v)>0,\]
which justifies the terminology. 

If $b$ is continuous on $U\times V$, then, for all $u\in U$, $Bu = b(u,\cdot)$ defines a bounded linear mapping $B\in\mathcal{L}(U,V')$. If $U$ is complete, then the inf-sup condition is equivalent to $B$ being injective with closed range. So the inf-sup condition is an important step to prove invertibility of $B$. Indeed, if $V$ is a reflexive Banach space then $B$ is invertible if and only if $b$ satisfies the inf-sup condition and the \textit{dual uniqueness condition}
\begin{equation}\label{eq:2.3}
 \hbox{given }v\in V, \ b(u,v) = 0\hbox{ for all }u\in U\hbox{ implies } v=0.
\end{equation}
In fact,  if $V$ is reflexive,  then \eqref{eq:2.3} is equivalent to the density of  ${\rm ran}(B)$. We now study the relation between the inf-sup condition and the dual inf-sup condition.
\begin{lem}\label{lem:bnbstargen} Let $U$ and $V$ be normed vector spaces, $b:U\times V\to\mathbb{R}$  a bilinear form and $\beta>0$ such that
\begin{itemize}
 \item[(a)] $\displaystyle\sup_{v\in V,\Vert v\Vert_V \le 1}b(u,v)\ge\beta\Vert u\Vert_U$ for all $u\in U$,
 \item[(b)] $b(u,.)\in V'$ for all $u\in U$ such that   the mapping $B:U\to V'$, $u\mapsto b(u,\cdot)$ is surjective.
\end{itemize}
Then $\displaystyle\sup_{u\in U,\Vert u\Vert_U \le 1}b(u,v)\ge\beta\Vert v\Vert_V$ for all $v\in V$.
 \end{lem}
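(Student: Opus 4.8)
The plan is to reformulate everything in terms of the operator $B\colon U\to V'$, $Bu=b(u,\cdot)$, and to show that hypotheses (a) and (b) together force $B$ to be a bijection whose inverse is controlled by $1/\beta$. First I would observe that, since the unit ball of $V$ is symmetric, hypothesis (a) says exactly that
\[
\Vert Bu\Vert_{V'}=\sup_{\Vert v\Vert_V\le 1}b(u,v)\ge \beta\Vert u\Vert_U\quad\hbox{for all }u\in U,
\]
so $B$ is bounded below and in particular injective. Hypothesis (b) gives that $B$ is surjective. Hence $B$ is a bijection, and the lower bound yields $\Vert B^{-1}\varphi\Vert_U\le \tfrac1\beta\Vert\varphi\Vert_{V'}$ for every $\varphi\in V'$; note that this requires neither completeness of $U$ or $V$ nor any open mapping theorem, since the estimate on $B^{-1}$ is read off directly from the lower bound.

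Next, fixing $v\in V$ with $v\ne 0$ (the case $v=0$ being trivial), I would use the Hahn-Banach theorem to pick a norming functional $\varphi\in V'$ with $\Vert\varphi\Vert_{V'}=1$ and $\varphi(v)=\Vert v\Vert_V$. Using surjectivity, set $u=\beta B^{-1}\varphi$, so that $\Vert u\Vert_U\le \beta\cdot\tfrac1\beta\Vert\varphi\Vert_{V'}=1$ and
\[
b(u,v)=(Bu)(v)=\beta\,\varphi(v)=\beta\Vert v\Vert_V.
\]
Since this particular $u$ lies in the closed unit ball of $U$, it follows that $\sup_{\Vert u\Vert_U\le 1}b(u,v)\ge \beta\Vert v\Vert_V$, which is the desired dual inf-sup inequality.

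The argument is short and largely mechanical once the right viewpoint is adopted; the only genuinely delicate point is to resist assuming more than is given. In particular I would be careful not to invoke continuity of $B$ (i.e. an upper bound $\Vert Bu\Vert_{V'}\le M\Vert u\Vert_U$), which is neither assumed nor needed: the entire proof uses only the lower bound from (a), the surjectivity from (b), and Hahn-Banach. The one thing to verify cleanly is that bounded-below plus surjective really does produce a two-sided inverse with $\Vert B^{-1}\Vert\le 1/\beta$ in the absence of completeness, which is immediate here because the norm control on $B^{-1}$ comes directly from the inequality in (a) rather than from a Banach-space inversion theorem.
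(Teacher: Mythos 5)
Your proof is correct and is essentially the paper's own argument: both use Hahn--Banach to produce a norming functional $L=\varphi$ for $v$, hypothesis (b) to find a preimage $u_0=B^{-1}\varphi$, and hypothesis (a) to bound $\Vert u_0\Vert_U\le\frac1\beta$, from which the dual inequality follows. Your operator-theoretic packaging (normalizing $u=\beta B^{-1}\varphi$ into the unit ball) is only a cosmetic variant of the paper's estimate $\Vert v_0\Vert_V=b(u_0,v_0)\le\Vert u_0\Vert_U\,\mathcal N(v_0)$.
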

\begin{proof} Let $v_0\in V$ and let $\displaystyle\mathcal{N}(v_0) = \sup_{u\in U,\Vert u\Vert_U \le 1}b(u,v_0)$. Then
\[
 b(u,v_0)\le \Vert u\Vert_U \mathcal{N}(v_0)\hbox{ for all }u\in U.
\]
By the Hahn-Banach theorem, there exists $L\in V'$, with $\Vert L\Vert_{V'} =1$ and $\Vert v_0\Vert_V = L(v_0)$. By Hypothesis (b), there exists $u_0\in U$ such that
\[
 b(u_0,v) = L(v)\hbox{ for all }v\in V.
\]
From Hypothesis (a) we deduce that $\beta\Vert u_0\Vert_U\le \Vert L\Vert_{U'}$. This yields
\[
 \Vert v_0\Vert_V = L(v_0) = b(u_0,v_0)\le \Vert u_0\Vert_U \mathcal{N}(v_0) \le \frac 1 \beta \mathcal{N}(v_0),
\]
which proves the conclusion of the lemma.
\end{proof}
The following corollary will be  used later for obtaining the dual BNB-condition \eqref{eq:bnbstar} from \eqref{eq:betabnbn}.
\begin{cor}\label{cor:bnbstar} Let $U$ and $V$ be normed vector spaces having the same finite dimension, and let $b:U\times V\to\mathbb{R}$ be a bilinear form. Then the following assertions are equivalent for any $\beta>0$.
\begin{itemize}
 \item[(a)] $\displaystyle\sup_{v\in V,\Vert v\Vert_V \le 1}b(u,v)\ge\beta\Vert u\Vert_U$ for all $u\in U$,
 \item[(b)] $\displaystyle\sup_{u\in U,\Vert u\Vert_U \le 1}b(u,v)\ge\beta\Vert v\Vert_V$ for all $v\in V$.
\end{itemize}
 \end{cor}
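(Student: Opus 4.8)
The plan is to deduce both implications from Lemma \ref{lem:bnbstargen}, exploiting finite-dimensionality to verify the surjectivity hypothesis (b) of that lemma. First I would record the setup: since $U$ and $V$ have the same finite dimension $n$, the bilinear form $b$ is automatically continuous, so $b(u,\cdot)\in V'$ for every $u\in U$ and the linear map $B:U\to V'$, $u\mapsto b(u,\cdot)$ is well defined; the transposed map $\tilde B:V\to U'$, $v\mapsto b(\cdot,v)$ is well defined for the same reason. Because the spaces are real, replacing $v$ by $-v$ gives $\sup_{\Vert v\Vert_V\le 1}b(u,v)=\Vert Bu\Vert_{V'}$, so assertion (a) reads exactly $\Vert Bu\Vert_{V'}\ge\beta\Vert u\Vert_U$ for all $u\in U$, and likewise (b) reads $\Vert\tilde B v\Vert_{U'}\ge\beta\Vert v\Vert_V$ for all $v\in V$.

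For the implication $(a)\Rightarrow(b)$, the estimate $\Vert Bu\Vert_{V'}\ge\beta\Vert u\Vert_U$ immediately forces $B$ to be injective. Here is the crux: since $\dim U=n=\dim V=\dim V'$, an injective linear map between spaces of equal finite dimension is automatically surjective by rank-nullity. Hence both hypotheses (a) and (b) of Lemma \ref{lem:bnbstargen} hold for $b$, and the conclusion of that lemma is precisely assertion (b) of the corollary.

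The converse $(b)\Rightarrow(a)$ follows by the identical argument applied to the transposed form $\tilde b(v,u):=b(u,v)$ on $V\times U$. Indeed, assertion (b) is the inf-sup condition for $\tilde b$, it makes $\tilde B$ injective, hence surjective by the same equality of dimensions, and Lemma \ref{lem:bnbstargen} applied to $\tilde b$ delivers assertion (a).

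I do not expect any genuine obstacle here. The single point that must be handled correctly is that finite-dimensionality upgrades the injectivity produced by the inf-sup estimate into the surjectivity demanded as hypothesis (b) of Lemma \ref{lem:bnbstargen}; the symmetry of the two assertions under exchanging the roles of $U$ and $V$ then makes the reverse implication automatic, with the same constant $\beta$.
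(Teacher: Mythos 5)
Your argument is correct and is essentially the paper's own proof: the authors likewise deduce the corollary from Lemma \ref{lem:bnbstargen} by observing that the inf-sup hypothesis makes $B$ injective and that, for spaces of equal finite dimension, injectivity implies the surjectivity required as hypothesis (b) of that lemma, with the converse following by symmetry. Your write-up merely makes explicit the identification $\sup_{\Vert v\Vert_V\le 1}b(u,v)=\Vert Bu\Vert_{V'}$ and the rank-nullity step, which the paper leaves implicit.
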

 \begin{proof} For finite dimensional vector spaces with the same dimension, the surjectivity is a consequence of the one-to-one property. This, in turn, is an  immediate consequence of the inf-sup hypothesis.
 \end{proof}

\subsection{The limit of a sequence of subspaces for the strong and the weak topologies}\label{sec:2.2}

The following notion plays an important role in our paper.
\begin{defn}[Limit of a sequence of subspaces for the strong topology]\label{def:limseqspace}
 Let $Z$ be a Banach space and let $(X_n)_{n\in\mathbb{N}}$ be a sequence of subspaces of $Z$. We define the {\rm limit} of the sequence $(X_n)_{n\in\mathbb{N}}$ by
\begin{equation}\label{eq:defF}
\displaystyle\lim_{n\to\infty} X_n := \{x\in Z:~\hbox{for all }n\in\mathbb{N}\hbox{ there exists }x_n\in X_n\ \hbox{ such that }\displaystyle\lim_{n\to\infty} x_n = x\}.
\end{equation}
\end{defn}

\begin{lem}\label{lem:limFn}
Let $Z$ be a Banach space and let $(X_n)_{n\in\mathbb{N}}$ be a sequence of subspaces of $Z$. 
Then $X = \displaystyle\lim_{n\to\infty} X_n$ is closed. 
\end{lem}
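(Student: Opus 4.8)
The plan is to prove closedness directly from Definition~\ref{def:limseqspace}: I take a sequence $(y^{(k)})_{k\in\N}$ in $X$ that converges to some $y\in Z$ and I show $y\in X$, i.e.\ I construct for every $n$ a vector $x_n\in X_n$ with $x_n\to y$. The whole point is that each $y^{(k)}$ comes equipped only with its \emph{own} approximating sequence $(x_n^{(k)})_{n\in\N}\subset X_n$ coming from \eqref{eq:defF}, and these infinitely many sequences must be spliced into a single sequence indexed by $n$. This is a diagonal argument, and keeping the indices consistent (so that $x_n$ genuinely lands in $X_n$) is the only real obstacle.

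First I unwind the definition: since $y^{(k)}\in X$, for each $k$ there is a sequence $(x_n^{(k)})_{n\in\N}$ with $x_n^{(k)}\in X_n$ for all $n$ and $x_n^{(k)}\to y^{(k)}$ as $n\to\infty$. I then pick, for each $k$, an index $N_k$ such that $\Vert x_n^{(k)}-y^{(k)}\Vert\le \frac1k$ for all $n\ge N_k$, and I arrange $N_1<N_2<\cdots$ (replacing $N_k$ by $\max(N_k,N_{k-1}+1)$ if necessary). This is the step where the speed of each individual convergence is recorded.

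Next I build the diagonal sequence: for indices $n$ with $N_k\le n<N_{k+1}$ I set $x_n:=x_n^{(k)}$, and I set $x_n:=0$ for $n<N_1$, which is legitimate since $0\in X_n$. By construction $x_n\in X_n$ for every $n$, so the candidate sequence respects the subspaces. To show $x_n\to y$, for $N_k\le n<N_{k+1}$ I estimate via the triangle inequality $\Vert x_n-y\Vert\le \Vert x_n^{(k)}-y^{(k)}\Vert+\Vert y^{(k)}-y\Vert\le \frac1k+\Vert y^{(k)}-y\Vert$. As $n\to\infty$ the associated index $k$ also tends to infinity (because $n\ge N_k$), so both terms on the right go to $0$; hence $x_n\to y$, which gives $y\in X$ and proves that $X$ is closed.

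I would also note, although it is not required by the statement, that the same coordinatewise splicing shows that $X$ is a linear subspace: if $x,x'\in X$ are approximated by $(x_n),(x_n')$ with $x_n,x_n'\in X_n$, then $x_n+\lambda x_n'\in X_n$ approximates $x+\lambda x'$.
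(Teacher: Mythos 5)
Your proof is correct and is essentially the same diagonal/splicing argument as the paper's: both record, for each approximand $y^{(k)}$ (resp.\ each point of $X$ close to the limit), a threshold index beyond which its approximating sequence is within $\frac1k$, arrange the thresholds to be strictly increasing, and concatenate the blocks to obtain a single sequence $x_n\in X_n$ converging to the limit. The only cosmetic difference is that you keep the term $\Vert y^{(k)}-y\Vert$ explicit in the final estimate while the paper absorbs it beforehand by choosing the approximand within $\frac{1}{2(k+1)}$ of $z$; both are fine.
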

\begin{proof}
 Let $z\in\overline{X}$. In order to construct $(x_n)_{n\in\mathbb{N}}$ such that $x_n\in X_n$ and $\displaystyle\lim_{n\to\infty} x_n = z$, we first construct by induction, for any $k\in\mathbb{N}$,
 a sequence  $(y_n^k)_{n\in\mathbb{N}}$ and a number $n_k$ such that
 \begin{itemize}
 \item $y_n^k\in X_n$ for all $n\in\mathbb{N}$.
  \item $n_k < n_{k+1}$,
  \item $\Vert z - y_n^k\Vert_Z \le \frac 1 {k+1}$ for any $n\ge n_k$.
 \end{itemize}
Indeed, let $k=0$. Let $x\in X$ be such that $\Vert z - x\Vert_Z \le \frac 1 2$. Let $(y_n^0)_{n\in\mathbb{N}}$ be such that $y_n^0\in X_n$  for all $n\in\mathbb{N}$ and $\displaystyle\lim_{n\to\infty} y_n^0 = x$. We select $n_0\in\mathbb{N}$ such that, for any $n\ge n_0$, $\Vert x - y_n^0\Vert_Z \le \frac 1 2$. Then $\Vert z - y_n^0\Vert_Z \le 1$ for any $n\ge n_0$.

Assume that, for a given $k\ge 1$, the construction for $k-1$ is done. Let $x\in X$ be such that $\Vert z - x\Vert_Z \le \frac 1 {2(k+1)}$. Let $(y_n^k)_{n\in\mathbb{N}}$ be such that $y_n^k\in X_n$ and $\displaystyle\lim_{n\to\infty} y_n^k = x$. We select $n_k\in\mathbb{N}$ such that $n_k>n_{k-1}$ and, for any $n\ge n_k$, $\Vert x - y_n^k\Vert_Z \le \frac 1 {2(k+1)}$. Then $\Vert z - y_n^k\Vert_Z \le \frac 1 {k+1}$ for any $n\ge n_k$.
  This finishes the proof of the inductive statement.
  
 We now set $x_n = y_n^0$ for $n= 0,\ldots,n_0 - 1$, and, for any $k\ge 1$,  $x_n = y_n^k$ for any $n = n_k,\ldots,n_{k+1}-1$.
 We then obtain that $x_n\in X_n$ for any $n\in\mathbb{N}$ and that
 \[
   \Vert z - x_n\Vert_Z \le \frac 1 {k+1}\mbox{ for all }n\ge n_k ,
 \]
 which shows that $\displaystyle\lim_{n\to\infty} x_n = z$, and therefore that $z\in X$.
\end{proof}

The following notion is also used in our paper.
\begin{defn}[Limit of a sequence of subspaces for the weak topology]\label{def:limseqspaceweak}
 Let $Z$ be a Banach space and let $(X_n)_{n\in\mathbb{N}}$ be a sequence of subspaces of $Z$. We define the {\rm weak limit} of the sequence $(X_n)_{n\in\mathbb{N}}$ by
\begin{equation}\label{eq:defopad}
 \mathop{\rm w\mbox{-}lim}\limits_{n\to\infty} X_n := \{x\in Z:\mbox{ there exist } x_{n}\in X_{n}\hbox{ such that } x_{n}\rightharpoonup x\hbox{ as }n\to\infty\}.
\end{equation}
where $\rightharpoonup$ stands for ``converges for the weak topology of $Z$''.  
\end{defn}
Note that we always have
\[
 \displaystyle\lim_{n\to\infty} X_n\subset \mathop{\rm w\mbox{-}lim}\limits_{n\to\infty} X_n.
\]
The determination of $\displaystyle\lim\limits_{n\to\infty} X_n$ and $\mathop{\rm w\mbox{-}lim}\limits_{n\to\infty} X_n$ is not always easy (see Corollary \ref{cor:4.20}). The following examples show various situations.
\begin{exam} Let $Z = \mathbb{R}^2$, and for all $n\in\mathbb{N}$, $X_{2n} = \{0\}\times \mathbb{R}$ and  $X_{2n+1} = \mathbb{R}\times \{0\}$. Then $\displaystyle\lim\limits_{n\to\infty} X_n = \mathop{\rm w\mbox{-}lim}\limits_{n\to\infty} X_n =\{(0,0)\}$ (in finite dimension, the weak and the strong topology are identical).
\end{exam}

\begin{exam}\label{exam:defX} Let $Z = L^2((0,1))$. Let $n\in\mathbb{N}^\star$, let $h= 1/n$, and let $X_n$ be the set of all functions which are constant on $((i-1)h,ih)$ for all $i=1,\ldots,n$. Then $\displaystyle\lim_{n\to\infty} X_n = \mathop{\rm w\mbox{-}lim}\limits_{n\to\infty} X_n = Z$.
\end{exam}
\begin{exam}\label{exam:defY}  $Z = L^2((0,1))$. Let $n\in\mathbb{N}^\star$, let $h= 1/n$, let $X_n$ be the set of all functions which are constant on $((i-\frac 1 2)h,ih)$ and null on $((i-1)h, (i-\frac 1 2)h)$ for all $i=1,\ldots,n$. Then $\displaystyle\lim_{n\to\infty} X_n = \{0\}$ and $ {\mathop{\rm w\mbox{-}lim}\limits_{n\to\infty} X_n} = Z$.  

Indeed, let $(y_n)_{n\in\mathbb{N}}$ with $y_n\in X_n$ be a sequence which converges in $L^2((0,1))$ to some function $y\in L^2((0,1))$. We extend the functions  $y_n$ and $y$ to $\mathbb{R}$ by 0. Then  $y(\cdot -\frac 1 {2n})$ converges to $y$ in $L^2$ as $n\to\infty$. We prove that $y_n(\cdot -\frac 1 {2n}) - y(\cdot -\frac 1 {2n})$ tends to $0$ in $L^2$,  using the change of variable $s\to s+\frac 1 {2n}$. We get that
$y_n(\cdot -\frac 1 {2n})$ converges to $y$ in $L^2$, and therefore $y_n y_n(\cdot -\frac 1 {2n})$ converges to $y^2$ in $L^1$. Observing that $y_n(s)y_n(s-\frac 1 {2n}) = 0$ for all $s\in\mathbb{R}$, we deduce that $y = 0$.

On the other hand, given  $y\in L^2((0,1))$, define $y_n\in X_n$ by the constant value $\frac 2 h\int_{(i-1)h}^{ih} y(s){\rm d}s$ on $((i-\frac 1 2)h,ih)$. Then $y_n\rightharpoonup y$ as $n\to\infty$, which shows that $ {\mathop{\rm w\mbox{-}lim}\limits_{n\to\infty} X_n} = Z$.
\end{exam}

\begin{lem}\label{exam:nonclosed} 
 On the space $Z = L^2((0,1))$, there exists a sequence $(X_n)_{n\in\mathbb{N}}$ of subspaces of $Z$ such that $\mathop{\rm w\mbox{-}lim}\limits_{n\to\infty} X_n$ is not closed.
\end{lem}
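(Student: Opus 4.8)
My plan is to exploit the fact, established in Lemma~\ref{lem:limFn}, that the strong limit is always closed: the failure of closedness must therefore be a purely weak phenomenon, produced by approximating sequences whose norms stay bounded but whose "useful" mass is diluted along directions that escape weakly. Concretely, since $Z=L^2((0,1))$ is a separable infinite-dimensional Hilbert space, I would fix an orthonormal basis and relabel it as two families $(u_k)_{k\ge 1}$ and $(f_{k,m})_{k,m\ge 1}$. Setting $\mu_k=1/k$ and $\nu_k=\sqrt{1-\mu_k^2}$, I define the unit vectors $g_k^{(n)}=\mu_k u_k+\nu_k f_{k,n}$ (which are mutually orthogonal for fixed $n$, since they involve distinct $u_k$ and distinct $f_{k,n}$), and put $X_n=\mathrm{span}\{g_k^{(n)}:1\le k\le n\}$. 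The intuition is that each generator carries only a \emph{small, fixed} component $\mu_k$ along $u_k$ and a \emph{large} component along the direction $f_{k,n}$, which tends weakly to $0$ as $n\to\infty$. Hence in any weak limit the $f_{k,n}$-parts disappear, and to produce a prescribed coefficient $y_k=\langle y,u_k\rangle$ one is forced to use a coefficient of size $\approx y_k/\mu_k$, so that boundedness of the approximating sequence imposes $\sum_k y_k^2/\mu_k^2<\infty$.

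The core of the argument is to identify the weak limit $W$ of $(X_n)_{n\in\N}$ exactly, via two inclusions. For necessity, I would start from $y\in W$, i.e.\ $x_n\in X_n$ with $x_n\rightharpoonup y$; such a sequence is norm-bounded. Testing against the fixed vectors $f_{k,m}$ gives $\langle y,f_{k,m}\rangle=\lim_n\langle x_n,f_{k,m}\rangle=0$, so $y$ lies in the closed span of the $u_k$; testing against $u_k$ and writing $x_n=\sum_{j\le n}\xi_j^{(n)}g_j^{(n)}$ gives $\mu_k\xi_k^{(n)}=\langle x_n,u_k\rangle\to\langle y,u_k\rangle$, whence $\xi_k^{(n)}\to\langle y,u_k\rangle/\mu_k$. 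Combining $\|x_n\|^2=\sum_{k\le n}(\xi_k^{(n)})^2\le M$ with this pointwise convergence (first over a fixed finite range $k\le N$, then letting $N\to\infty$) yields $\sum_k\langle y,u_k\rangle^2/\mu_k^2\le M<\infty$. For sufficiency, given $y=\sum_k y_k u_k$ with $\sum_k y_k^2/\mu_k^2<\infty$, I would take the truncated sequence $x_n=\sum_{k\le n}(y_k/\mu_k)\,g_k^{(n)}$, which is bounded by the same sum; weak convergence $x_n\rightharpoonup y$ then follows because, for any test vector $z$, the diagonal/escaping contribution is bounded by $M^{1/2}\big(\sum_k|\langle z,f_{k,n}\rangle|^2\big)^{1/2}$, and $\sum_k|\langle z,f_{k,n}\rangle|^2\to 0$ as $n\to\infty$ since it is the $n$-th term of the convergent series $\sum_{k,m}|\langle z,f_{k,m}\rangle|^2$.

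These two steps identify $W=\{\sum_k y_k u_k:\sum_k k^2 y_k^2<\infty\}$, and I would finish by exhibiting an element of $\overline{W}\setminus W$: the vector $y^\star=\sum_k \tfrac1k u_k$ belongs to $Z$ (as $\sum_k k^{-2}<\infty$) and is the norm-limit of its finitely supported truncations, which lie in $W$, yet $y^\star\notin W$ because $\sum_k k^2\cdot k^{-2}=\sum_k 1=\infty$; hence $W$ is not closed. The step I expect to be the main obstacle is pinning down $W$ precisely, and in particular realizing \emph{why} one independent escaping direction $f_{k,n}$ per index $k$ is essential: if all generators shared a single escaping direction, the $e_k$-like coefficients could be corrected "for free" by spreading a small $\ell^2$-perturbation over many high modes, which would force the weak limit to be a closed subspace (indeed all of $\overline{\mathrm{span}}(u_k)$). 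Using one orthogonal escaping direction per mode locks the per-mode ratio $\langle y,u_k\rangle=\mu_k\xi_k$ while loading the unbounded weights $1/\mu_k^2=k^2$ into the norm cost, so that $W$ is exactly a graph-norm domain --- dense in $\overline{\mathrm{span}}(u_k)$ but not closed.
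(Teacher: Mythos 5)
Your proof is correct, but it follows a genuinely different route from the paper's. The paper works with an explicit sequence of spaces of step functions supported on shrinking subintervals $(\frac{i-u_{n,i}}{n},\frac{i}{n})$ with $u_{n,i}=\frac14(i/n)^4$, shows by direct integral estimates that each truncated indicator $y^m=\mathbf{1}_{(1/m,1)}$ lies in the weak limit, and then shows by a Cauchy--Schwarz argument that their norm limit $y^\infty\equiv 1$ does not; the weak limit itself is never identified. You instead use only the abstract Hilbert-space structure of $L^2((0,1))$: splitting an orthonormal basis into $(u_k)$ and escaping directions $(f_{k,n})$, taking $X_n=\mathrm{span}\{\mu_k u_k+\nu_k f_{k,n}:k\le n\}$ with $\mu_k=1/k$, and computing the weak limit \emph{exactly} as the graph-norm domain $\{\sum_k y_k u_k:\sum_k k^2y_k^2<\infty\}$, which is dense in $\overline{\mathrm{span}}(u_k)$ but proper, hence not closed. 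I checked the two inclusions: necessity follows from $\langle x_n,u_k\rangle=\mu_k\xi_k^{(n)}$ together with the uniform bound $\sum_{k\le n}(\xi_k^{(n)})^2\le M$ (first over finite ranges, then passing to the supremum), and from $\langle x_n,f_{k,m}\rangle=0$ for $n\neq m$; sufficiency follows from Bessel's inequality applied to the orthonormal family $(f_{k,m})_{k,m}$, which forces $\sum_k|\langle z,f_{k,n}\rangle|^2\to 0$. Both steps are sound. What your approach buys is a complete and transparent description of the weak limit and a construction that works verbatim in any separable infinite-dimensional Hilbert space; what the paper's buys is an example living among the piecewise-constant spaces already used in its Examples \ref{exam:defX} and \ref{exam:defY}, i.e.\ closer to the discretization spaces that motivate the whole section. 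The only cosmetic point is to make sure $X_n$ is defined for every $n\in\mathbb{N}$ as required by Definition \ref{def:limseqspaceweak} (e.g.\ set $X_0=\{0\}$), which is immaterial.
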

\begin{proof} Let $n\in\mathbb{N}^\star$, and let $X_n$ be the set of all functions which are null on $({\frac {i-1} n}, {\frac {i-u_{n,i}} n})$ and constant on $({\frac {i-u_{n,i}} n},{\frac i n})$, where $u_{n,i} = \frac 1 4 ({\frac i n})^4$, for all $i=1,\ldots,n$.
	Define, for a given $m\in\mathbb{N}$, the function $y^m:(0,1)\to\{0,1\}$ by the value $0$ on $(0,\frac 1 m)$ and $1$ on $(\frac 1 m,1)$. 
	
	We show that $y^m\in \mathop{\rm w\mbox{-}lim}\limits_{n\to\infty} X_n$.  Define $(y_n)_{n\in\mathbb{N}}$ by the constant value $0$ on $({\frac {i-u_{n,i}} n},{\frac i n})$ if $i<i_n$ and by the constant value $1/u_{n,i}$ on $({\frac {i-u_{n,i}} n},{\frac i n})$ if $i_n\le i\le n$ where ${\frac {i_n-1} n} < \frac 1 m\le {\frac {i_n} n}$. Then
\[
 \Vert y_n\Vert_{L^2}^2 = \sum_{i=i_n}^n\frac 1 {n u_{n,i}} \le 4 m^4.
\]
Owing to this bound in $L^2$, it is possible by density of $C^1_c((0,1))$ in $L^2((0,1))$ to prove $y_n\rightharpoonup y^m$ by showing that, for any $\varphi\in C^1_c((0,1))$, $\int_0^1\varphi(s)y_n(s){\rm d}s\to \int_{\frac 1 m}^1\varphi(s){\rm d}s$ as $n\to\infty$. Let $M_0 = \Vert \varphi\Vert_\infty$ and $M_1 = \Vert \varphi'\Vert_\infty$. We have
\[
 \Big\vert  n \int_{\frac {i-1} n}^{\frac {i} n}\varphi(s){\rm d}s - \frac n {u_{n,i}}\int_{\frac {i-u_{n,i}} n}^{\frac {i} n}\varphi(s){\rm d}s\Big\vert =\vert \varphi(s_1) - \varphi(s_2)\vert \le {M_1} |s_1 - s_2| \le \frac {M_1} n,
\]
where $s_1\in ({\frac {i-1} n}, {\frac {i} n})$ and $s_2\in ( {\frac {i-u_{n,i}} n},{\frac {i} n})$. Hence,
noticing that
\[
 \int_0^1\varphi(s)y_n(s){\rm d}s = \sum_{i=i_n}^n \frac 1 {u_{n,i}}\int_{\frac {i-u_{n,i}} n}^{\frac {i} n}\varphi(s){\rm d}s
\]
and
\[
  \int_{\frac 1 m}^1\varphi(s){\rm d}s =  -\int_{\frac {i_n-1} n}^{\frac 1 m}\varphi(s){\rm d}s+ \sum_{i=i_n}^n \int_{\frac {i-1} n}^{\frac {i} n}\varphi(s){\rm d}s,
\]
we can write
\[
 \Big\vert \int_0^1\varphi(s)y_n(s){\rm d}s - \int_{\frac 1 m}^1\varphi(s){\rm d}s\Big\vert \le \frac {M_0 + M_1} n.
\]

We deduce that $y_n\rightharpoonup y^m$ as $n\to\infty$, which shows that $y^m\in \mathop{\rm w\mbox{-}lim}\limits_{n\to\infty} X_n$.

We now observe that, as $m\to\infty$, $y^m$ converges in $Z$ to the constant function equal to $1$ on $(0,1)$, denoted by $y^\infty$. Let us prove that $y^\infty\notin \mathop{\rm w\mbox{-}lim}\limits_{n\to\infty} X_n$. Let us assume the contrary. Let $(y_n)_{n\in\mathbb{N}}$ with $y_n\in X_n$  such that $y_n\rightharpoonup y^\infty$ as $n\to\infty$. We denote by $y_{n,i}$ the constant value of $y_n$ on $({\frac {i-u_{n,i}} n},{\frac i n})$. Then 
\[
 M := \sup_{n\ge 1}\Vert y_n\Vert_{L^2} = \sup_{n\ge 1}\Big(\sum_{i=1}^{n}\frac {y_{n,i}^2 u_{n,i}} {n}\Big)^{1/2}<\infty .
\]
Let $0< a <\min(\frac 2 M,1)$. Define  $\varphi_a$ by $\varphi_a(s) = 1$ for $s\le a$ and $0$ for $s>a$. Then, choosing $i_n\in\{1,\ldots,n\}$ such that $\frac {i_n-1} n\le a< \frac {i_n} n$ and using the Cauchy-Schwarz inequality, we obtain 
\[
 \int_0^1 \varphi_a(s)y_n(s) {\rm d}s \le \sum_{i=1}^{i_n}\frac {|y_{n,i}| u_{n,i}} {n} \le \Big( \sum_{i=1}^{i_n}\frac {y_{n,i}^2 u_{n,i}} {n} \Big)^{1/2}\Big( \sum_{i=1}^{i_n}\frac {u_{n,i}} {n} \Big)^{1/2}.
\]
This gives
\[
  \int_0^1 \varphi_a(s)y_n(s) {\rm d}s \le M \Big(u_{n,i_n}\Big)^{1/2} = \frac M 2 (\frac {i_n} n)^2.
\]
Letting $n\to\infty$, we get
\[
 \int_0^1 \varphi_a(s)y^\infty(s) {\rm d}s = a \le \frac M 2 a^2.
\]
This contradicts  $a<\frac 2 M$.
\end{proof}

\subsection{A nonconforming approximation method}\label{sec:nonconfap}

Let $Z,Y$ be Banach spaces and let $b~:~Z\times Y\to \mathbb{R}$ be a continuous bilinear form, in the sense that there exists a constant $M>0$ such that
\begin{equation}\label{eq:bcontinuous}
|b(z,y)|\le M\Vert z\Vert_Z\Vert y\Vert_Y,\hbox{ for all }z\in Z,\, y\in Y. 
\end{equation}
Let $(X_n)_{n\in\mathbb{N}}$ and $(Y_n)_{n\in\mathbb{N}}$ be sequences of finite dimensional subspaces of $Z$ and $Y$, respectively, such that  $X_n$ and $Y_n$ have the same dimension for all  $n\in\mathbb{N}$. We assume that the  Banach-Ne\v cas-Babu\v ska condition (abbreviated by  \textit{BNB-condition}) is satisfied. This means that there exists $\beta>0$ such that
 \begin{equation}\label{eq:betabnbn}
 \sup_{y\in Y_n,\Vert y\Vert_Y = 1}b(x,y)\ge \beta\Vert x\Vert_Z,\hbox{ for all }x\in X_n\mbox{ for all }n\in\mathbb{N}.
 \end{equation}
  We deduce from \eqref{eq:betabnbn} an inf-sup condition on $\displaystyle\lim_{n\to\infty} X_n$.
 \begin{lem}\label{lem:infsup} Let $X = \displaystyle\lim_{n\to\infty} X_n$. 
Then
  \begin{equation}\label{eq:betabnb}
  \sup_{y\in Y,\Vert y\Vert_Y = 1}b(x,y)\ge \beta\Vert x\Vert_Z,\hbox{ for all }x\in X.
 \end{equation}
 \end{lem}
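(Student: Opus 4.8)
The plan is to transport the discrete inf-sup bound \eqref{eq:betabnbn} to the limit space by an approximation argument, the whole point being that the test vectors produced at level $n$ remain admissible competitors for the supremum over all of $Y$. Fix $x\in X=\displaystyle\lim_{n\to\infty}X_n$. By Definition \ref{def:limseqspace} there is a sequence $(x_n)_{n\in\mathbb{N}}$ with $x_n\in X_n$ and $x_n\to x$ in $Z$. For each $n$, I apply \eqref{eq:betabnbn} to $x_n\in X_n$: since $Y_n$ is finite dimensional, its unit sphere $\{y\in Y_n:\Vert y\Vert_Y=1\}$ is compact and $y\mapsto b(x_n,y)$ is continuous, so the supremum defining the BNB condition is attained at some $y_n\in Y_n$ with $\Vert y_n\Vert_Y=1$ and $b(x_n,y_n)\ge\beta\Vert x_n\Vert_Z$.

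Next I would test the full bilinear form $b(x,\cdot)$ against these $y_n$. They are admissible in the supremum over $Y$ because $y_n\in Y_n\subset Y$ and $\Vert y_n\Vert_Y=1$. Splitting $b(x,y_n)=b(x_n,y_n)+b(x-x_n,y_n)$ and using continuity \eqref{eq:bcontinuous} to estimate $|b(x-x_n,y_n)|\le M\Vert x-x_n\Vert_Z$, I obtain, for every $n$,
\[
\sup_{y\in Y,\Vert y\Vert_Y=1}b(x,y)\ge b(x,y_n)\ge \beta\Vert x_n\Vert_Z - M\Vert x-x_n\Vert_Z.
\]
The left-hand side does not depend on $n$. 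Letting $n\to\infty$ and using $\Vert x_n\Vert_Z\to\Vert x\Vert_Z$ together with $\Vert x-x_n\Vert_Z\to 0$, the right-hand side converges to $\beta\Vert x\Vert_Z$, which is exactly \eqref{eq:betabnb}.

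There is no serious obstacle here: the single error term $b(x-x_n,y_n)$, controlled uniformly by continuity, is all that separates the discrete inequality from the limiting one. The only points deserving a word of care are the attainment of the supremum (guaranteed by finite-dimensionality of $Y_n$, so that a norming $y_n$ exists whenever $Y_n\neq\{0\}$) and the trivial edge case $x=0$ — equivalently $Y_n=\{0\}$ forcing $X_n=\{0\}$ since $\dim X_n=\dim Y_n$ — where \eqref{eq:betabnb} holds with both sides equal to $0$. Note that when $x\neq 0$ we have $x_n\neq 0$ for all large $n$, hence $Y_n\neq\{0\}$, so the normalized test vector $y_n$ is available precisely when it is needed.
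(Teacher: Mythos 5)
Your proposal is correct and follows essentially the same argument as the paper: choose $x_n\in X_n$ converging to $x$, pick a normalized $y_n\in Y_n$ realizing the discrete BNB bound, split $b(x,y_n)=b(x-x_n,y_n)+b(x_n,y_n)$, control the error term by continuity, and pass to the limit. The additional remarks on attainment of the supremum and the degenerate case $Y_n=\{0\}$ are harmless refinements of the same proof.
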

\begin{proof}
 Let $x\in X$ and let  $(x_n)_{n\in\mathbb{N}}$ be such that $x_n\in X_n$ and $\displaystyle\lim_{n\to\infty} x_n = x$ in $Z$. For any $n\in\mathbb{N}$, choose $y_n\in Y_n$ such that
 $\Vert y_n\Vert_Y = 1$ and $b(x_n,y_n)\ge \beta\Vert x_n\Vert_Z$. Then
 \[
  \sup_{y\in Y,\Vert y\Vert_Y = 1} b(x,y) \ge b(x,y_n) = b(x-x_n,y_n)+b(x_n,y_n)\ge -M\Vert x-x_n\Vert_Z + \beta\Vert x_n\Vert_Z.
 \]
Letting $n\to\infty$ in the preceding inequality yields \eqref{eq:betabnb}.
\end{proof}
Let $L\in Y'$. Then, owing to \eqref{eq:betabnbn}, there exists one and only one $x_n\in X_n$ such that
\begin{equation}\label{eq:pbinXn}
 x_n\in X_n, \ b(x_n,y) = L(y) \hbox{ for all }y\in Y_n.
\end{equation}
We now study the approximation of a solution $\widehat{x}$ of the following problem:
\begin{equation}\label{eq:pbinZ}
 \widehat{x}\in Z, \ b(\widehat{x},y) = L(y) \hbox{ for all }y\in Y.
\end{equation}
Theorem~\ref{thm:bnbnonconf} below provides sufficient conditions for the existence of such a solution.

Concerning approximations, we have the following result, which proves that, up to a constant, the discrete solutions converge with optimal speed.
\begin{lem}\label{lem:errest}
Let $L\in Y'$ and let  $\widehat{x}\in Z$ such that \eqref{eq:pbinZ} holds. For any $n\ge 0$, let $x_n$ be given by \eqref{eq:pbinXn}. Then we have
 \begin{equation}\label{eq:errestchixbanach}
\Vert x_n - \widehat{x}\Vert_Z\le (1+\frac {M} {\beta})\min_{x\in X_n}\Vert x - \widehat{x}\Vert_Z,
 \end{equation}
 \begin{equation}\label{eq:xxlul}
\Vert x_n - x\Vert_Z\le \frac{M}{\beta}\Vert x - \widehat{x}\Vert_Z\mbox{ for all }x\in X_n 
 \end{equation}
and, if $Z$ is a Hilbert space, 
 \begin{equation}\label{eq:errestchixhilbert}
\Vert x_n - \widehat{x}\Vert_Z\le \frac {M} {\beta}\min_{x\in X_n}\Vert x - \widehat{x}\Vert_Z.
 \end{equation}
\end{lem}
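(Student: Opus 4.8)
The plan is to exploit a Galerkin-orthogonality relation between $x_n$ and $\widehat{x}$ and then feed it into the discrete inf-sup condition \eqref{eq:betabnbn}. First I would record that, since $\widehat{x}$ satisfies \eqref{eq:pbinZ} and $Y_n\subset Y$, we have $b(\widehat{x},y)=L(y)$ for every $y\in Y_n$; comparing this with \eqref{eq:pbinXn} gives the orthogonality relation
\begin{equation*}
 b(x_n-\widehat{x},y)=0\quad\hbox{for all }y\in Y_n.
\end{equation*}

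Fix $x\in X_n$. Since $x_n-x\in X_n$, the BNB condition \eqref{eq:betabnbn} yields $\beta\Vert x_n-x\Vert_Z\le\sup_{y\in Y_n,\Vert y\Vert_Y=1}b(x_n-x,y)$. For such $y$ the orthogonality relation lets me rewrite $b(x_n-x,y)=b(\widehat{x}-x,y)$, and the continuity \eqref{eq:bcontinuous} bounds this by $M\Vert\widehat{x}-x\Vert_Z$; this is exactly \eqref{eq:xxlul}. Estimate \eqref{eq:errestchixbanach} then follows from the triangle inequality $\Vert x_n-\widehat{x}\Vert_Z\le\Vert x_n-x\Vert_Z+\Vert x-\widehat{x}\Vert_Z$ and minimizing over $x\in X_n$.

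The refined Hilbert-space estimate \eqref{eq:errestchixhilbert} is the delicate point, since the triangle-inequality argument only produces the constant $1+M/\beta$ (and a Pythagorean argument only $\sqrt{1+M^2/\beta^2}$), not $M/\beta$. Here I would introduce the linear solution operator $Q_n:Z\to X_n$, $\widehat{x}\mapsto x_n$, which is well defined by \eqref{eq:betabnbn}. It is a projection onto $X_n$: if $z\in X_n$, then $z$ itself solves \eqref{eq:pbinXn} with $L=b(z,\cdot)$, whence $Q_nz=z$. Applying \eqref{eq:betabnbn} to $x_n=Q_n\widehat{x}$ together with $b(x_n,y)=b(\widehat{x},y)$ on $Y_n$ gives $\beta\Vert Q_n\widehat{x}\Vert_Z\le M\Vert\widehat{x}\Vert_Z$, hence $\Vert Q_n\Vert_{\mathcal{L}(Z)}\le M/\beta$. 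Because $(\Id-Q_n)x=0$ for $x\in X_n$, I can write $x_n-\widehat{x}=(Q_n-\Id)(\widehat{x}-x)$ for every $x\in X_n$, so that $\Vert x_n-\widehat{x}\Vert_Z\le\Vert\Id-Q_n\Vert_{\mathcal{L}(Z)}\min_{x\in X_n}\Vert\widehat{x}-x\Vert_Z$.

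The crux is therefore the identity $\Vert\Id-Q_n\Vert_{\mathcal{L}(Z)}=\Vert Q_n\Vert_{\mathcal{L}(Z)}$, valid for any bounded projection $Q_n$ on a Hilbert space with $Q_n\ne 0,\Id$ (Kato's lemma): it upgrades the bound on $\Vert Q_n\Vert$ into the same bound on $\Vert\Id-Q_n\Vert$ and yields the constant $M/\beta$. I expect this Hilbert-space fact about projection norms to be the main obstacle, as it is precisely what the crude Banach estimate cannot deliver; note that $M/\beta\ge 1$ automatically, since \eqref{eq:bcontinuous} and \eqref{eq:betabnbn} force $\beta\le M$ on any nonzero element of $X_n$. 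The degenerate cases $Q_n=0$ (that is, $X_n=\{0\}$) and $Q_n=\Id$ (that is, $X_n=Z$) are trivial and would be handled separately.
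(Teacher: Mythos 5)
Your proposal is correct and follows essentially the same route as the paper: both hinge on the projection operator $Q_n$ with $\Vert Q_n\Vert\le M/\beta$, the identity $\widehat{x}-x_n=(\mathrm{Id}-Q_n)(\widehat{x}-x)$, and Kato's lemma $\Vert \mathrm{Id}-Q_n\Vert=\Vert Q_n\Vert$ for the Hilbert-space refinement, with the degenerate cases $X_n=\{0\}$ and $X_n=Z$ set aside. The only (immaterial) difference is that you obtain \eqref{eq:xxlul} and \eqref{eq:errestchixbanach} directly from Galerkin orthogonality and the triangle inequality, whereas the paper reads both off from the bound on $\Vert Q_n\Vert$.
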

\begin{proof}
For any $n\in\mathbb{N}$, define $Q_n:Z\to X_n$   by 
\[
 b(Q_n z , y) = b(z,y), \mbox{ for all } y\in Y_n\mbox { and all }z\in Z.
\]
Then $Q_n\in \mathcal{L}(Z)$ is such that $Q_n^2 = Q_n$. Moreover, 
\[
  \beta \Vert Q_n z\Vert_Z \le  \sup_{y\in Y_n,\Vert y\Vert_Y = 1}b(z,y) \le M \Vert z\Vert_Z.
\]
This shows that
\begin{equation}\label{eq:normqn}
 \Vert Q_n\Vert \le \frac {M} {\beta}.
\end{equation}
Let now $L\in Y'$ be given and let $x_n$ be the solution to \eqref{eq:pbinXn}. We get that 
\[
 b(\widehat{x},y) = L(y) =b(x_n,y) \hbox{ for all }y\in Y_n.
\]
This proves that $Q_n \widehat{x} = x_n$. This first implies \eqref{eq:xxlul} owing to \eqref{eq:normqn}, since, for any $x\in X_n$,
\[
 x - x_n = Q_n(x-x_n) = Q_n(x-\widehat{x}).
\]
This also yields that
\begin{equation}\label{eq:galorth}
 \widehat{x} - x_n = \widehat{x} - x - Q_n(\widehat{x}  - x) = ({\rm Id} - Q_n)(\widehat{x}  - x),
\end{equation}
which proves \eqref{eq:errestchixbanach}  again by \eqref{eq:normqn}. 

Now we use an argument due to Xu and Zikatanov \cite{XZ03}, see also \cite[Theorem~9.42]{AU23}.
If $X_n \ne \{0\}$ and $X_n=Z$, by a result of Kato \cite[Lemma 4]{kato1060est}  (see also  \cite[Lemma 9.41]{AU23},
\[
 \Vert {\rm Id} - Q_n\Vert =\Vert Q_n\Vert \le \frac {M} {\beta}.
\]
Using \eqref{eq:galorth}, for any $x\in X_n$, we get
\[
 \Vert  \widehat{x} - x_n\Vert_Z\le \frac {M} {\beta} \Vert x - \widehat{x}\Vert_Z,
\]
which concludes the proof of \eqref{eq:errestchixhilbert}.  In the case $X_n = \{0\}$ or $X_n=Z$, the estimate \eqref{eq:errestchixhilbert} is trivial.
\end{proof}

Under an additional hypothesis on $X$ we now show that the solutions of the approximate problems converge to a solution of \eqref{eq:pbinZ}. Note that this solution $\widehat{x}$ becomes unique since we require $\widehat{x}\in X$ (and not merely $\widehat{x}\in Z$). Theorem~\ref{thm:bnbnonconf} will be used later to prove convergence of the Euler-time scheme
 (see Theorem~\ref{thm:cv}), and also convergence of the discontinuous Galerkin scheme in Section~\ref{sec:dg}.  It is a \textit{non-conforming convergence result} since the $X_n$ are not subspaces of $X$.

\begin{thm}\label{thm:bnbnonconf}
Let $Z,Y$ be Banach spaces with $Y$ reflexive and let $b:Z\times Y\to \mathbb{R}$ be a continuous bilinear form.  
Let $(X_n)_{n\in\mathbb{N}}$ and $(Y_n)_{n\in\mathbb{N}}$ be sequences of finite dimensional subspaces of $Z$ and $Y$, respectively, such that  $X_n$ and $Y_n$ have the same dimension for any $n\in\mathbb{N}$. We assume that there exists $\beta>0$ such that \eqref{eq:betabnbn} holds. Let $X_\infty =   \displaystyle\lim_{n\to\infty} X_n$. We assume that there exists a closed subspace $X$ of $X_\infty$ such that, given $y\in Y$, 
\begin{equation}\label{eq:denserange}
( b(x,y)=0\mbox{ for all }x\in X)\mbox{ implies } y=0.
\end{equation}
Then one has $X = X_\infty$. Moreover, for any $L\in Y'$, there exists one and only one solution $\widehat{x}\in X$ of \eqref{eq:pbinZ}. In addition, letting $x_n\in X_n$ be the solution of  \eqref{eq:pbinXn}, then $x_n$ converges to $\widehat{x}$ in $Z$.

Finally the identity $Y = {\mathop{\rm w\mbox{-}lim}\limits_{n\to\infty} Y_n}$ holds.
\end{thm}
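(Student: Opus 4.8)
The plan is to establish the four assertions in the order in which they become available: first the invertibility of $B$ on $X$, which yields existence and uniqueness of $\widehat x$; then the identity $X=X_\infty$; then the convergence $x_n\to\widehat x$; and finally $Y=\mathop{\rm w\mbox{-}lim}\limits_{n\to\infty} Y_n$.

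First I would check that $X$ is a Banach space: by Lemma~\ref{lem:limFn} the space $X_\infty$ is closed in $Z$, and $X$ is closed in $X_\infty$, hence closed in the Banach space $Z$. Restricting the inf-sup inequality \eqref{eq:betabnb} of Lemma~\ref{lem:infsup} from $X_\infty$ to the subspace $X$, the continuous bilinear form $b$ on $X\times Y$ satisfies the inf-sup condition, while \eqref{eq:denserange} is precisely the dual uniqueness condition \eqref{eq:2.3}. Since $Y$ is reflexive, the invertibility criterion recalled in Subsection~\ref{sec:infsup} shows that $B:X\to Y'$, $x\mapsto b(x,\cdot)$, is invertible. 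This immediately provides, for each $L\in Y'$, a unique $\widehat x\in X$ with $b(\widehat x,y)=L(y)$ for all $y\in Y$, i.e.\ a unique solution of \eqref{eq:pbinZ} lying in $X$.

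Next, to prove $X=X_\infty$, I would pick an arbitrary $x_0\in X_\infty$. Since $b(x_0,\cdot)\in Y'$ and $B$ is onto $Y'$, there is $\widetilde x\in X$ with $b(\widetilde x,y)=b(x_0,y)$ for all $y\in Y$; then $x_0-\widetilde x\in X_\infty$ and $b(x_0-\widetilde x,y)=0$ for all $y\in Y$, so the inf-sup inequality \eqref{eq:betabnb} on $X_\infty$ forces $x_0=\widetilde x\in X$. Together with $X\subseteq X_\infty$ this gives $X=X_\infty$. The convergence $x_n\to\widehat x$ then follows from the quasi-optimal estimate \eqref{eq:errestchixbanach} of Lemma~\ref{lem:errest}: since $\widehat x\in X=X_\infty=\lim_{n\to\infty} X_n$, Definition~\ref{def:limseqspace} provides $\xi_n\in X_n$ with $\xi_n\to\widehat x$, whence $\min_{x\in X_n}\Vert x-\widehat x\Vert_Z\le\Vert\xi_n-\widehat x\Vert_Z\to 0$.

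For the last identity, the inclusion $\mathop{\rm w\mbox{-}lim}\limits_{n\to\infty} Y_n\subseteq Y$ is built into Definition~\ref{def:limseqspaceweak}, so only $Y\subseteq\mathop{\rm w\mbox{-}lim}\limits_{n\to\infty} Y_n$ remains. Fixing $y\in Y$, I would introduce a Galerkin projection in the test variable: because $X_n$ and $Y_n$ share the same finite dimension, Corollary~\ref{cor:bnbstar} converts \eqref{eq:betabnbn} into a dual inf-sup inequality on $Y_n$, which yields a unique $y_n\in Y_n$ with $b(x,y_n)=b(x,y)$ for all $x\in X_n$. That same dual inequality gives $\beta\Vert y_n\Vert_Y\le\sup_{x\in X_n,\Vert x\Vert_Z=1}b(x,y)\le M\Vert y\Vert_Y$, so $(y_n)$ is bounded. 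By reflexivity of $Y$, any subsequence admits a weakly convergent sub-subsequence $y_{n_k}\rightharpoonup\widetilde y$; for $x\in X=X_\infty$ and $x_{n_k}\in X_{n_k}$ with $x_{n_k}\to x$, splitting $b(x_{n_k},y_{n_k})-b(x,\widetilde y)=b(x_{n_k}-x,y_{n_k})+b(x,y_{n_k}-\widetilde y)$ and using the uniform bound on $(y_{n_k})$ together with $b(x,\cdot)\in Y'$ shows $b(x_{n_k},y_{n_k})\to b(x,\widetilde y)$, while $b(x_{n_k},y_{n_k})=b(x_{n_k},y)\to b(x,y)$. Hence $b(x,\widetilde y-y)=0$ for all $x\in X$, so \eqref{eq:denserange} gives $\widetilde y=y$. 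As every weakly convergent subsequence of the bounded sequence $(y_n)$ has limit $y$, the Eberlein--\v{S}mulian theorem yields $y_n\rightharpoonup y$, so $y\in\mathop{\rm w\mbox{-}lim}\limits_{n\to\infty} Y_n$.

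I expect the fourth identity to be the main obstacle: the first three conclusions are essentially bookkeeping built on the invertibility criterion and the error estimate, whereas here one must manufacture the approximating sequence $y_n$ from the dual inf-sup condition, secure a uniform bound, and then combine strong convergence in the trial variable with weak convergence in the test variable. The delicate point is the mixed strong/weak passage $b(x_{n_k},y_{n_k})\to b(x,\widetilde y)$, which relies crucially on the uniform bound on $(y_n)$ and on $b(x,\cdot)$ being a fixed element of $Y'$.
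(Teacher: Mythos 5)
Your proposal is correct and follows essentially the same route as the paper's own proof: invertibility of $B:X\to Y'$ via the inf-sup inequality inherited from $X_\infty$ plus the dual uniqueness condition, the surjectivity argument to show $X=X_\infty$, quasi-optimality from Lemma~\ref{lem:errest} for the convergence $x_n\to\widehat x$, and the Galerkin projection in the test variable combined with Corollary~\ref{cor:bnbstar} and reflexivity for $Y=\mathop{\rm w\mbox{-}lim}_{n\to\infty}Y_n$. The only (harmless) difference is that you spell out the subsequence-of-subsequences argument at the end, where the paper simply invokes uniqueness of the weak limit.
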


\begin{proof}
 
Let $B:X \to Y'$ be defined by $ \langle Bx,y\rangle_{Y',Y} = b(x,y)$ for all $(x,y)\in X\times Y$. Since $b$ is continuous, $B$ is continuous as well. 
Since $X\subset X_\infty$,  \eqref{eq:betabnb} in Lemma \ref{lem:infsup} implies that  
\[
 \Vert B x\Vert_{Y'} \ge \beta \Vert x\Vert_Z\hbox{ for all }x\in X.
\]
This property means that $B$ is injective and has a closed range. Since $Y$ is reflexive,  \eqref{eq:denserange} implies that the range of $B$ is dense  in $Y'$. Hence $B$ is bijective, which proves the existence and uniqueness of $\widehat{x}\in X$ such that \eqref{eq:pbinZ}  holds. 

Let $x\in  X_\infty$. For $L\in Y'$ be defined by $L(y) = b(x,y)$ for all $y\in Y$, let  $\widehat{x}\in X$ be the solution of \eqref{eq:pbinZ}.  Then
\[
 b(\widehat{x}-x,y) = 0\mbox{ for all }y\in Y.
\]
Since $\widehat{x}-x\in X_\infty$, we deduce from \eqref{eq:betabnb} that $\widehat{x}-x = 0$. Hence $x = \widehat{x}\in X$, which proves that $X_\infty= X$.

\medskip

The convergence of $x_n$ to $\widehat{x}$ results from Lemma \ref{lem:errest}, since $\displaystyle\lim_{n\to\infty}X_n=X$. In fact,  Lemma~\ref{lem:errest} shows even that the convergence is at an optimal rate.  

Let us finally prove that $Y = {\mathop{\rm w\mbox{-}lim}\limits_{n\to\infty} Y_n}$ (see Definition \ref{def:limseqspaceweak}). 
From Corollary \ref{cor:bnbstar} we deduce that the dual BNB inequality holds, with the same $\beta$ as in \eqref{eq:betabnbn}. 
Let $y\in Y$ be given. For all $n\in\mathbb{N}$, let $y_n\in Y_n$ be defined by
\[
  b(x,y_n) = b(x,y)\mbox{ for all }x\in X_n.
\]
Using the preceding relation, we then get
 \begin{equation}\label{eq:bnbstar}
   \beta \|y_n\|_Y\leq \sup_{x\in X_n,\|x\|_Z=1}|b(x,y_n)|= \sup_{x\in X_n,\|x\|_Z=1}|b(x,y)|.
\end{equation}
This leads to
\[
 \beta \Vert y_n\Vert_Y \le M \Vert y\Vert_Y.
\]
Hence the sequence $(\Vert y_n\Vert_Y)_{n\in\mathbb{N}}$ is bounded. Since $Y$ is reflexive, there exists a subsequence $(y_{n_k})_{k\in\mathbb{N}}$ which converges to some $\widehat{y}\in Y$ for the weak topology of $Y$.
Let $x\in X$, and let $(x_n)_{n\in\mathbb{N}}$ with $x_n\in X_n$ converging to $x$ in $Z$. Then we have, for all $k\in\mathbb{N}$,
\[
 b(x_{n_k},y_{n_k}) =  b(x_{n_k},y).
\]
Passing to the limit $k\to\infty$ in the above relation implies $b(x,\widehat{y}) = b(x,y)$.
We then have
\[
 b(x,\widehat{y}-y) = 0\mbox{ for all }x\in X.
\]
By Hypothesis \eqref{eq:denserange}, this implies $\widehat{y}=y$. By uniqueness of the limit, the whole sequence $(y_n)_{n\in\mathbb{N}}$ converges to $y$ for the weak topology of $Y$, which concludes the proof.
\end{proof}
The proof of Theorem~\ref{thm:bnbnonconf} shows that also $X$ is necessarily reflexive. In fact, since $Y$ is supposed to be reflexive, also $Y'$ is reflexive and $B$ is an isomorphism between $X$ and $Y'$. However, in our applications in Section~\ref{sec:eulerex}   and \ref{sec:dg}, the space $Z$ will not be reflexive.    
\begin{exam} In general, one does not have $Y = \displaystyle\displaystyle\lim_{n\to\infty} Y_n$ under the hypotheses of Theorem \ref{thm:bnbnonconf}. Indeed, let $Z = Y = L^2((0,1))$, $b(z,y) = \int_0^1 z(s) y(s) {\rm d}s$. Let $(X_n)$ be defined by Example \ref{exam:defX} and $(Y_n)$ be defined by Example \ref{exam:defY}.  Then $X = \displaystyle\lim_{n\to\infty} X_n$, the BNB condition \eqref{eq:betabnbn} holds (one has $\beta = \sqrt{2}/2$: consider $x\in X_n$ defined by the values $x_i$, $i=1,\ldots,n$ on each interval $((i-1)h,ih)$, and $y\in Y_n$ defined by the values $x_i$, $i=1,\ldots,n$ on each interval $((i-\frac 1 2)h,ih)$) and the surjectivity condition \eqref{eq:denserange} holds as well. Then one has $\displaystyle\lim_{n\to\infty} Y_n = \{0\}$  whereas $Y = {\mathop{\rm w\mbox{-}lim}\limits_{n\to\infty} Y_n}$ as asserted by Theorem~\ref{thm:bnbnonconf}.  
\end{exam}

It is remarkable that, according to Lemma \ref{lem:errest}, the discrete solutions converge with optimal speed. This is what the estimates \eqref{eq:errestchixbanach} and \eqref{eq:errestchixhilbert} say, which are similar to the famous C\'ea's Lemma \cite{cea1964}, see also \cite[Theorem 9.14]{AU23}. In the conforming case $X_n\subset X$ for all $n\in\mathbb{N}$ and if $X$ is reflexive, then Condition \eqref{eq:denserange} is automatically fulfilled if $Y = \displaystyle\lim_{n\to\infty}Y_n$ (see \cite[Theorem 2.4]{ACE}). However, in the nonconforming case $X_n\not\subset X$, Condition  \eqref{eq:denserange} cannot be omitted. We give an example.

\begin{exam} Let $Z = \mathbb{R}^2$, $Y= \mathbb{R}$, $b((x_1,x_2),y) = x_1 y$. Let $Y_n = Y$, $X_{2n} = \{ (\lambda, \lambda),\lambda\in \mathbb{R}\}$ and $X_{2n+1} = \{ (\lambda, -\lambda),\lambda\in \mathbb{R}\}$,  for all $n\in\mathbb{N}$. Then $\displaystyle\lim_{n\to\infty} X_n = \{(0,0)\}$. Even though the BNB condition \eqref{eq:betabnbn} is satisfied,  \eqref{eq:denserange} fails. Moreover, \eqref{eq:pbinZ} has no solution in $X$ if $L\ne 0$.
 
\end{exam}

The following result, which is  reciprocal  to Theorem \ref{thm:bnbnonconf}, shows that the BNB condition \eqref{eq:betabnbn} and the density condition \eqref{eq:denserange} are necessary to ensure convergence properties.

\begin{thm}\label{thm:bnbnonconfrec}
Let $Z,Y$ be Banach spaces and let $b~:~Z\times Y\to \mathbb{R}$  be a continuous bilinear form. 
Let $(X_n)_{n\in\mathbb{N}}$ and $(Y_n)_{n\in\mathbb{N}}$ be sequences of finite dimensional subspaces of $Z$ and $Y$, respectively, such that  $X_n$ and $Y_n$ have the same dimension for all $n\in\mathbb{N}$.

Let us assume that $Y = {\mathop{\rm w\mbox{-}lim}\limits_{n\to\infty} Y_n}$ and that, given $L\in Y'$, for any $n\in \N$ there exists $x_n\in X_n$ satisfying \eqref{eq:pbinXn} and that the sequence $(x_n)$ converges in $Z$ as $n\to\infty$.

Then there exists $\beta>0$ such that \eqref{eq:betabnbn} holds, and, letting $X = \displaystyle\lim_{n\to\infty} X_n$, \eqref{eq:denserange} holds as well.
\end{thm}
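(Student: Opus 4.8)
The plan is to prove the two conclusions by separate arguments: the uniform inf-sup bound \eqref{eq:betabnbn} will rest on the uniform boundedness principle, while the density condition \eqref{eq:denserange} will follow from a surjectivity statement.

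First I would note that the existence hypothesis already forces uniqueness of the discrete solutions. By the Hahn--Banach theorem every $\ell\in Y_n'$ is the restriction to $Y_n$ of some $L\in Y'$, so solvability of \eqref{eq:pbinXn} for all $L\in Y'$ means that $B_n:X_n\to Y_n'$, $B_nx=b(x,\cdot)|_{Y_n}$, is surjective; since $\dim X_n=\dim Y_n=\dim Y_n'$ it is an isomorphism, and the discrete solution operator $T_n:Y'\to Z$, $T_nL=x_n$, is a well-defined bounded linear map. The crucial step, which I expect to be the main obstacle, is to replace the $n$-dependent inf-sup constants by a single $\beta>0$. For this I would invoke Banach--Steinhaus: the assumption that $(T_nL)_n$ converges in $Z$ for each $L\in Y'$ makes it bounded, whence $C:=\sup_n\Vert T_n\Vert_{\mathcal{L}(Y',Z)}<\infty$. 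Then, for $x\in X_n$, I set $\ell_x=b(x,\cdot)|_{Y_n}$, extend it isometrically to $L\in Y'$ with $\Vert L\Vert_{Y'}=\Vert\ell_x\Vert_{Y_n'}$, and use uniqueness to identify $T_nL=x$; this gives $\Vert x\Vert_Z=\Vert T_nL\Vert_Z\le C\Vert\ell_x\Vert_{Y_n'}=C\sup_{y\in Y_n,\Vert y\Vert_Y\le 1}b(x,y)$, which is exactly \eqref{eq:betabnbn} with $\beta=1/C$. (If all $X_n$ are trivial, \eqref{eq:betabnbn} is vacuous and any $\beta$ works.)

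For the density condition I would first manufacture a solution of the continuous problem \eqref{eq:pbinZ}. Setting $SL=\lim_{n\to\infty}T_nL$, which exists by hypothesis and lies in $X=\lim_{n\to\infty}X_n$ by Definition \ref{def:limseqspace} (the sequence $T_nL\in X_n$ being its own witness), defines a bounded operator $S:Y'\to X$. I then verify that $SL$ solves \eqref{eq:pbinZ}: given $y\in Y=\mathop{\rm w\mbox{-}lim}\limits_{n\to\infty}Y_n$, I pick $y_n\in Y_n$ with $y_n\rightharpoonup y$; since weakly convergent sequences are bounded and $T_nL\to SL$ strongly, passing to the limit in $b(T_nL,y_n)=L(y_n)$ yields $b(SL,y)=L(y)$, using that a continuous bilinear form is jointly continuous for strong$\times$weak convergence. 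Hence $B:X\to Y'$, $Bx=b(x,\cdot)$, is surjective.

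Finally, \eqref{eq:denserange} becomes a one-line consequence: if $y\in Y$ satisfies $b(x,y)=0$ for all $x\in X$, then $\langle L,y\rangle_{Y',Y}=\langle Bx,y\rangle=0$ for every $L\in{\rm ran}(B)=Y'$, so $y$ is annihilated by all of $Y'$ and Hahn--Banach forces $y=0$. The only delicate points are the justification of the strong$\times$weak passage to the limit, which is routine once boundedness of $(y_n)$ is noted, and the bookkeeping for the degenerate case $X_n=\{0\}$; the genuinely substantial input is the Banach--Steinhaus argument producing the uniform $\beta$.
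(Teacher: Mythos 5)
Your proof is correct, and for the first conclusion it takes a genuinely different route from the paper's. The paper establishes the \emph{dual} inf-sup condition first: it introduces the norm $\|y\|_{Y_n}=\sup_{x\in X_n,\|x\|_Z=1}|b(x,y)|$ on $Y_n$, shows that the set $\{y/\|y\|_{Y_n}\}$ is weakly bounded by testing each $L\in Y'$ against the bounded discrete solutions $x_n$, deduces norm-boundedness from the uniform boundedness principle, and only then converts the resulting dual BNB inequality \eqref{eq:bnbstar} into the primal one \eqref{eq:betabnbn} via the finite-dimensional duality of Corollary~\ref{cor:bnbstar}. You instead apply Banach--Steinhaus directly to the solution operators $T_n=B_n^{-1}\circ(\cdot)|_{Y_n}:Y'\to Z$ and recover \eqref{eq:betabnbn} by a Hahn--Banach extension of $b(x,\cdot)|_{Y_n}$ together with uniqueness of the discrete solution; this avoids the detour through the dual condition and Corollary~\ref{cor:bnbstar} entirely, at the cost of having to observe that each $B_n$ is an isomorphism so that $T_n$ is well defined and bounded. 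Both arguments use the same underlying input in the same way --- boundedness of $(x_n)$ suffices for the uniform $\beta$, while convergence is needed only for the surjectivity of $B:X\to Y'$ --- and your treatment of the density condition \eqref{eq:denserange} (passing to the limit in $b(x_n,y_n)=L(y_n)$ with $y_n\rightharpoonup y$ and $x_n\to SL$, then concluding by Hahn--Banach) coincides with the paper's.
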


\begin{proof}
We first reproduce the (short) proof of \cite[Proposition 2.8]{ACE} in this case. 
The assumption that, for all $L\in Y'$, there exists $x_n\in X_n$ satisfying \eqref{eq:pbinXn} proves that the condition $b(\chi,y)=0$ for all $\chi\in X_n$ implies $y=0$ whenever $y\in Y_n$, and this for all $n\in\N$. Thus 
	\[ \|y\|_{Y_n}:=\sup_{x\in X_n,\|x\|_Z=1}|b(x,y)| \]
	defines a norm on $Y_n$.  Moreover, 
	\[ |b(x,y)|\leq \|x\|_Z \|y\|_{Y_n} \mbox{ for all }x\in X_n,y\in Y_n.    \]
	We show that the set 
	  \[  {\mathcal B}:=\left\{  \frac{y}{\|y\|_{Y_n}}:n\in\N,y\in Y_n,y\neq 0      \right\}   \]
	  is bounded. For that purpose, let $L\in Y'$. By assumption there exist $x_n\in X_n$ and $C>0$ such that 
	  \[b(x_n,y)=\langle L,y\rangle \mbox{ for all }y\in Y_n\]
	  and $\|x_n\|_Z\leq C$ for all $n\in\N$. Now, for $\frac{y}{\|y\|_{Y_n}}\in {\mathcal B}$, 
	  \[  \left|\langle L,\frac{y}{\|y\|_{Y_n}}\rangle \right|=|b(x_n,y)|\frac{1}{\|y\|_{Y_n}}\leq \|x_n\|_Z\leq C.\] 
	  This shows that $\mathcal B$ is weakly bounded and thus norm-bounded. Therefore there exists $\beta>0$ such that
	  $ \|y\|_Y\leq \frac{1}{\beta}\|y\|_{Y_n}$, i.e.  such that \eqref{eq:bnbstar} holds. So far, we merely used that the sequence $(x_n)$ is bounded. In the following argument we will use that it converges.
	  
	  By Corollary \ref{cor:bnbstar} we then get that \eqref{eq:betabnbn} holds (again with the same value of $\beta$).
Let $y\in Y$ and let $(y_n)$ be a sequence such that $y_n\rightharpoonup y$ in $Y$. For $L\in Y'$, let $x$ be the limit of the converging sequence  $(x_n)$ whose existence is assumed in the theorem.
Then, passing to the limit in $b(x_n,y_n) = L(y_n)$, we get $b(x,y) = L(y)$. This proves that, letting $X = \displaystyle\lim_{n\to\infty} X_n$, the mapping $B:X \to Y'$, defined by $ \langle Bx,y\rangle_{Y',Y} = b(x,y)$ for any $(x,y)\in X\times Y$ is surjective, which implies \eqref{eq:denserange}.
\end{proof}

\begin{rem}\label{rem:vionnier}
There are very simple examples where the hypotheses of Theorem~\ref{thm:bnbnonconfrec} are satisfied but $X$  and $Y$ are not reflexive. For example, let $X=\ell^1=c'_0$,  $Y=c_0$, $b(x,y)=\displaystyle \sum_{n=0}^\infty x_n y_n$ and $X_n=\span \{ e_0,\cdots, e_n\}$ in $X$, $Y_n=\span \{ e_0,\cdots,e_n\}$ in $Y$ where $e_{nm}=\delta_{nm}$.   
\end{rem}

\section{The evolution problem}\label{sec:pbcont}

\subsection{The continuous problem with generalized time boundary conditions}

Let us now provide the continuous framework for linear parabolic problems with general time boundary conditions.

\medskip

Let $T>0$, let ${U}$ be a Hilbert space over $\R$ which is continuous and densely embedded into another Hilbert space $H$. Then there exists $C_H>0$ such that
\begin{equation}\label{eq:contemb}
\Vert v\Vert_H\le C_H\Vert v\Vert_U\hbox{ for all }v\in U.
\end{equation}
As usual we identify $H$ with a subspace of ${U}'$ by letting
\[
 \langle y,u\rangle_{U',U} = \langle y,u\rangle_H,\hbox{ for all }y\in H, \  u\in U.
\]
This yields the Gelfand triple
\[   {U}\stackrel{d}{\hookrightarrow} H\hookrightarrow {U}'.\] 

Let $V=L^2(0,T;{U})$. Thus $V'=L^2(0,T;{U}')$. Letting $\mathbb{H} = L^2(0,T;{H})$, we have a further Gelfand triple
\[   V\stackrel{d}{\hookrightarrow} \mathbb{H} \hookrightarrow V'.\]

We define the classical space $W$ associated with the Gelfand triple by  
\begin{equation}\label{eq:defspaceW}
 W = \{u\in V;\ \exists C\ge 0,
    \langle u, v'\rangle_{{\mathbb H}}\le C\Vert v\Vert_{V}\mbox{ for all }v\in C^1_c((0,T);U)\},
\end{equation}
   which enables us to define, for any $u\in W$, the element $u'\in V'$ by
   \begin{equation}\label{eq:defwpinW}
    \langle u',v\rangle_{V',V} := -\langle u, v'\rangle_{{\mathbb H}} \mbox{ for all }  v\in C^1_c((0,T);U).
   \end{equation}
In other words, we can write $W$ as follows, introducing also a Hilbert structure,
\begin{equation}\label{eq:defW}  
W=H^1(0,T;{U}')\cap L^2(0,T;{U})\hbox{ with }\Vert v\Vert_W :=\Big(\Vert v\Vert_V^2+\Vert v'\Vert_{V'}^2\Big)^{1/2}, \ v\in W.
\end{equation}

We recall that it is possible to identify $W$ with a subspace of $C([0,T];H)$ and that there exists $C_T>0$ such that
\begin{equation}\label{eq:embWinH}
\sup_{t\in[0,T]}\Vert v(t)\Vert_{H}\le C_T \Vert v\Vert_{W},\hbox{ for all }v\in W.
\end{equation}

The following integration by parts formula (\cite[III Corollary 1.1 p.106]{Sho97}) plays an important role. 
\begin{lem}\label{lem:5.1}  
One has $W\subset {\mathcal C}([0,T],H)$ and 
\begin{equation*}
 \langle v',w\rangle_{{V}',{V}} + \langle w',v\rangle_{{V}',{V}}   = \langle v(T),w(T)\rangle_H-\langle v(0),w(0)\rangle_H,\hbox{ for all }v,w\in W.
\end{equation*}
\end{lem}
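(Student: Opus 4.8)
The plan is to prove the integration by parts formula of Lemma~\ref{lem:5.1} by a density argument, reducing the identity for general $v,w \in W$ to the case of smooth functions where it follows from the ordinary product rule and the fundamental theorem of calculus. The embedding $W \subset \mathcal{C}([0,T],H)$ together with \eqref{eq:embWinH} is the tool that makes both sides of the asserted identity continuous with respect to the $W$-norm, so once the identity is established on a dense subset it extends to all of $W$.

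First I would treat the smooth case. For $v,w \in \mathcal{C}^1([0,T];U)$ (or more precisely for functions smooth enough that $v',w'$ may be understood pointwise in $U \subset H \subset U'$), the scalar function $t \mapsto \langle v(t),w(t)\rangle_H$ is continuously differentiable on $[0,T]$, with derivative $\langle v'(t),w(t)\rangle_H + \langle v(t),w'(t)\rangle_H$; here I use that the Gelfand triple identification makes $\langle v'(t),w(t)\rangle_{U',U}$ coincide with $\langle v'(t),w(t)\rangle_H$ when the arguments are regular. Integrating this derivative over $[0,T]$ via the fundamental theorem of calculus yields
\[
 \int_0^T \big(\langle v'(t),w(t)\rangle_{U',U} + \langle w'(t),v(t)\rangle_{U',U}\big)\,{\rm d}t = \langle v(T),w(T)\rangle_H - \langle v(0),w(0)\rangle_H,
\]
which is exactly the claimed identity once the integrals are recognized as the duality pairings $\langle v',w\rangle_{V',V}$ and $\langle w',v\rangle_{V',V}$.

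Next I would invoke density: smooth $U$-valued functions are dense in $W$ for the $\Vert\cdot\Vert_W$ norm. Given arbitrary $v,w \in W$, choose approximating sequences $v_k,w_k$ of smooth functions with $v_k \to v$ and $w_k \to w$ in $W$. The left-hand side of the identity is continuous in $(v,w)$ because the duality pairing $\langle \cdot',\cdot\rangle_{V',V}$ is controlled by $\Vert v'\Vert_{V'}\Vert w\Vert_V \le \Vert v\Vert_W \Vert w\Vert_W$; the right-hand side is continuous because of \eqref{eq:embWinH}, which bounds the pointwise $H$-values $v(T),v(0),w(T),w(0)$ by the $W$-norms, so that the bilinear boundary terms converge. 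Passing to the limit $k \to \infty$ on both sides transfers the identity from the smooth pairs $(v_k,w_k)$ to $(v,w)$.

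The main obstacle is the justification of the density of smooth functions in $W$ and the careful handling of the Gelfand triple identifications, since in the integration one must pass between the $H$-inner product and the $U',U$-duality; this requires that the smooth approximants are regular enough that their derivatives live in $U$ (not merely $U'$) so that the pointwise product rule genuinely applies. Because this is a standard fact for the space $W$ associated with a Gelfand triple, the cleanest route is simply to cite the reference already indicated in the statement, namely \cite[III Corollary 1.1 p.106]{Sho97}, and to present the density-plus-smooth-case argument as the underlying justification rather than reproving the approximation theorem from scratch.
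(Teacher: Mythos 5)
Your argument is correct, and it coincides with what the paper does: the paper gives no proof of Lemma~\ref{lem:5.1} at all, merely citing \cite[III Corollary 1.1 p.106]{Sho97}, and your sketch (product rule for smooth $U$-valued functions, then density of such functions in $W$ combined with the continuity of both sides furnished by \eqref{eq:embWinH} and the bound $|\langle v',w\rangle_{V',V}|\le\Vert v\Vert_W\Vert w\Vert_W$) is exactly the standard argument behind that citation. No gap.
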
  
 
In the following lemma we describe the trace space of $W$. 
\begin{lem}\label{lem:5.2n}
For all $x,y\in H$ there exists $u\in W$ such that $u(0)=x$ and $u(T)=y$.  
\end{lem}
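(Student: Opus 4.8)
The plan is to show that the trace map $\gamma:W\to H\times H$, $\gamma(u)=(u(0),u(T))$, is surjective. This map is well defined and bounded thanks to the embedding $W\subset C([0,T];H)$ and \eqref{eq:embWinH}. The natural strategy is to produce, for a single prescribed endpoint value in $H$, an explicit element of $W$, and then glue two such functions together by a scalar cutoff. The crux is a smoothing (parabolic regularity) estimate, which I introduce through the semigroup associated with the Gelfand triple.

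First I would fix the bounded, coercive, symmetric form $a(u,v)=\langle u,v\rangle_U$ on $U$ and let $A\in\mathcal{L}(U,U')$ be the associated operator, $\langle Au,v\rangle_{U',U}=\langle u,v\rangle_U$. Its part in $H$ is self-adjoint and $-A$ generates a bounded analytic $C_0$-semigroup $(e^{-tA})_{t\ge 0}$ on $H$. The key regularity fact is that for every $x\in H$ the orbit $t\mapsto e^{-tA}x$ belongs to $W=H^1(0,T;U')\cap L^2(0,T;U)$ and has value $x$ at $t=0$: the inclusion in $L^2(0,T;U)$ follows from the energy identity $\int_0^{T}\Vert A^{1/2}e^{-tA}x\Vert_H^2\,{\rm d}t\le\tfrac12\Vert x\Vert_H^2$ together with $\Vert v\Vert_U\simeq\Vert A^{1/2}v\Vert_H$, while $\tfrac{d}{dt}e^{-tA}x=-Ae^{-tA}x\in L^2(0,T;U')$ is obtained from the analogous estimate in $U'$. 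Accordingly I set $v_1(t)=e^{-tA}x$ and $v_2(t)=e^{-(T-t)A}y$; both lie in $W$ (for $v_2$ one uses that time reflection $w\mapsto w(T-\cdot)$ preserves $W$), with $v_1(0)=x$ and $v_2(T)=y$.

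Next I would glue. Choose $\chi\in C^\infty([0,T];[0,1])$ with $\chi(0)=1$ and $\chi(T)=0$, and set
\[
 u(t)=\chi(t)\,v_1(t)+(1-\chi(t))\,v_2(t).
\]
Since $\chi$ and $\chi'$ are bounded, boundedness of $\Vert\chi\Vert_\infty$ gives $u\in L^2(0,T;U)=V$, and the product rule $u'=\chi' v_1+\chi v_1'-\chi' v_2+(1-\chi)v_2'$, using $V=L^2(0,T;U)\hookrightarrow L^2(0,T;U')=V'$ for the $\chi'v_i$ terms and $v_i'\in V'$ for the remaining terms, yields $u'\in V'$; hence $u\in W$ by \eqref{eq:defW}. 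Evaluating in $C([0,T];H)$ gives $u(0)=\chi(0)v_1(0)=x$ and $u(T)=(1-\chi(T))v_2(T)=y$, which is the claim.

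The main obstacle is the regularity step of the second paragraph, namely that $e^{-tA}x\in W$ for merely $x\in H$ rather than $x\in U$: this is exactly the smoothing of the parabolic semigroup from $H$ into $U$ in the $L^2$-in-time sense (equivalently, the Lions--Magenes identification of the trace space of $W$ with the interpolation space $H=[U,U']_{1/2}$). Once this estimate is in hand, the gluing argument is routine, and the surjectivity of $\gamma$ follows.
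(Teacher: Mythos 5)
Your proof is correct, but it takes a genuinely different route from the paper. The paper disposes of the single--endpoint lifting in one line by quoting the identification of $H$ with the real interpolation space between $V'$ and $V$ at parameter $1/2$, which by \cite[Proposition 1.2.10]{Lunardi} coincides with the trace space of $W$; it then glues the two resulting functions with the linear weight $u(s)=\frac 1 T\bigl((T-s)w(s)+s\,v(s)\bigr)$, which is exactly your cutoff argument with $\chi(s)=(T-s)/T$. You instead construct the lifts explicitly as semigroup orbits $t\mapsto e^{-tA}x$ and $t\mapsto e^{-(T-t)A}y$, where $A$ is the self--adjoint operator of the form $\langle\cdot,\cdot\rangle_U$, and verify membership in $W$ via the spectral energy identity $\int_0^T\Vert A^{1/2}e^{-tA}x\Vert_H^2\,{\rm d}t\le\frac 1 2\Vert x\Vert_H^2$ together with $\Vert v\Vert_U=\Vert A^{1/2}v\Vert_H$ and $\Vert Av\Vert_{U'}=\Vert A^{1/2}v\Vert_H$. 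In effect you are reproving the nontrivial half of the trace--space/interpolation identification that the paper cites, which makes your argument longer but self--contained modulo standard facts on self--adjoint operators and analytic semigroups; the paper's version is shorter at the price of invoking an external interpolation theorem. All the steps you flag as routine (time reflection preserving $W$, the product rule for $u'=\chi'v_1+\chi v_1'-\chi'v_2+(1-\chi)v_2'$ with the $\chi'v_i$ terms handled through $V\hookrightarrow V'$, and the evaluation of the traces in $C([0,T];H)$) do go through, so there is no gap.
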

\begin{proof}  
One has $H=[ V',V]_{1/2,1/2}$, where  $[V',V]_{1/2,1/2}$ is the real interpolation space. The latter coincides with the trace space by \cite[Proposition 1.2.10]{Lunardi}. 
Hence there exist  $w\in W$ such that $w(0)=x$ and $v\in W$ such that $v(T)=y$. Then the function $u(s) = \frac 1 T ((T-s) w(s) + s v(s))$ satisfies  $u(0)=x$ and $u(T)=y$.
\end{proof}

Let ${\mathcal A}\in {\mathcal L}(V,V')$ be defined  by
\[ \langle {\mathcal A}v,w\rangle_{V',V}=\int_0^T a(t,v(t),w(t)){\rm d}t \mbox{ for all }v,w\in V=L^2(0,T;{U}), \]
 where  $a:[0,T]\times {U}\times {U}\to \R$ is a function  satisfying
 \begin{itemize}
 	\item[a)] $a(t,\cdot,\cdot):{U}\times {U}\to\R$ is bilinear for all $t\in [0,T]$;
 	\item[b)] $a(\cdot, v,w):[0,T]\to\R$ is measurable for all $v,w\in {U}$;
 	\item[c)] $|a(t,v,w)|\leq M\|v\|_{{U} }\|w\|_{{U}}$ for a.e. $t\in (0,T)$ and for all $v,w\in  {U}$ (we take $M\ge 1$ for the sake of simplicity);
 	\item[d)] $a(t,v,v)\geq \alpha \|v\|_{{U}}^2$ for all $t\in [0,T]$, $v\in {U}$ and some $\alpha>0$.   
 \end{itemize}

Let $\Phi~:~H\to H$ be a linear contraction (which means that  $\Vert\Phi v\Vert_H\le \Vert v\Vert_H$ for all $v\in H$). In the centre of this article is the following problem. Given $f\in V'$, $\xi_0\in H$,
\begin{equation}\label{eq:5.1}
\hbox{find }u\in W\hbox{ such that } u'+{\mathcal A}u=f\hbox{ and }u(0)-\Phi u(T)=\xi_0.
\end{equation}

\begin{thm}\label{thm:5.3}
 For all  $f\in V'$, $\xi_0\in H$, Problem \eqref{eq:5.1} has a unique solution.
\end{thm}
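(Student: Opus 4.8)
The plan is to recast \eqref{eq:5.1} as a single variational identity and to apply the Banach--Ne\v cas--Babu\v ska invertibility criterion recalled in Subsection~\ref{sec:infsup}. I take as solution space $Z=W$ and as test space the product $Y=V\times H$, which is a Hilbert space and hence reflexive, and I introduce the bilinear form
\[
 b\big(u,(v,\eta)\big)=\langle u',v\rangle_{V',V}+\langle \mathcal{A}u,v\rangle_{V',V}+\langle u(0)-\Phi u(T),\eta\rangle_H
\]
together with $L\big((v,\eta)\big)=\langle f,v\rangle_{V',V}+\langle \xi_0,\eta\rangle_H\in Y'$. Testing separately with $\eta=0$ and $v\in V$ arbitrary, and then with $v=0$ and $\eta\in H$ arbitrary, shows that $u\in W$ solves $b(u,\cdot)=L$ if and only if $u$ solves \eqref{eq:5.1}; continuity of $b$ follows from hypothesis (c), the trace embedding \eqref{eq:embWinH} and $\Vert\Phi\Vert\le 1$. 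Since $Y$ is reflexive, by the criterion of Subsection~\ref{sec:infsup} it then suffices to establish (i) an inf-sup estimate $\sup_{\Vert(v,\eta)\Vert\le1}b(u,(v,\eta))\ge\beta\Vert u\Vert_W$ for all $u\in W$, and (ii) the dual uniqueness condition \eqref{eq:2.3}.

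For the inf-sup estimate (i), which I expect to be the main obstacle, the decisive point is to control the wrong-sign boundary term produced by integration by parts. By Lemma~\ref{lem:5.1} with $v=w=u$ one has $\langle u',u\rangle_{V',V}=\tfrac12(\Vert u(T)\Vert_H^2-\Vert u(0)\Vert_H^2)$, so the naive choice $v=u$ leaves an uncontrolled $-\tfrac12\Vert u(0)\Vert_H^2$. I resolve this by testing with the pair $(v,\eta)=(u,u(0))$, so that the boundary contribution becomes
\[
 \tfrac12\Vert u(T)\Vert_H^2+\tfrac12\Vert u(0)\Vert_H^2-\langle \Phi u(T),u(0)\rangle_H\ge \tfrac12\big(\Vert u(T)\Vert_H-\Vert u(0)\Vert_H\big)^2\ge 0,
\]
where the contraction hypothesis $\Vert\Phi u(T)\Vert_H\le\Vert u(T)\Vert_H$ is exactly what renders the cross term harmless. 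Together with coercivity (d) this gives $b(u,(u,u(0)))\ge\alpha\Vert u\Vert_V^2$, while $\Vert(u,u(0))\Vert_{V\times H}\lesssim\Vert u\Vert_W$ by \eqref{eq:embWinH}. To recover the missing $\Vert u'\Vert_{V'}$ I use that for any $w\in V$, $\langle u',w\rangle_{V',V}=b(u,(w,0))-\langle\mathcal{A}u,w\rangle_{V',V}$, whence $\Vert u'\Vert_{V'}\le N+M\Vert u\Vert_V$ with $N:=\sup_{\Vert(v,\eta)\Vert\le1}b(u,(v,\eta))$. A short case distinction, according to whether $\Vert u'\Vert_{V'}$ is or is not dominated by a fixed multiple of $\Vert u\Vert_V$, then turns the two bounds $N\gtrsim\Vert u\Vert_V^2/\Vert u\Vert_W$ and $\Vert u'\Vert_{V'}\le N+M\Vert u\Vert_V$ into $N\ge\beta\Vert u\Vert_W$ for a suitable $\beta>0$.

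For the dual uniqueness (ii), I suppose $b(u,(v,\eta))=0$ for all $u\in W$. Restricting to $u\in C^1_c((0,T);U)$ annihilates the boundary term and, via $\vert\int_0^T a(t,u,v)\,\mathrm{d}t\vert\le M\Vert v\Vert_V\Vert u\Vert_V$ and the very definition \eqref{eq:defspaceW} of $W$, shows that $v\in W$ with $v'=\mathcal{A}^*v$, where $\langle\mathcal{A}^*v,u\rangle_{V',V}:=\langle\mathcal{A}u,v\rangle_{V',V}$. Reinserting this and applying Lemma~\ref{lem:5.1} for general $u\in W$ rewrites the identity as $\langle u(T),v(T)-\Phi^*\eta\rangle_H+\langle u(0),\eta-v(0)\rangle_H=0$ for all $u\in W$; since by Lemma~\ref{lem:5.2n} the traces $u(0),u(T)$ range independently over $H$, this forces $\eta=v(0)$ and $v(T)=\Phi^*v(0)$. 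Testing the backward equation with $v$ gives $\tfrac12(\Vert v(T)\Vert_H^2-\Vert v(0)\Vert_H^2)=\langle\mathcal{A}v,v\rangle_{V',V}\ge\alpha\Vert v\Vert_V^2\ge0$, while $\Vert v(T)\Vert_H=\Vert\Phi^*v(0)\Vert_H\le\Vert v(0)\Vert_H$ (as $\Phi^*$ is again a contraction) yields the reverse inequality; hence $\Vert v\Vert_V=0$, so $v=0$ and $\eta=v(0)=0$. With (i) and (ii) established, the criterion of Subsection~\ref{sec:infsup} shows that $u\mapsto b(u,\cdot)$ is an isomorphism of $W$ onto $Y'$, giving existence and uniqueness of the solution of \eqref{eq:5.1} for every $f\in V'$ and $\xi_0\in H$.
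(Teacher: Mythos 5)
Your proposal is correct, but it takes a genuinely different route from the paper. The paper does not prove Theorem~\ref{thm:5.3} directly: it cites \cite{RDV1} for the general case and then obtains an independent proof only as a byproduct of the discretization, namely in Theorem~\ref{thm:cv}, by combining the uniform discrete BNB estimates for the Euler $\theta$-scheme, the dual uniqueness condition of Lemma~\ref{lem:xhasdenserange}, and the interpolation result $X\subset\lim_{n\to\infty}X_n$ of Theorem~\ref{thm:3.9}, all fed into the abstract nonconforming Theorem~\ref{thm:bnbnonconf}. You instead run the classical BNB criterion of Subsection~\ref{sec:infsup} entirely at the continuous level on $W\times(V\times H)$. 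Your dual-uniqueness argument is essentially identical to the paper's Lemma~\ref{lem:xhasdenserange} (same reduction to $v'=\mathcal{A}^*v$, same use of Lemmas~\ref{lem:5.1} and~\ref{lem:5.2n}, same energy identity with $\Vert\Phi^*\Vert\le 1$). The new ingredient is your direct continuous inf-sup estimate: the test pair $(u,u(0))$ turns the wrong-sign trace term into $\tfrac12(\Vert u(T)\Vert_H-\Vert u(0)\Vert_H)^2\ge 0$ via the contraction hypothesis, and the recovery $\Vert u'\Vert_{V'}\le N+M\Vert u\Vert_V$ with a case distinction closes the estimate; this is an elementary stand-in for the quantitative machinery of Lemma~\ref{lem:suffbnb}, which the paper invokes only in the discrete setting (the paper even remarks that the continuous inf-sup ``could also be proved using Lemma~\ref{lem:suffbnb}''). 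What each approach buys: yours is a short, self-contained well-posedness proof independent of any scheme (and of \cite{RDV1}); the paper's route is longer but delivers the convergence and optimal error estimates of the numerical methods, with well-posedness falling out for free. One cosmetic remark: your constant $\beta$ is not optimized, but that is irrelevant for the statement being proved.
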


For $\Phi=0$,  Problem \eqref{eq:5.1} is an initial value problem whose well-posedness is due to J-L. Lions  \cite[Théorème 1.1 p.46]{Lio61}, see also \cite[III Proposition 2.3 p.112]{Sho97} or \cite[XVIII.3 Th\'eor\`eme 2]{DL}. For $\Phi = {\rm Id}$, $\xi_0 = 0$, we find time-periodic boundary conditions. If $\Phi$ is an arbitrary contraction and $a$ is symmetric, then Theorem \ref{thm:5.3} is proved in \cite[II. Proposition 2.3]{Sho97}, where selfadjointness is used in an essential way. The general case has been proved recently in \cite{RDV1} in the framework of a theory of derivations. We obtain another proof as a byproduct of our estimates which we give in Section \ref{sub:4.3}.

\begin{exam}
  The following example is the prototype of parabolic problems:
  \begin{itemize}
   \item The considered steady problem is the homogeneous Dirichlet   problem, which corresponds to $U = H^1_0(\Omega)$, where $\Omega\subset \mathbb{R}^d$  is an open bounded set, and $\Vert u\Vert_U = \Vert \nabla u\Vert_{L^2(\Omega)^d}$.

   \item  $a(t,\cdot,\cdot):{U}\times {U}\to\R$ is defined  by  $a(t,u,v) = \int_\Omega \nabla u(x)\cdot\nabla v(x) {\rm d}x$.
   \item We let $H = L^2(\Omega)$. 
   \item If $\Phi = 0$ is selected, then the above hypotheses on the form $a$ hold with $M=\alpha = 1$.
  \end{itemize}

\end{exam}

\begin{rem}
 The coercivity condition d) may be replaced by the more general condition 
 \begin{itemize}
 	\item[d')] $a(t,v,v) + \lambda\Vert v\Vert_{H}^2\geq \alpha \|v\|_{{U}}^2$ for all $v\in {U}$ and all $t\in [0,T]$,
 \end{itemize} 
 where $\lambda\ge 0$ and $\alpha>0$ are fixed constants. In that case, we have to assume that $\Vert\Phi\Vert\le e^{-\lambda T}$. In fact, the following rescaling allows one to reduce the problem to the case $\lambda = 0$: let $\widetilde{\Phi}= e^{\lambda T}\Phi$, $\widetilde{a}(t,u,v) = a(t,u,v)+\lambda\langle u,v\rangle_H$. Then condition d) is verified for $\widetilde{a}$. Let $f\in V'$. Define $\widetilde{f}$ by $\widetilde{f}(t) = e^{-\lambda t}f(t)$. Then by Theorem \ref{thm:5.3} there exists a unique $\widetilde{u}$  such that $\widetilde{u}'+\widetilde{\mathcal A}\widetilde{u}=\widetilde{f}\hbox{ and }\widetilde{u}(0)-\widetilde{\Phi} \widetilde{u}(T)=\xi_0$, where $\widetilde{\mathcal A} v = {\mathcal A} v + \lambda v$, $v\in V$. Let $u(t) = e^{\lambda t}\widetilde{u}(t)$. Then $u$ is the unique solution of Problem \eqref{eq:5.1}.
 \end{rem}

 The purpose of this article is to define finite dimensional approximations of Problem \eqref{eq:5.1} whose solutions converge to the solution of \eqref{eq:5.1}. In the case $\Phi = 0$ a semi-discretization (i.e. a mere discretization in the space $U$ and not in time) is given in \cite{DL}. Our point is to discretize the time and space variables simultaneously.

\subsection{Formulation in an extended framework}\label{sec:extended}
In order to define a discrete approximation it will be convenient to give an equivalent variational formulation of  Problem \eqref{eq:5.1}.

We denote by $\mathcal{B}([0,T];H)$ the space of all bounded functions from $[0,T]$ to $H$ and we define the space
 \begin{multline}\label{eq:defG}
  G = \mathcal{B}([0,T];H)\cap V = \{ w\in\mathcal{B}([0,T];H): w(t)\in U\hbox{ for a.e. }t\in (0,T), \\\
  \hbox{ }t\mapsto w(t)\hbox{ is measurable from }(0,T)\to U\hbox{ and}\int_0^T\Vert w(t)\Vert_U^2{\rm d}t<\infty\}
 \end{multline}
\begin{lem}
 $G$ is a Banach space equipped with the norm
 \begin{equation}\label{eq:defnorG}
   \Vert w\Vert_G^2 = \sup_{t\in[0,T]}\Vert w(t)\Vert_H^2 + \Vert w\Vert_V^2.
\end{equation}
\end{lem}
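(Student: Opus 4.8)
The plan is to verify that $G$, equipped with the norm $\Vert w\Vert_G^2 = \sup_{t\in[0,T]}\Vert w(t)\Vert_H^2 + \Vert w\Vert_V^2$, satisfies the norm axioms and is complete. First I would check that $\Vert\cdot\Vert_G$ is a genuine norm on $G$. Homogeneity and the triangle inequality follow immediately since $\Vert\cdot\Vert_G$ is built as a combination of the sup-norm on $\mathcal{B}([0,T];H)$ and the $V$-norm, both of which are norms; one uses that $(\sup_t\Vert w(t)\Vert_H^2+\Vert w\Vert_V^2)^{1/2}$ is the Euclidean combination of two seminorms and hence subadditive. For definiteness, $\Vert w\Vert_G=0$ forces $\Vert w\Vert_V=0$, so $w=0$ as an element of $V=L^2(0,T;U)$; combined with $\sup_t\Vert w(t)\Vert_H=0$ this gives $w(t)=0$ for every $t\in[0,T]$, so $w=0$ in $G$.

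The substantive part is completeness, so I would take a Cauchy sequence $(w_k)_{k\in\mathbb{N}}$ in $G$ and produce a limit in $G$. The definition of $\Vert\cdot\Vert_G$ gives two pieces of information: $(w_k)$ is Cauchy for the sup-norm $\sup_{t\in[0,T]}\Vert\cdot(t)\Vert_H$, hence Cauchy in the Banach space $\mathcal{B}([0,T];H)$ of bounded $H$-valued functions (uniform convergence of $H$-valued functions, $H$ complete), and simultaneously $(w_k)$ is Cauchy in $V=L^2(0,T;U)$, which is complete. Let $\overline{w}\in\mathcal{B}([0,T];H)$ be the uniform limit and $\widetilde{w}\in V$ the $V$-limit. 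The key compatibility step is to identify these two limits: uniform convergence in $H$ implies, for each fixed $t$, convergence $w_k(t)\to\overline{w}(t)$ in $H$, while $L^2(0,T;U)$-convergence of $(w_k)$ to $\widetilde{w}$ yields a subsequence converging to $\widetilde{w}(t)$ in $U$, hence in $H$ by the continuous embedding \eqref{eq:contemb}, for a.e. $t$. By uniqueness of limits in $H$ we get $\overline{w}(t)=\widetilde{w}(t)$ for a.e. $t$, so the single function $w:=\overline{w}$ lies in $\mathcal{B}([0,T];H)$ and agrees a.e. with an element of $V$; thus $w\in G$. Finally, since $w_k\to w$ uniformly in $H$ and in $V$, one has $\Vert w_k-w\Vert_G\to 0$, proving completeness.

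I expect the main obstacle to be precisely this reconciliation of the two modes of convergence, i.e. checking that the uniform-in-$H$ limit and the $L^2(0,T;U)$ limit define the same element of $G$, so that the candidate limit genuinely belongs to the intersection $\mathcal{B}([0,T];H)\cap V$ and not merely to one of the two spaces. The argument hinges on extracting an a.e.-convergent subsequence from the $V$-Cauchy sequence and comparing, pointwise in $H$, with the uniform limit, using the embedding \eqref{eq:contemb} to pass from $U$-convergence to $H$-convergence. Once the two limits are identified, membership $w\in G$ (including measurability of $t\mapsto w(t)$ into $U$ and square-integrability of $\Vert w(\cdot)\Vert_U$) is inherited from $\widetilde{w}\in V$, and the norm convergence is immediate from the construction.
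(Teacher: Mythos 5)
Your proof is correct and follows essentially the same route as the paper: extract the uniform-in-$H$ limit and the $V$-limit from the Cauchy sequence, identify them via an a.e.-convergent subsequence in $U$ (hence in $H$ by the embedding), and conclude membership in $G$ together with norm convergence. The only difference is that you also verify the norm axioms explicitly, which the paper omits.
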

\begin{proof}
 Let $(u_n)$ be a Cauchy sequence in $G$. Then there exists $u\in \mathcal{B}([0,T];H)$ such that $u_n(t)\to u(t)$ in $H$ uniformly on $[0,T]$. Since the sequence $(u_n)$ is a Cauchy sequence in $V$, there exists $v\in V$ such that $u_n\to v$ in $V$. Consequently, there exists a subsequence of $(u_n)$ converging almost everywhere to $v$ in $U$. This shows that $u\in G$, $u=v$ a.e. and that $\Vert u - u_n\Vert_G\to 0$ as $n\to\infty$.
\end{proof}
Notice that $W\subset G$.

Let us introduce the following spaces:

\begin{itemize}
 \item We define
 
 \begin{equation}\label{eq:defyz}
 	Z = V'\times G \mbox{ and }Y = V\times H,
 \end{equation}
 with norms $\Vert(z_1,z_2)\Vert_Z^2 := \Vert z_1\Vert_{V'}^2 + \Vert z_2\Vert_{G}^2 $ and $\Vert(y_1,y_2)\Vert_Y^2 := \Vert y_1\Vert_{V}^2 + \Vert y_2\Vert_{H}^2$.
 \item We define the space $X \subset Z$ by 
 \begin{equation}\label{eq:defhatw}
  X = \{ (u',u)\in Z, u\in W\}.
 \end{equation}
 \item We define the form $b~:~Z\times Y\to \mathbb{R}$ by
 \begin{multline}\label{eq:defb} 
 b((z_1,z_2),(y_1,y_2)) = \langle z_1 + \mathcal{A} z_2,y_1\rangle_{V',V} + \langle z_2(0) - \Phi z_2(T),y_2\rangle_{H}\\
 \hbox{for all }(z_1,z_2)\in Z\hbox{ and } (y_1,y_2)\in Y.
 \end{multline}
 Since, for any $(z_1,z_2)\in Z$ and $(y_1,y_2)\in Y$, we have
\[
  |b((z_1,z_2),(y_1,y_2))|\le \Vert z_1 + \mathcal{A} z_2\Vert_{V'}\Vert y_1\Vert_V +  \Vert z_2(0) - \Phi z_2(T)\Vert_{H}\Vert y_2\Vert_H, 
\]
we get
\[
  |b((z_1,z_2),(y_1,y_2)))|\le\Big( (\Vert z_1 \Vert_{V'} + M\Vert z_2\Vert_{V})^2 + (\Vert z_2(0) \Vert_{H}+\Vert  z_2(T)\Vert_{H})^2\Big)^{1/2}\Vert (y_1,y_2)\Vert_Y, 
\]
which yields, since $M\ge 1$,
\[
  |b(z,y)|\le2 M \Vert z\Vert_Z\Vert y\Vert_Y.
\]
 \item We define $L~:~Y\to \mathbb{R}$ by

\begin{equation}\label{eq:defL} 
 L(y)= \langle f, y_1\rangle_{V',V}+\langle \xi_0, y_2\rangle_{H}\hbox{ for all }(y_1,y_2)\in Y,
\end{equation}
where $f\in V'$, $\xi_0\in H$ are the given data in Problem~\ref{eq:5.1}.
\end{itemize}

We then observe that Problem \eqref{eq:5.1} is equivalent to the variational problem: 
\begin{equation}\label{eq:pbcontinZ}
\hbox{find }\widehat{x}=(u',u)\in X\hbox{ such that }b(\widehat{x},y) = L(y)\hbox{ for all }y\in Y.
\end{equation}
 Defining suitable finite dimensional subspaces $X_n$ of $Z$ and $Y_n$ of $Y$ we will obtain the Euler scheme in Section \ref{sec:eulerex} and the discontinuous Galerkin scheme in Section \ref{sec:dg}. In these sections it is proved that $X\subset \displaystyle\lim_{n\to\infty} X_n$ (see Theorems \ref{thm:3.9} and \ref{thm:3.9dg}).  Note that $X\subset \displaystyle\lim_{n\to\infty} X_n$ and the BNB condition imply the inf-sup condition \eqref{eq:betabnb} by Lemma~\ref{lem:infsup}. Nonetheless the inf-sup condition could also  be proved using Lemma~\ref{lem:suffbnb}.

 In order to apply Theorem \ref{thm:bnbnonconf}, it is also important to check that $X$ is a closed space and fulfills Condition \eqref{eq:denserange} which we do in the following lemma.
 
 \begin{lem}\label{lem:xhasdenserange}
  The space $X$ defined by \eqref{eq:defhatw} and  \eqref{eq:defyz} is a closed subspace of $Z$ and fulfills Condition \eqref{eq:denserange} with respect to the bilinear form $b$.
 \end{lem}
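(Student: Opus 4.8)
The plan is to establish the two assertions separately. For the closedness of $X$ in $Z$, I would start from a sequence $(u_n',u_n)\in X$, with $u_n\in W$, converging in $Z=V'\times G$ to some $(z_1,z_2)$. By definition of the norm on $Z$ this gives $u_n'\to z_1$ in $V'$ and $u_n\to z_2$ in $G$, hence $u_n\to z_2$ in $V$ (because $\Vert\cdot\Vert_V\le\Vert\cdot\Vert_G$) and therefore in $\mathbb{H}$ through the Gelfand embedding $V\hookrightarrow\mathbb{H}$. Passing to the limit in the defining relation \eqref{eq:defwpinW}, written as $\langle u_n',v\rangle_{V',V}=-\langle u_n,v'\rangle_{\mathbb{H}}$ for $v\in C^1_c((0,T);U)$, yields
\[
\langle z_1,v\rangle_{V',V}=-\langle z_2,v'\rangle_{\mathbb{H}}\quad\hbox{for all }v\in C^1_c((0,T);U).
\]
Since $|\langle z_2,v'\rangle_{\mathbb{H}}|\le\Vert z_1\Vert_{V'}\Vert v\Vert_V$, the defining property \eqref{eq:defspaceW} shows $z_2\in W$ with $z_2'=z_1$, so $(z_1,z_2)=(z_2',z_2)\in X$ by \eqref{eq:defhatw}. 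Hence $X$ is closed.

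For the condition \eqref{eq:denserange}, let $(y_1,y_2)\in V\times H$ satisfy $b((u',u),(y_1,y_2))=0$ for all $u\in W$, and let $\Phi^*\in\mathcal{L}(H)$ be the Hilbert-space adjoint of $\Phi$, which is again a contraction. My first step would be to test only against $u$ with $u(0)=u(T)=0$ (for instance $u\in C^1_c((0,T);U)$), so that the boundary term drops and there remains $\langle u',y_1\rangle_{V',V}+\int_0^T a(t,u(t),y_1(t))\,{\rm d}t=0$. Reading this as $|\langle y_1,u'\rangle_{\mathbb{H}}|\le M\Vert y_1\Vert_V\Vert u\Vert_V$ and invoking \eqref{eq:defspaceW}--\eqref{eq:defwpinW}, I would conclude that the a priori merely square-integrable function $y_1$ actually belongs to $W$ and satisfies $\langle y_1',u\rangle_{V',V}=\langle\mathcal{A}u,y_1\rangle_{V',V}$ for all $u\in V$. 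This \emph{regularity bootstrap} is the heart of the proof.

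Once $y_1\in W$ is available, I would insert a general $u\in W$ into the identity and remove the interior contributions with the integration-by-parts formula of Lemma \ref{lem:5.1}: the two terms involving $\mathcal{A}$ cancel and only the boundary pairings survive, leaving
\[
\langle u(T),y_1(T)-\Phi^*y_2\rangle_H+\langle u(0),y_2-y_1(0)\rangle_H=0\quad\hbox{for all }u\in W.
\]
By Lemma \ref{lem:5.2n} the pair $(u(0),u(T))$ runs over all of $H\times H$, which forces the adjoint boundary conditions $y_1(0)=y_2$ and $y_1(T)=\Phi^*y_2=\Phi^*y_1(0)$. Finally I would close the argument by an energy estimate: Lemma \ref{lem:5.1} applied with $v=w=y_1$ together with the coercivity assumption (d) gives $\Vert y_1(T)\Vert_H^2-\Vert y_1(0)\Vert_H^2=2\int_0^T a(t,y_1(t),y_1(t))\,{\rm d}t\ge 2\alpha\Vert y_1\Vert_V^2\ge 0$, while $y_1(T)=\Phi^*y_1(0)$ and $\Vert\Phi^*\Vert\le 1$ give $\Vert y_1(T)\Vert_H\le\Vert y_1(0)\Vert_H$. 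These two inequalities are compatible only if $\Vert y_1\Vert_V=0$, whence $y_1=0$ and then $y_2=y_1(0)=0$.

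I expect the main obstacle to be precisely the first step of the second part, the regularity bootstrap that upgrades the variational identity to $y_1\in W$ with the correct adjoint equation; the remaining steps are then routine, requiring only care with the adjoint $\Phi^*$ and with the signs coming from the integration by parts. One could alternatively obtain \eqref{eq:denserange} at once from the surjectivity of $u\mapsto(u'+\mathcal{A}u,u(0)-\Phi u(T))$ guaranteed by Theorem \ref{thm:5.3}, but I would prefer the self-contained argument above, which does not presuppose well-posedness and hence keeps the later independent proof of Theorem \ref{thm:5.3} free of circularity.
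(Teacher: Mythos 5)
Your proposal is correct and follows essentially the same route as the paper's proof: closedness by passing to the limit in the weak-derivative identity \eqref{eq:defspaceW}--\eqref{eq:defwpinW}, and then, for \eqref{eq:denserange}, the same three steps of (i) testing against compactly supported $v$ to bootstrap $y_1\in W$ with $y_1'=\mathcal{A}^*y_1$, (ii) Lemma \ref{lem:5.1} plus Lemma \ref{lem:5.2n} to extract the adjoint boundary conditions $y_2=y_1(0)$, $y_1(T)=\Phi^*y_1(0)$, and (iii) the energy identity combined with coercivity and $\Vert\Phi^*\Vert\le 1$ to force $y_1=0$. Your remark about avoiding Theorem \ref{thm:5.3} to keep the argument non-circular is exactly the paper's intent.
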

 \begin{proof}
 Let $(w_n)_{n\in \mathbb{N}}$ be a sequence of elements of $W$ such that $(w'_n)_{n\in{\mathbb N}}$ and $(w_n)_{n\in {\mathbb N}}$ are converging sequences in $V'$ and $G$  to some functions $r\in V'$ and $w\in G$, respectively. Then for all $v\in C^1_c((0,T);U)$, we have
 \[
  \langle w'_n,v\rangle_{V',V} = -\langle v',w_n\rangle_{\mathbb{H}}\mbox{ for all }n\in {\mathbb N}.
 \]
 Letting $n\to\infty$ in the above relation, we get, since the convergence in $G$ implies that in $V$ and therefore that in $\mathbb{H}$,
 \[
   \langle r,v\rangle_{V',V} = -\langle v',w\rangle_{\mathbb{H}}.
 \]
 Using \eqref{eq:defspaceW}-\eqref{eq:defwpinW} we deduce that $w\in W$ and $r = w'$. We have shown  that $X$ is closed.

 \medskip

  Turning to the proof of \eqref{eq:denserange}, let $y := (w,z)\in Y = V\times H$  such that
  \[
   b(x,y)=0\mbox{ for all }x\in X.
  \]
Let us prove that $y = 0$. In view of the definitions \eqref{eq:defhatw} of $X$ and \eqref{eq:defb} of $b$, the preceding relation means that
\[
  \langle v' +{\mathcal A} v,w\rangle_{V',V} + \langle v(0)-{\Phi} v(T)),z\rangle_H = 0\mbox{ for all }v\in W.
\]
We first take $v =r\in C^1_c(]0,T[;U)$. Then we obtain 
\[
 \langle r',w\rangle_{V',V}= \langle r',w\rangle_{\mathbb{H}} = -\langle {\mathcal A} r,w\rangle_{V',V} = -\langle {\mathcal A}^* w,r\rangle_{V',V}.
\]
In view of the definitions \eqref{eq:defspaceW} of $W$ and \eqref{eq:defwpinW} of $w'$, this shows that  $w'\in V'$ with $w' = {\mathcal A}^* w$. Hence $w\in W$.
By  Lemma \ref{lem:5.1} we get 
\[
 \langle v',w\rangle_{V',V} = -\langle w',v\rangle_{V',V}+\langle w(T),v(T)\rangle_{H}-\langle w(0),v(0)\rangle_{H}.
\]
Then
\[
  \ \langle w(T),v(T)\rangle_{H}-\langle w(0),v(0)\rangle_{H}+ \langle v(0)-{\Phi} v(T),z\rangle_H=0 \mbox{ for all }v\in W,
\]
which means that
\[
 \ \langle w(T)-\Phi^* z,v(T)\rangle_{H}+\langle z-w(0),v(0)\rangle_{H}=0 \mbox{ for all }v\in W.
\]
Lemma \ref{lem:5.2n} implies that $z = w(0)$ and $w(T) = \Phi^* w(0)$.
Hence, since $w' - {\mathcal A}^* w = 0$, we get
\[
\frac 1 2 (\Vert \Phi^* w(0)\Vert_H^2 -\Vert w(0)\Vert_H^2) - \langle {\mathcal A}^* w,w\rangle_{V',V} =  \langle w' - {\mathcal A}^* w,w\rangle_{V',V} =0.
\]
The coercivity property $ \langle {\mathcal A}^* w,w\rangle_{V',V} \ge \alpha \Vert w\Vert_V^2$ and the property $\Vert\Phi\Vert\le 1$ imply $w=0$, and therefore $z = w(0) = 0$.

This finishes the proof of Condition \eqref{eq:denserange}.
 \end{proof}

\section{Euler time-space approximation}\label{sec:eulerex}

In this section we keep the framework of Section \ref{sec:pbcont}. In particular, ${\Phi} :H\to H$ is linear and $\Vert\Phi\Vert\le 1$. Moreover, $f\in V'=L^2(0,T;U')$ and $\xi_0\in H$ are given. 

The aim of this section is to define a finite dimensional approximation of Problem \eqref{eq:5.1} and to show that it fits into the framework of Section \ref{sec:NCBNB}. 

\subsection{Description of the Euler schemes}\label{sub:4.1}
  
 Let $U_n$ be a finite dimensional subspace of $U$. For the moment, the lower index $n$ is fixed and used for designing a fixed finite dimensional space.
 
 Let $N\in\mathbb{N}\setminus\{0\}$ and define $k = \frac {T}{N}$. 
 For all $m=0,\ldots,N-1$, $A^{(m)}~:~U_n\to U_n$ denotes the coercive linear operator given by
\[
\langle A^{(m)} u,v\rangle_{U}  = \frac 1 {k}\int_{mk}^{(m+1) k} a(t,u,v) {\rm d}t\mbox{ for all } u,v\in U_n
\]
and  $f^{(m)}\in U'$ is defined by
\[
 f^{(m)} = \frac 1 {k}\int_{mk}^{(m+1) k} f(t) {\rm d}t.
\]

 The explicit Euler scheme consists in seeking $N+1$ elements of $U_n$, denoted by $(w^{(m)})_{m=0,\ldots,N}$, such that
\begin{equation}\label{eq:schemeiniex}
\langle w^{(0)} - \Phi w^{(N)} , u\rangle_H  =  \langle \xi_0 , u\rangle_H\mbox{ for all }u\in U_n
\end{equation}
and 
\begin{equation}\label{eq:schememex}
\langle \frac{w^{(m+1)}-w^{(m)}}{k}, u\rangle_{ H } + \langle A^{(m)} w^{(m)},u\rangle_{U} =  \langle f^{(m)},u\rangle_{U',U}\mbox{ for all }m=0,\ldots,N-1 \mbox{ and }u\in U_n.
\end{equation}

Note that, if $\Phi\equiv 0$, the scheme is the usual explicit scheme, and the existence of a solution to \eqref{eq:schememex} is clear. In the general case, a linear system involving $w^{(N)}$ must be solved, and its invertibility is proved by Theorem \ref{lem:estimnoruTHthetaex}, under a condition on $k$.  The invertibility of this system is not true in general (consider the case $H = U$, $N=1$, $A^{(0)} = 2 {\rm Id}$, $k=T=1$, $\Phi = -{\rm Id}$).

 The implicit Euler scheme consists in seeking $N+1$ elements of $U_n$, denoted by $(w^{(m)})_{m=0,\ldots,N}$, such that \eqref{eq:schemeiniex} holds and 
\begin{equation}\label{eq:schememim}
\langle \frac{w^{(m+1)}-w^{(m)}}{k}, u\rangle_{ H } + \langle A^{(m)} w^{(m+1)},u\rangle_{U} =  \langle f^{(m)},u\rangle_{U',U}\mbox{ for all }m=0,\ldots,N-1\mbox{ and }u\in U_n.
\end{equation}

Both schemes are therefore particular cases of the so-called $\theta-$scheme, given by  \eqref{eq:schemeiniex} and, for a given $\theta\in [0,1]$,
\begin{multline}\label{eq:schememextheta}
 \langle \frac{w^{(m+1)}-w^{(m)}}{k}, u\rangle_{ H } + \langle A^{(m)} (\theta w^{(m+1)}+ (1-\theta) w^{(m)}),u\rangle_{U} =  \langle f^{(m)},u\rangle_{U',U}\\
\mbox{ for all }m=0,\ldots,N-1\mbox{ and }u\in U_n .
\end{multline}
 
The case $\theta=0$ gives the explicit Euler scheme, the case $\theta = 1$ gives the implicit Euler scheme, and the case $\theta = \frac 1 2$ leads to the Crank-Nicolson scheme.

In the following, we will associate to each $(w^{(m)})_{m=0,\ldots,N}$ a step function and its discrete derivative, and we will show that this discrete solution  approximates in $Z$ (defined in \eqref{eq:defyz}) the solution of Problem \eqref{eq:pbcontinZ} (i.e. the variational formulation of Problem \eqref{eq:5.1}). For this purpose, we now define the space $W_n\subset G$ (see \eqref{eq:defG}) of all functions $w~:~[0,T]\to U_n$ for which there exist $N+1$ elements of $U_n$, denoted by $(w^{(m)})_{m=0,\ldots,N}$, such that
 \begin{multline}\label{eq:schememtheta}
   w(t) = \theta w^{(m+1)}+ (1-\theta) w^{(m)}\hbox{ for all }t\in (mk, (m+1)k),\hbox{ for all }m=0,\ldots,N-1\\
   \hbox{ and }w(mk) = w^{(m)}\hbox{ for all }m=0,\ldots,N. 
 \end{multline}
We observe that the space $W_n$ is isomorphic to $U_n^{N+1}$.

Each element of $W_n$ defines a unique element of $V$, but this identification is not injective, because of the initial or the final values. To keep the notation simple, we do not distinguish the notation for the everywhere defined $w\in W_n$ and the element of $V_n\subset V$ it defines, where $V_n$ is the subspace of $V$ defined by
\begin{equation}\label{eq:defVn}
V_n := \{v :(0,T)\to U_n: = v \mbox{ is constant on }(mk,(m+1)k)\mbox{ for each }m=0,\cdots, N-1 \}.
\end{equation}
We define the discrete derivative of $w\in W_n$ as follows:
 \begin{equation}\label{eq:defqnevolstrex}
 \partial w(t) = \frac{w^{(m+1)}-w^{(m)}}{k},\  t\in (mk, (m+1)k)\hbox{ for all }m=0,\ldots,N-1.
 \end{equation}
 But we will have to define this discrete derivative as an element of $V'$ in an unusual way. Let us denote by $P^V_n$ the orthogonal projection on $V_n$ in $V$. Note that this can be explicitely written in the following way:
\begin{equation}\label{eq:projvproju}
  P^V_n v(t) = \frac 1 k\int_{mk}^{(m+1)k} P^U_n v(s) {\rm d}s, \ t\in (mk, (m+1))\mbox{ for all }v\in V,
\end{equation}
 where $P^U_n$ denotes the orthogonal projection on $U_n$ in $U$. We now define  the discrete derivative of an element $w\in W_n$ as an element $\widehat{\partial}w$  of $V'$ in the following way:
 \begin{equation}\label{eq:defdhatfromd}
  \langle \widehat{\partial}w,v\rangle_{V',V} = \langle \partial w,P^V_n v\rangle_{\mathbb{H}}\mbox{ for all }v\in V.
\end{equation}
Using \eqref{eq:projvproju}, one can check that
 \[
  \langle \widehat{\partial} w,v\rangle_{V',V} = \int_{0}^{T} \langle  \partial w(t),P^U_n v(t)\rangle_{H}{\rm d}t\mbox{ for all }w\in W_n,v\in V. 
 \]

Note that $\widehat{\partial}w$ is different from $\partial w$ seen as an element of $V'$ through the embedding of $\mathbb{H}$ in $V'$, but that we have
\[
   \langle \widehat{\partial}w, v\rangle_{V',V} =\langle {\partial} w, v\rangle_{\mathbb{H}} \mbox{ for all } w,v\in W_n,
\]
as well as
\begin{equation}\label{eq:normeDUVprime}
 \Vert \widehat{\partial}w\Vert_{V'} = \sup_{v\in V,\Vert v\Vert_V = 1}\langle \partial w,P^V_n v\rangle_{\mathbb{H}} = \sup_{v\in V_n,\Vert v\Vert_V = 1}\langle \partial w,v\rangle_{\mathbb{H}}\mbox{ for all }w\in W_n .
\end{equation}
The definition \eqref{eq:defdhatfromd} of $\widehat{\partial}$ will imply Condition \eqref{eq:hypxun} in Lemma \ref{lem:suffbnb}. This in turn will be used to establish the BNB condition.
	
In order to put this setting into the framework of Sections \ref{sec:NCBNB} and \ref{sec:pbcont}, we define the space $X_n\subset Z := V'\times G$ by
\begin{equation}\label{eq:defxhat}
X_n = \{(\widehat{\partial} w,w): w\in W_n\}.
\end{equation}
The mapping $W_n\to X_n$, $w\mapsto (\widehat{\partial} w,w )$ is then bijective. We now define the space $Y_n \subset Y := V\times H$ by
\begin{equation}\label{eq:defY}
Y_n = V_n\times U_n.
\end{equation}
Then $Y_n$ has the same dimension ${\rm dim}(U_n)^{N+1}$ as $W_n$ and $X_n$.

We recall that the bilinear form $b~:~Z\times Y\to \mathbb{R}$ is defined by \eqref{eq:defb} and $L~:~Y\to \mathbb{R}$ is defined  by \eqref{eq:defL}.
\begin{lem}\label{lem:schemeexpabs} Let $(w^{(m)})_{m=0,\ldots,N}$ be a sequence of elements of $U_n$. Let $w\in W_n$ be given by \eqref{eq:schememtheta}. Let $x = (\widehat{\partial} w,w)$. The following assertions are equivalent:
\begin{enumerate}
 \item[(i)] The family  $(w^{(m)})_{m=0,\ldots,N}$ satisfies \eqref{eq:schemeiniex} and \eqref{eq:schememextheta}.
\item[(ii)] The element $x$ is solution of the following problem:
\begin{equation}\label{eq:schemeinZY}
\hbox{find }x\in X_n\hbox{ such that } b(x,y) = L(y)\hbox{ for all }y\in Y_n.
\end{equation}
\end{enumerate}
\end{lem}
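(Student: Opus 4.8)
The plan is to prove the equivalence by unwinding both sides against the definition \eqref{eq:defb} of $b$, the definition \eqref{eq:defL} of $L$, and the definitions \eqref{eq:defY} of $Y_n$ and \eqref{eq:defdhatfromd} of $\widehat\partial$, and then checking that testing against a basis of $Y_n = V_n\times U_n$ reproduces exactly the scheme equations \eqref{eq:schemeiniex} and \eqref{eq:schememextheta}. Since $Y_n$ is a product, I would first observe that \eqref{eq:schemeinZY} is equivalent to the conjunction of two families of equations: one obtained by testing with $y = (y_1,0)$ for all $y_1\in V_n$, and one by testing with $y = (0,y_2)$ for all $y_2\in U_n$.

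First I would treat the second component. For $y=(0,y_2)$ with $y_2\in U_n$, the definition \eqref{eq:defb} gives $b(x,y) = \langle w(0)-\Phi w(T), y_2\rangle_H$, while $L(y)=\langle\xi_0,y_2\rangle_H$. Since \eqref{eq:schememtheta} forces $w(0)=w^{(0)}$ and $w(T)=w^{(N)}$, this reads $\langle w^{(0)}-\Phi w^{(N)}, y_2\rangle_H = \langle\xi_0,y_2\rangle_H$ for all $y_2\in U_n$, which is precisely \eqref{eq:schemeiniex}. This part is a direct identification and carries no real difficulty.

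Next I would treat the first component, which is the heart of the matter. For $y=(y_1,0)$ with $y_1\in V_n$, \eqref{eq:defb} gives $b(x,y) = \langle \widehat\partial w + \mathcal A w, y_1\rangle_{V',V}$ and $L(y)=\langle f,y_1\rangle_{V',V}$. Because $y_1\in V_n$ is constant on each interval $(mk,(m+1)k)$, one splits both sides into a sum over $m=0,\ldots,N-1$. The key point is that for $v = y_1\in V_n$ one has $P^V_n v = v$, so by \eqref{eq:defdhatfromd} the term $\langle\widehat\partial w, y_1\rangle_{V',V}$ equals $\langle \partial w, y_1\rangle_{\mathbb H}$, and using \eqref{eq:defqnevolstrex} each interval contributes $\langle \frac{w^{(m+1)}-w^{(m)}}{k}, y_1^{(m)}\rangle_H\, k$, where $y_1^{(m)}$ is the constant value of $y_1$ on the $m$-th interval. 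Similarly, from the definition of $\mathcal A$ and of $A^{(m)}$, and using that on $(mk,(m+1)k)$ the function $w$ equals the constant $\theta w^{(m+1)}+(1-\theta)w^{(m)}$ as prescribed by \eqref{eq:schememtheta}, the term $\langle\mathcal A w, y_1\rangle_{V',V}$ contributes $\langle A^{(m)}(\theta w^{(m+1)}+(1-\theta)w^{(m)}), y_1^{(m)}\rangle_U\, k$ on each interval, and the right-hand side contributes $\langle f^{(m)}, y_1^{(m)}\rangle_{U',U}\, k$. Since $y_1\in V_n$ ranges over all choices of arbitrary constants $y_1^{(m)}\in U_n$ independently on each interval, equality of the summed identities for all $y_1\in V_n$ is equivalent to equality of each summand after dividing by $k$, which is exactly \eqref{eq:schememextheta} for every $m$ and every test vector in $U_n$.

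The main obstacle, such as it is, lies in the careful bookkeeping of the first component: I must make sure to invoke $P^V_n y_1 = y_1$ for $y_1 \in V_n$ so that the unusual definition \eqref{eq:defdhatfromd} of $\widehat\partial$ collapses to the naive $\mathbb H$-pairing, and then to justify the interval-by-interval decoupling from the freedom to choose the constants $y_1^{(m)}\in U_n$ independently. Once this is in place, combining the two components and reading the equivalence in both directions yields the claim; the reverse implication is immediate since every step above is an equivalence.
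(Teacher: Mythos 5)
Your proposal is correct and follows essentially the same route as the paper: both proofs unwind \eqref{eq:schemeinZY} against the product structure of $Y_n=V_n\times U_n$, identify the $U_n$-component with \eqref{eq:schemeiniex}, and recover \eqref{eq:schememextheta} interval by interval from the freedom to choose the constant values of the test function in $V_n$ independently. Your version is merely more explicit than the paper's (in particular in spelling out that $P^V_n y_1=y_1$ for $y_1\in V_n$, which makes $\langle\widehat\partial w,y_1\rangle_{V',V}$ collapse to $\langle\partial w,y_1\rangle_{\mathbb H}$), and that added care is harmless.
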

\begin{proof} 
(i)$\Longrightarrow$(ii).

Let $y :=(v,z)\in Y_n$, with $v\in V_n$ defined from the constant values $(v^{(m)})_{m=1,\ldots,N}$. Letting $u = k v^{(m)}$ in \eqref{eq:schememex} and $u = z$  in \eqref{eq:schemeiniex}, adding the result provides \eqref{eq:schemeinZY}.

(ii)$\Longrightarrow$(i).

For $u\in U_n$, we select $y :=(v,z)\in Y_n$, with $v\in V_n$ defined from $(v^{(m)})_{m=0,\ldots,N}$ with $v^{(m)} = 0$ for $m=0,\ldots N-1$ and $z= u$ in \eqref{eq:schemeinZY}. We recover  \eqref{eq:schemeiniex}. Now selecting $z=0$ and $v^{(m)} = u$ for any $m=0,\ldots,N-1$,  $v^{(m')} = 0$ for $m'\ne m$, we recover  \eqref{eq:schememextheta}.
 
\end{proof}

It is remarkable that the continuous problem is obtained by injecting $W$ in $Z$, and the discrete problem by injecting $W_n$ into $Z$. This will allow us to use the results of Section \ref{sec:NCBNB}.
For that we have to establish the BNB condition \eqref{eq:betabnbn}.

\subsection{The BNB condition}\label{sub:4.2}
%
%
%
We keep the setting of Subsection~\ref{sub:4.1}.  
 An important remark is the following: since all norms are equivalent in finite dimensional spaces, there exists a constant $\mu_n$ such that
 \begin{equation}\label{eq:defmuh}
   \Vert u\Vert_U \le \mu_n  \Vert u\Vert_H\mbox{ for all } u\in U_n.
 \end{equation}
We choose the best constant; that is,  $\mu_n=\max\{ \frac{\|u\|_{U}}{\|u\|_H}:u\in U_n\setminus \{0\}\}$. 

One should be aware that $\mu_n$ strongly depends on $U_n$, and generally tends to infinity as $U_n$  approximates $U$ (in the case of a standard elliptic problem, and in the case where $h$ is a mesh size, then $\mu_n$ behaves as $1/h$, see \cite[Remark 1.143 p.77]{EG04}).

 But recall that, for the moment, $U_n$ is fixed.
 
 Let us show the following preliminary lemma.
 
\begin{lem}\label{lem:normhnormVp}The following holds.
\begin{equation}\label{eq:normhnormVp}
 \Vert \partial w\Vert_{\mathbb{H}}\le k\mu_n \Vert \widehat{\partial} w\Vert_{V'}\mbox{ for all }w\in W_n.
\end{equation}
\end{lem}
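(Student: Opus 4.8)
The plan is to read $\Vert\widehat{\partial}w\Vert_{V'}$ through its characterization \eqref{eq:normeDUVprime} as a supremum over $V_n$, and to combine it with the pointwise norm comparison \eqref{eq:defmuh} on the finite dimensional space $U_n$. The decisive observation is that $\partial w$, as defined in \eqref{eq:defqnevolstrex}, is itself an element of $V_n$: it is piecewise constant on the intervals $(mk,(m+1)k)$ with values $\frac{w^{(m+1)}-w^{(m)}}{k}\in U_n$, hence it satisfies \eqref{eq:defVn}. This is what makes $\partial w$ an admissible test function in the supremum defining $\Vert\widehat{\partial}w\Vert_{V'}$.

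First I would dispose of the trivial case $\partial w=0$, for which \eqref{eq:normhnormVp} is immediate. Assuming $\partial w\neq 0$, I would test \eqref{eq:normeDUVprime} against the normalized element $v=\partial w/\Vert\partial w\Vert_V\in V_n$, which yields
\[
\Vert\widehat{\partial}w\Vert_{V'}\ge \Big\langle \partial w,\frac{\partial w}{\Vert\partial w\Vert_V}\Big\rangle_{\mathbb{H}} = \frac{\Vert\partial w\Vert_{\mathbb{H}}^2}{\Vert\partial w\Vert_V},
\]
so that $\Vert\partial w\Vert_{\mathbb{H}}^2\le \Vert\partial w\Vert_V\,\Vert\widehat{\partial}w\Vert_{V'}$. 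Next I would control $\Vert\partial w\Vert_V$ by $\Vert\partial w\Vert_{\mathbb{H}}$: since $\partial w(t)\in U_n$ for a.e.\ $t$, inequality \eqref{eq:defmuh} gives $\Vert\partial w(t)\Vert_U\le\mu_n\Vert\partial w(t)\Vert_H$, and squaring and integrating over $(0,T)$ produces $\Vert\partial w\Vert_V\le\mu_n\Vert\partial w\Vert_{\mathbb{H}}$. Substituting this into the previous bound and cancelling one factor $\Vert\partial w\Vert_{\mathbb{H}}>0$ gives $\Vert\partial w\Vert_{\mathbb{H}}\le\mu_n\Vert\widehat{\partial}w\Vert_{V'}$, which is the announced estimate, in fact with the sharper constant $\mu_n$ in place of $k\mu_n$.

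The argument is essentially routine, so I do not expect a serious obstacle; this is consistent with \eqref{eq:normhnormVp} being only a preliminary step toward the BNB estimate. The two points that require care are the verification that $\partial w\in V_n$ (so that the supremum in \eqref{eq:normeDUVprime} may legitimately be tested against $\partial w$ itself) and the correct invocation of the definition \eqref{eq:defmuh} of the best constant $\mu_n$ on $U_n$, applied pointwise in time before integration. I note in passing that the factor $k$ appearing in the statement is not needed for the inequality to hold.
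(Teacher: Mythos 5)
Your proof is correct and follows essentially the same route as the paper's: both arguments test the supremum \eqref{eq:normeDUVprime} against (a scalar multiple of) $\partial w$ itself, which indeed lies in $V_n$, and then invoke the inverse estimate \eqref{eq:defmuh}; the paper absorbs via Young's inequality where you divide directly, an immaterial difference. One caveat about the constant: both your argument and the paper's actually yield $\Vert \partial w\Vert_{\mathbb{H}}\le \mu_n\Vert\widehat{\partial}w\Vert_{V'}$ --- equivalently \eqref{eq:estimsumdif}, which is the form used in the subsequent lemmas --- and for $k<1$ this is \emph{weaker}, not sharper, than the stated bound with $k\mu_n$; the stated bound is in fact false in general (take $U=H$, so that $\mu_n=1$ and $\Vert\widehat{\partial}w\Vert_{V'}=\Vert\partial w\Vert_{\mathbb{H}}$), so the factor $k$ in \eqref{eq:normhnormVp} should be read as a misprint rather than as something your argument fails to capture.
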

  
\begin{proof} 
 Let $v\in V_n$ with  $v^{(m)} = w^{(m+1)} - w^{(m)}$, $m=0,\ldots,N-1$. Then
\begin{multline*}
	 \sum_{m=0}^{N-1}k\Vert w^{(m+1)} - w^{(m)}\Vert_H^2 =k\langle \widehat{\partial} w, v\rangle_{V',V}\le k\Vert \widehat{\partial} w\Vert_{V'}\Vert v\Vert_{V} \\
	\le  \frac {k^2 \mu_n^2} 2 \Vert \widehat{\partial} w\Vert_{V'}^2 +\frac 1 {2  \mu_n^2} \Vert v\Vert_{V}^2 \\
	\le \frac {k \mu_n^2} 2 \Vert \widehat{\partial} w\Vert_{V'}^2 + \frac 1 {2 } \sum_{m=0}^{N-1} k\Vert w^{(m+1)} - w^{(m)}\Vert_H^2.
	\end{multline*}
This yields
\begin{equation}\label{eq:estimsumdif}
 \sum_{m=0}^{N-1} k\Vert w^{(m+1)} - w^{(m)}\Vert_H^2 \le  k^2 \mu_n^2   \Vert \widehat{\partial} w\Vert_{V'}^2,
\end{equation}
which yields \eqref{eq:normhnormVp}.
\end{proof}

We  now state and prove the BNB property for this scheme under a CFL-condition. This type of condition had at first been introduced by Courant, Friedrichs and Lewy in \cite{CFL}. If $\mu_n\le$ const $n$ (as for the finite element method with $h=1/n$), then it says that $N\ge $const$\cdot n^2$.
Recall that $b:Z\times Y\to\mathbb{R}$ is the form defined by \eqref{eq:defb} where $Z = V'\times G$, $Y = V\times H$ with the natural norms. The subspaces $X_n$ of $Z$ and $Y_n$ are given by \eqref{eq:defxhat} and \eqref{eq:defY}. In the following key lemma the essential hypothesis is that the time step $k = \frac T N$ is small enough. Recall that $\mu_n$ is the constant of \eqref{eq:defmuh}.

\begin{lem}\label{lem:estimnoruTHthetaex}  
Let $\theta\in [0,\frac 1 2)$. 
There exists a constant $\beta\in (0,2M]$, only depending on $\alpha$, $M$, $T$, $\theta$ and $C_H$, such that the following holds.
If $N$ is so large that
	\begin{equation}\label{eq:cfltheta}
	 k\le \frac  {\alpha^2 }{24\, \mu_n^2 M^3 (\frac 1 2 - \theta)},
	\end{equation}
then 
	\begin{equation}\label{eq:bnbeulerextheta}
	 \sup_{\substack{y\in Y_n\\ \Vert y\Vert_Y = 1}} b(x,y) \ge \beta \Vert x\Vert_Z \hbox{ for all }x\in X_n.
	\end{equation}
 \end{lem}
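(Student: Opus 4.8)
The plan is to prove the BNB inequality \eqref{eq:bnbeulerextheta} directly by producing, for each $x=(\widehat{\partial}w,w)\in X_n$, an explicit test element $y\in Y_n$ for which $b(x,y)$ dominates $\Vert x\Vert_Z$. The natural candidate is a linear combination of the ``energy'' test function and the derivative test function. First I would recall that $\Vert x\Vert_Z^2 = \Vert\widehat{\partial}w\Vert_{V'}^2+\Vert w\Vert_G^2$, where $\Vert w\Vert_G^2 = \sup_{t}\Vert w(t)\Vert_H^2+\Vert w\Vert_V^2$, so I must control three quantities: $\Vert\widehat{\partial}w\Vert_{V'}$, $\Vert w\Vert_V$ and $\sup_t\Vert w(t)\Vert_H$. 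I would begin with the choice $y=(w,w(0)-\Phi w(N))$ or a variant adapted to the $\theta$-scheme, and compute $b(x,y)$ using the definition \eqref{eq:defb}. The first term $\langle\widehat{\partial}w+\mathcal{A}w,w\rangle_{V',V}$ splits into a coercive piece $\langle\mathcal{A}w,w\rangle\ge\alpha\Vert w\Vert_V^2$ and a discrete-derivative piece $\langle\widehat{\partial}w,w\rangle_{V',V}=\langle\partial w,w\rangle_{\mathbb H}$ (using that $w\in W_n$ so that $P_n^V$ acts trivially). The latter, via the summation-by-parts identity $\sum_m\langle w^{(m+1)}-w^{(m)},\theta w^{(m+1)}+(1-\theta)w^{(m)}\rangle_H$, produces the boundary terms $\tfrac12(\Vert w^{(N)}\Vert_H^2-\Vert w^{(0)}\Vert_H^2)$ plus a defect $(\tfrac12-\theta)\sum_m\Vert w^{(m+1)}-w^{(m)}\Vert_H^2$, which is nonnegative for $\theta\le\tfrac12$ but must be absorbed.

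Next I would handle the boundary contribution. Combining the derivative boundary terms with the second term of $b$, namely $\langle w^{(0)}-\Phi w^{(N)},w^{(0)}-\Phi w^{(N)}\rangle_H$ (if I take $y_2=w(0)-\Phi w(N)$), and using the contraction property $\Vert\Phi\Vert\le1$ together with an elementary inequality, I expect to recover control of $\Vert w^{(0)}\Vert_H^2$ and hence, through \eqref{eq:embWinH}-type discrete estimates, of $\sup_t\Vert w(t)\Vert_H$. To extract the derivative norm $\Vert\widehat{\partial}w\Vert_{V'}$ I would add a small multiple $\delta$ of a second test function designed to pair with $\widehat{\partial}w$; choosing $v\in V_n$ of unit $V$-norm nearly realizing $\Vert\widehat{\partial}w\Vert_{V'}=\sup_{v\in V_n,\Vert v\Vert_V=1}\langle\partial w,v\rangle_{\mathbb H}$ from \eqref{eq:normeDUVprime}, I get $\langle\widehat{\partial}w,\delta v\rangle\approx\delta\Vert\widehat{\partial}w\Vert_{V'}$, while the cross terms $\langle\mathcal{A}w,\delta v\rangle$ and the derivative defect are error terms bounded by $M\Vert w\Vert_V\Vert\delta v\Vert_V$ and by the defect sum.

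The main obstacle, and the place where the CFL condition \eqref{eq:cfltheta} enters, is controlling the \emph{defect term} $(\tfrac12-\theta)\sum_m\Vert w^{(m+1)}-w^{(m)}\Vert_H^2$ when it appears with the wrong sign in the cross estimates, together with bounding the difference quotients. This is exactly where Lemma~\ref{lem:normhnormVp} is decisive: it gives $\Vert\partial w\Vert_{\mathbb H}\le k\mu_n\Vert\widehat{\partial}w\Vert_{V'}$, equivalently $\sum_m k\Vert w^{(m+1)}-w^{(m)}\Vert_H^2\le k^2\mu_n^2\Vert\widehat{\partial}w\Vert_{V'}^2$ by \eqref{eq:estimsumdif}. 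Thus the defect is bounded by $(\tfrac12-\theta)k\mu_n^2\Vert\widehat{\partial}w\Vert_{V'}^2$, and the smallness hypothesis \eqref{eq:cfltheta} makes this small relative to the coercive gain and the $\delta$-term, allowing all error terms to be absorbed by Young's inequality. I would then choose $\delta$ (depending only on $\alpha,M$) and finally verify the constant $\beta$ depends only on $\alpha,M,T,\theta,C_H$ and satisfies $\beta\le2M$ by continuity of $b$. The delicate bookkeeping is calibrating $\delta$ and the numerical factor $24\,M^3$ in \eqref{eq:cfltheta} so that every absorbed term leaves a fixed positive fraction of $\Vert x\Vert_Z^2$; I expect this constant-chasing, rather than any conceptual issue, to be the real work.
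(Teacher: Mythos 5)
Your overall architecture — summation by parts to produce the telescoping boundary terms plus the defect $(\frac 1 2-\theta)\sum_m\Vert w^{(m+1)}-w^{(m)}\Vert_H^2$, then Lemma~\ref{lem:normhnormVp} and the CFL condition \eqref{eq:cfltheta} to absorb that defect into a small multiple of $\Vert\widehat{\partial}w\Vert_{V'}^2$ — is exactly the computation the paper performs (see \eqref{eq:ineqestimtheta} and \eqref{eq:ineqestimthetab}). Your explicit test function $y_1=w+\delta v$ with $v$ nearly realizing $\Vert\widehat{\partial}w\Vert_{V'}$ is the concrete, unpackaged version of what the paper delegates to the abstract Lemma~\ref{lem:suffbnb} (whose proof optimizes over test functions via the Riesz map and Lemma~\ref{lem:zigoto}). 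Up to that point the two routes are equivalent.

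There is, however, a genuine gap in your treatment of the boundary terms. You claim that pairing $w(0)-\Phi w(T)$ with $y_2$ and invoking $\Vert\Phi\Vert\le 1$ ``recovers control of $\Vert w^{(0)}\Vert_H^2$.'' This fails precisely in the hardest admissible case $\Vert\Phi\Vert=1$: take $\Phi=\mathrm{Id}$ and any $w\in W_n$ with $w(0)=w(T)=\xi\neq 0$. Then the second component of $b$ vanishes for every choice of $y_2$, and the telescoped derivative term contributes only $\frac 1 2(\Vert w(T)\Vert_H^2-\Vert w(0)\Vert_H^2)=0$, so the combination you describe gives no lower bound on $\Vert w(0)\Vert_H^2$ whatsoever — yet $\sup_t\Vert w(t)\Vert_H$, which is part of $\Vert x\Vert_Z$, can be large. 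The missing ingredient is a separate estimate of the form $\Vert w(0)\Vert_H^2\le \cter{cst:ineqtheta}^{-1}\,\frac{\alpha^2}{24M^3}(\Vert\widehat{\partial}w\Vert_{V'}^2+\Vert w\Vert_V^2)$, which the paper obtains by averaging the discrete energy inequality $\frac 1 2\Vert w(0)\Vert_H^2\le\frac 1 2\Vert w^{(m')}\Vert_H^2+\Vert\widehat{\partial}w\Vert_{V'}\Vert w\Vert_V$ over all time levels $m'$ and then comparing $k\sum_{m}\Vert w^{(m)}\Vert_H^2$ with $\Vert w\Vert_{\mathbb H}^2\le C_H^2\Vert w\Vert_V^2$ (this comparison uses $\theta<\frac 1 2$ through the factor $(1-2\theta)^2$, and is where $T$ and $C_H$ — and part of the $\theta$-dependence — enter the constant $\beta$). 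Without this step the argument cannot close for periodic-type conditions, which the lemma is required to cover. A secondary technical point: your $y_2=w(0)-\Phi w(T)$ need not belong to $U_n$ (only $Y_n=V_n\times U_n$ is admissible), so $\Phi w(T)$ must be replaced by its $H$-orthogonal projection onto $U_n$; the paper handles this inside Lemma~\ref{lem:suffbnb} via $\Vert P^H_{3}\circ\Phi\Vert\le\Vert\Phi\Vert$.
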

\begin{proof} 
We have, for $0\le m < m'\le N$,
\begin{multline}\label{eq:ineqestimtheta}
 \int_{mk}^{m'k} \langle \partial w(t), w(t)\rangle_H{\rm d}t = \sum_{p=m}^{m'-1} k \langle \frac {w^{(p+1)} - w^{(p)}} k,\theta w^{(p+1)}+(1-\theta)w^{(p)}\rangle_H \\
 = \frac 1 2 \Vert w^{(m')}\Vert_H^2 +(\theta-\frac 1 2)\sum_{p=m}^{m'-1} \Vert w^{(p+1)} - w^{(p)}\Vert_H^2 -\frac 1 2 \Vert w^{(m)}\Vert_H^2.
\end{multline}
Letting $m' = N$ in the previous equation provides
\[
 \frac 1 2 \Vert w(T)\Vert_H^2  +   \Vert \widehat{\partial} w \Vert_{V'}\Vert w\Vert_{V}\ge \frac 1 2 \Vert w^{(m)}\Vert_H^2\hbox{ for }m=0,\ldots,N.
\]
Using the Young inequality and the triangle inequality, this gives 
\begin{equation}\label{eq:estimusupbynorminZtheta} 
\sup_{t\in[0,T]}\Vert w(t)\Vert_H  \le  \Big( \Vert \widehat{\partial} w \Vert_{V'}^2 + \Vert  w\Vert_V^2  + \Vert w(T)\Vert_H^2\Big)^{1/2}\mbox{ for all }w\in W_n.
\end{equation}
From \eqref{eq:ineqestimtheta}, we get, letting $m= 0$ and $m'=N$,
\[
 \frac 1 2 (\Vert w(T)\Vert_H^2  -\Vert w(0)\Vert_H^2)  =  \langle \widehat{\partial} w, w\rangle_{V',V}+ (\frac 1 2-\theta)\sum_{p=0}^{N-1} \Vert w^{(p+1)} - w^{(p)}\Vert_H^2,
\]
and therefore, applying \eqref{eq:normhnormVp} in Lemma \ref{lem:normhnormVp},
\[
 \frac 1 2 (\Vert w(T)\Vert_H^2  -\Vert w(0)\Vert_H^2)  \le  \langle \widehat{\partial} w, w\rangle_{V',V}+ (\frac 1 2-\theta)k \mu_n^2   \Vert \widehat{\partial} w\Vert_{V'}^2.
\]

Now using assumption \eqref{eq:cfltheta}, we obtain

\begin{equation}\label{eq:ineqestimthetab}
 \frac 1 2 (\Vert w(T)\Vert_H^2  -\Vert w(0)\Vert_H^2)  \le  \langle \widehat{\partial} w, w\rangle_{V',V}+ \frac {\alpha^2 }{24\,M^3 }   \Vert \widehat{\partial} w\Vert_{V'}^2.
\end{equation}

Letting  $m=0$ in \eqref{eq:ineqestimtheta}, we obtain
\[
 \frac k 2 \Vert w^{(m')}\Vert_H^2  +  k \Vert \widehat{\partial} w \Vert_{V'}\Vert w\Vert_{V}\ge \frac k 2 \Vert w(0)\Vert_H^2\hbox{ for }m'=0,\ldots,N.
\]
Summing the preceding equation on $m'=0,\ldots,N-1$ and multiplying by 2, we get
\[
  k  \sum_{m'=0}^{N-1}\Vert w^{(m')}\Vert_H^2  +  2T \Vert \widehat{\partial} w \Vert_{V'}\Vert w\Vert_{V}\ge  T \Vert w(0)\Vert_H^2.
\]
We now remark that
\begin{multline*}
 \Vert w\Vert_{\mathbb{H}}^2 = \sum_{m=0}^{N-1}k \Vert\theta w^{(m+1)}+(1-\theta)w^{(m)}\Vert_H^2\\
 \ge (1 - 2\theta)^2\sum_{m=0}^{N-1}k \Vert w^{(m)}\Vert_H^2  + \theta(1-2\theta)k(\Vert w(0)\Vert_H^2- \Vert w(T)\Vert_H^2).
\end{multline*}
The two preceding relations give
\[
 \frac {C_H^2}{(1 - 2\theta)^2}\Vert w\Vert_{V}^2 + \frac {\theta}{1 - 2\theta} k(\Vert w(T)\Vert_H^2- \Vert w(0)\Vert_H^2)
    +  T (\Vert \widehat{\partial} w \Vert_{V'}^2 + \Vert w\Vert_{V}^2)\ge T \Vert w(0)\Vert_H^2.
\]
Using \eqref{eq:ineqestimthetab} and $k\le T$, we deduce

\begin{multline*}
 \frac {C_H^2}{(1 - 2\theta)^2}\Vert w\Vert_{V}^2 + \frac {\theta}{1 - 2\theta} T\Big(2\langle \widehat{\partial} w, w\rangle_{V',V}+ \frac {\alpha^2 }{12\,M^3 }      \Vert \widehat{\partial} w\Vert_{V'}^2\Big)
    +  T (\Vert \widehat{\partial} w \Vert_{V'}^2 + \Vert w\Vert_{V}^2)\ge T \Vert w(0)\Vert_H^2.
\end{multline*}

This leads to the existence of $\ctel{cst:ineqtheta}>0$, only depending on $T$, $C_H$, $\alpha$, $\theta$ and $M$ such that

\[
 \frac  {\alpha^2 }{24\, M^3}(\Vert \widehat{\partial} w \Vert_{V'}^2 + \Vert w\Vert_{V}^2)\ge \cter{cst:ineqtheta} \Vert w(0)\Vert_H^2.
\]
In addition to \eqref{eq:ineqestimthetab}, we obtain
\begin{multline*}
  \langle \widehat{\partial}w,w\rangle_{V',V}+  \frac  {\alpha^2 }{12\, M^3}(\Vert w \Vert_{V}^2+\Vert \widehat{\partial}w \Vert_{V'}^2)\ge \frac {1} 2 (\Vert w(T)\Vert_H^2 - \Vert w(0)\Vert_H^2)+\cter{cst:ineqtheta}\Vert w(0)\Vert_H^2,\\
  \hbox{ for all } w\in W_n.
\end{multline*}
  We then obtain \eqref{eq:condlim} with
$\omega = \frac 1 2$ and $\delta = \min(\frac 1 2 ,\cter{cst:ineqtheta})$ (note that $\omega$ and $\delta$ only depend on $\alpha$, $M$, $T$, $\theta$ and $C_H$).
We can now apply Lemma  \ref{lem:suffbnb}, letting $\widehat{X} = \{(\widehat{\partial} w,w,w(0),w(T)): w\in W_n\}$, since the quantities $\widehat{X}_i$ defined in the lemma satisfy $\widehat{X}_2 = V_n$ and $\widehat{X}_3 = U_n$. We get the existence of $\widehat{\beta}$,  only depending on $\alpha$, $M$, $T$, $\theta$ and $C_H$, such that
\[
	 \sup_{\substack{y\in Y_n\\ \Vert y\Vert_Y = 1}} b((\widehat{\partial} w,w),y) \ge \widehat{\beta} \Big( \Vert \widehat{\partial} w \Vert_{V'}^2 + \Vert  w\Vert_V^2 +\Vert w(0)\Vert_H^2 + \Vert w(T)\Vert_H^2\Big)^{1/2}\hbox{ for all }w\in W_n.
\]
Using \eqref{eq:estimusupbynorminZtheta} we obtain   $\beta$ (with the same dependencies) such that \eqref{eq:bnbeulerextheta} holds. 
\end{proof}

\begin{lem}\label{lem:estimnoruTHthetademi}  
 
Let $\theta=\frac 1 2$. Then there exists $\beta >0$ such that 
	\begin{equation}\label{eq:bnbeulerimthetademi}
	 \sup_{\substack{y\in Y_n\\ \Vert y\Vert_Y = 1}} b(x,y) \ge \beta \Vert x\Vert_Z \hbox{ for all }x\in X_n.
	\end{equation}
with the following dependencies.

{\bf Case $\Vert\Phi\Vert < 1$.} 

Then $\beta$  may be chosen only depending on  $T$,  $\alpha$, $M$ and  $\Vert\Phi\Vert$.

{\bf Case $\Vert\Phi\Vert = 1$.} 

Assuming that $k \mu_n \le C$, for some given $C>0$. Then 
$\beta$ may be chosen only depending on $\alpha$, $M$, $T$ and  $C$.

 \end{lem}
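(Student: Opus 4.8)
The plan is to reuse the architecture of the proof of Lemma~\ref{lem:estimnoruTHthetaex}, while exploiting the special algebra of the midpoint value $\theta=\frac12$. Putting $\theta=\frac12$ in \eqref{eq:ineqestimtheta} makes the remainder $(\theta-\frac12)\sum_{p}\Vert w^{(p+1)}-w^{(p)}\Vert_H^2$ vanish identically, so that for every $w\in W_n$,
\[
 \int_{mk}^{m'k}\langle\partial w(t),w(t)\rangle_H\,{\rm d}t=\tfrac12\Vert w^{(m')}\Vert_H^2-\tfrac12\Vert w^{(m)}\Vert_H^2,
\]
and in particular $\langle\widehat{\partial}w,w\rangle_{V',V}=\tfrac12\big(\Vert w(T)\Vert_H^2-\Vert w(0)\Vert_H^2\big)$. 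This exact energy identity is precisely what lets us drop the CFL restriction \eqref{eq:cfltheta}: there is no quadratic-difference remainder left to absorb through Lemma~\ref{lem:normhnormVp}. Since \eqref{eq:estimusupbynorminZtheta} continues to hold, the whole problem reduces, exactly as in Lemma~\ref{lem:estimnoruTHthetaex}, to verifying the coercivity estimate \eqref{eq:condlim} and then applying Lemma~\ref{lem:suffbnb} with $\widehat{X}=\{(\widehat{\partial}w,w,w(0),w(T)):w\in W_n\}$; concretely this amounts to controlling the nodal energy, and in particular $\Vert w(0)\Vert_H^2$, by the quantities that the test functions in $Y_n$ can detect.

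The reason two cases appear is that, at $\theta=\frac12$, the inequality $\Vert w\Vert_{\mathbb H}^2\ge(1-2\theta)^2\sum_m k\Vert w^{(m)}\Vert_H^2+\cdots$ used for $\theta<\frac12$ degenerates: the oscillating nodal sequence $w^{(m)}=(-1)^m e$ has vanishing midpoint average, so $\Vert w\Vert_V=0$ although $\Vert w(0)\Vert_H=\Vert e\Vert_H$. Thus the nodal energy cannot be recovered from $\Vert w\Vert_V^2+\Vert\widehat{\partial}w\Vert_{V'}^2$ alone. When $\Vert\Phi\Vert=1$ the missing control is supplied by the balance condition $k\mu_n\le C$. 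Writing $w^{(m)}=w(t)-\tfrac k2\,\partial w(t)$ for $t\in(mk,(m+1)k)$ and summing gives
\[
 k\sum_{m=0}^{N-1}\Vert w^{(m)}\Vert_H^2\le 2\Vert w\Vert_{\mathbb H}^2+\tfrac{k^2}2\Vert\partial w\Vert_{\mathbb H}^2\le 2C_H^2\Vert w\Vert_V^2+\tfrac{(k\mu_n)^2 k^2}2\Vert\widehat{\partial}w\Vert_{V'}^2,
\]
by \eqref{eq:contemb} and \eqref{eq:normhnormVp}; since $k\le T$ and $k\mu_n\le C$ the last coefficient is at most $\tfrac12 C^2T^2$. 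Combined with $\Vert w(0)\Vert_H^2\le\tfrac kT\sum_{m}\Vert w^{(m)}\Vert_H^2+2\Vert\widehat{\partial}w\Vert_{V'}\Vert w\Vert_V$ (take $m=0$ in the energy identity, sum over $m'$, use $kN=T$), this bounds $\Vert w(0)\Vert_H^2$ by $\Vert w\Vert_V^2+\Vert\widehat{\partial}w\Vert_{V'}^2$, and \eqref{eq:condlim} follows as before, with constants depending only on $\alpha,M,T,C$ (and the structural constant $C_H$).

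When $\Vert\Phi\Vert<1$ no step restriction is needed; the control comes from strict contractivity. The energy identity gives $\Vert w(T)\Vert_H^2=\Vert w(0)\Vert_H^2+2\langle\widehat{\partial}w,w\rangle_{V',V}$, and the triangle inequality gives $\Vert w(0)\Vert_H\le\Vert w(0)-\Phi w(T)\Vert_H+\Vert\Phi\Vert\,\Vert w(T)\Vert_H$. Eliminating $\Vert w(T)\Vert_H$ — a short computation splitting on the sign of $\langle\widehat{\partial}w,w\rangle_{V',V}$ and, when it is nonnegative, solving a scalar quadratic — leads to a bound of the form
\[
 \Vert w(0)\Vert_H^2\le\frac{2}{(1-\Vert\Phi\Vert)^2}\Vert w(0)-\Phi w(T)\Vert_H^2+\frac{2}{1-\Vert\Phi\Vert^2}\big(\Vert\widehat{\partial}w\Vert_{V'}^2+\Vert w\Vert_V^2\big).
\]
The boundary mismatch $\Vert w(0)-\Phi w(T)\Vert_H$ — more precisely its $U_n$-orthogonal projection, which is exactly what the test functions $z\in U_n$ see in the boundary part of $b$ — is a detectable quantity, so this once again yields the estimate required by Lemma~\ref{lem:suffbnb}, now with $\beta$ depending only on $\alpha,M,T$ and $\Vert\Phi\Vert$.

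The main obstacle is exactly this loss of coercivity at $\theta=\frac12$ over the oscillating nodal modes, which the $\theta<\frac12$ proof handled automatically through the CFL condition. The delicate point is to check that the contraction defect $1-\Vert\Phi\Vert$ (resp. the mild balance $k\mu_n\le C$) is quantitatively sufficient to close \eqref{eq:condlim}; in the $\Vert\Phi\Vert<1$ case the scalar elimination, and the bookkeeping ensuring that it is the $U_n$-projection of $w(0)-\Phi w(T)$, and not the full mismatch, that enters the estimate, are where care is required.
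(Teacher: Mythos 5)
Your overall architecture is the paper's: derive the exact Crank--Nicolson energy identity $\langle \widehat{\partial}w,w\rangle_{V',V}=\tfrac12(\Vert w(T)\Vert_H^2-\Vert w(0)\Vert_H^2)$, verify \eqref{eq:condlim}, and invoke Lemma~\ref{lem:suffbnb}. Your treatment of the case $\Vert\Phi\Vert=1$ is correct and essentially the paper's argument up to a cosmetic switch: you bound $\Vert w(0)\Vert_H^2$ by $\Vert w\Vert_V^2+\Vert\widehat{\partial}w\Vert_{V'}^2$ (so you lower $\nu$ below $\tfrac12$), whereas the paper bounds $\Vert w(T)\Vert_H^2$ (so it raises $\mu$ above $\tfrac12$); either way $\mu-\nu\Vert\Phi\Vert^2\ge\mu-\nu=\delta>0$ and Lemma~\ref{lem:suffbnb} applies. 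Your identity $w^{(m)}=w(t)-\tfrac k2\partial w(t)$ together with Lemma~\ref{lem:normhnormVp} and the averaging of the energy identity over $m'$ is a clean route to that bound.

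The case $\Vert\Phi\Vert<1$ is where your proposal has a genuine problem of interface. Hypothesis \eqref{eq:condlim} of Lemma~\ref{lem:suffbnb} involves only $\Vert x_3\Vert_H=\Vert w(0)\Vert_H$ and $\Vert x_4\Vert_H=\Vert w(T)\Vert_H$; it contains no boundary-mismatch term. So your inequality bounding $\Vert w(0)\Vert_H^2$ by $\Vert w(0)-\Phi w(T)\Vert_H^2$ plus $\Vert\widehat{\partial}w\Vert_{V'}^2+\Vert w\Vert_V^2$ cannot be ``fed into'' Lemma~\ref{lem:suffbnb} as you claim: it is not of the form \eqref{eq:condlim}, and the projection issue you flag (only $\Vert w(0)-P^H_{3}\Phi w(T)\Vert_H$ is detectable by test functions in $U_n$, and the orthogonal projection only gives $\Vert P^H_3(\cdot)\Vert_H\le\Vert\cdot\Vert_H$, i.e.\ the wrong direction) is left unresolved. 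What you have in fact re-derived, in a special case, is the content of Lemma~\ref{lem:peterpaul2}, which is already built into the proof of Lemma~\ref{lem:suffbnb} precisely to convert ``$\mu-\nu\Vert\Phi\Vert^2\ge\delta>0$ plus the detectable mismatch'' into full control of $\Vert w(0)\Vert_H^2+\Vert w(T)\Vert_H^2$ (the projection difficulty is handled there by replacing $\Phi$ with $P^H_{3}\circ\Phi$, which is still a contraction). The intended argument for $\Vert\Phi\Vert<1$ is therefore a one-liner: the exact energy identity \emph{is} \eqref{eq:condlim} with $\mu=\nu=\tfrac12$ (the nonnegative quadratic term on the left is free), and $\delta=\tfrac12(1-\Vert\Phi\Vert^2)>0$; Lemma~\ref{lem:suffbnb} then yields $\widehat{\beta}$ depending only on $\alpha$, $M$, $\Vert\Phi\Vert$ (and $T$ through \eqref{eq:estimusupbynorminZthetademi}). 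Your route could be completed by redoing the end of the proof of Lemma~\ref{lem:suffbnb} by hand, but as written the key step ``this yields the estimate required by Lemma~\ref{lem:suffbnb}'' is not justified.
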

\begin{proof} 
We have, for $0\le m < m'\le N$,
\begin{multline}\label{eq:ineqestimthetademi}
 \int_{mk}^{m'k} \langle \partial w(t), w(t)\rangle_H{\rm d}t = \sum_{p=m}^{m'-1} k \langle \frac {w^{(p+1)} - w^{(p)}} k, \frac 1 2(w^{(p+1)}+w^{(p)})\rangle_H \\
 = \frac 1 2 \Vert w^{(m')}\Vert_H^2 -\frac 1 2 \Vert w^{(m)}\Vert_H^2.
\end{multline}
Letting $m = 0$ in the previous equation yields
\[
 \frac 1 2 \Vert w^{(m')}\Vert_H^2  \le   \Vert \widehat{\partial} w \Vert_{V'}\Vert w\Vert_{V}+ \frac 1 2 \Vert w(0)\Vert_H^2\hbox{ for }m'=0,\ldots,N.
\]
Using the Young inequality and $\Vert \frac 1 2(a+b)\Vert \le \max(\Vert a\Vert,\Vert b\Vert)$,  and recalling that  $w(t)=\frac{1}{2} (w^{(m')} + w^{(m'+1)})$ for some $m'\in \{ 0,\cdots, N\}$, we find 
\begin{equation}\label{eq:estimusupbynorminZthetademi} 
\sup_{t\in[0,T]}\Vert w(t)\Vert_H  \le  \Big( \Vert \widehat{\partial} w \Vert_{V'}^2 + \Vert  w\Vert_V^2 +\Vert w(0)\Vert_H^2 \Big)^{1/2}\mbox{ for all }w\in W_n.
\end{equation}

From \eqref{eq:ineqestimthetademi}, we get, letting $m= 0$ and $m'=N$,
\begin{equation}\label{eq:ineqestimthetabdemi}
 \frac 1 2 (\Vert w(T)\Vert_H^2  -\Vert w(0)\Vert_H^2)  = \langle \widehat{\partial} w, w\rangle_{V',V}.
\end{equation}

{\bf Case $\Vert\Phi\Vert < 1$.} 

We deduce from \eqref{eq:ineqestimthetabdemi} that

\[
  \langle \widehat{\partial}w,w\rangle_{V',V}+  \frac  {\alpha^2 }{12\, M^3}(\Vert w \Vert_{V}^2+\Vert \widehat{\partial}w \Vert_{V'}^2)
  \ge \frac 1 2\Vert w(T)\Vert_H^2 - \frac 1 2\Vert w(0)\Vert_H^2,
  \hbox{ for all } w\in W_n.
\]
  We then obtain \eqref{eq:condlim} with
$\omega = \frac 1 2$ and $\delta = \frac 1 2 (1 - \Vert\Phi\Vert^2) $ (note that $\omega$ and $\delta$ only depend on $\Vert\Phi\Vert$).
We can now apply Lemma  \ref{lem:suffbnb}, letting $\widehat{X} = \{(\widehat{\partial} w,w,w(0),w(T)): w\in W_n\}$, since the quantities $\widehat{X}_i$ defined in the lemma satisfy $\widehat{X}_2 = V_n$ and $\widehat{X}_3 = U_n$. We get the existence of $\widehat{\beta}$,  only depending on $\alpha$, $M$ and  $\Vert\Phi\Vert$, such that
\[
	 \sup_{\substack{y\in Y_n\\ \Vert y\Vert_Y = 1}} b((\widehat{\partial} w,w),y) \ge \widehat{\beta} \Big( \Vert \widehat{\partial} w \Vert_{V'}^2 + \Vert  w\Vert_V^2 +\Vert w(0)\Vert_H^2 + \Vert w(T)\Vert_H^2\Big)^{1/2}\hbox{ for all }w\in W_n.
\]
Using \eqref{eq:estimusupbynorminZthetademi} we obtain   $\beta>0$ only depending on   $T$, $\alpha$, $M$ and  $\Vert\Phi\Vert$ such that \eqref{eq:bnbeulerimthetademi} holds.

{\bf Case $\Vert\Phi\Vert =1$.}

Letting  $m'=N$ in \eqref{eq:ineqestimthetademi}, we obtain
\[
 \frac k 2 \Vert w(T)\Vert_H^2  \le   k \Vert \widehat{\partial} w \Vert_{V'}\Vert w\Vert_{V}+ \frac k 2 \Vert w^{(m)}\Vert_H^2\hbox{ for }m=0,\ldots,N.
\]
Summing the preceding equation on $m=1,\ldots,N$ and multiplying by 2, we get
\[
  T \Vert w(T)\Vert_H^2  \le T \Vert \widehat{\partial} w \Vert_{V'}^2 + T\Vert w\Vert_{V}^2 + k\sum_{m=1}^{N}\Vert w^{(m)}\Vert_H^2.
\]
We now remark that, using $w^{(m+1)} = \frac 1 2 (w^{(m+1)}+w^{(m)}) + \frac 1 2 (w^{(m+1)}-w^{(m)})$,
\[
 k\sum_{m=1}^{N}\Vert w^{(m)}\Vert_H^2 \le 2\Vert w\Vert_{\mathbb{H}}^2 + \frac k 2\sum_{m=0}^{N-1} \Vert w^{(m+1)}-w^{(m)}\Vert_H^2.
\]
We obtain,  applying \eqref{eq:normhnormVp} in Lemma \ref{lem:normhnormVp},
\[
 k\sum_{m=1}^{N}\Vert w^{(m)}\Vert_H^2 \le 2\Vert w\Vert_{\mathbb{H}}^2 + \frac 1 2k^2 \mu_n^2 \Vert\widehat{\partial} w\Vert_{V'}^2.
\]
We thus obtain
\[
  T \Vert w(T)\Vert_H^2  \le (T +\frac 1 2k^2 \mu_n^2 )\Vert \widehat{\partial} w \Vert_{V'}^2 + T\Vert w\Vert_{V}^2.
\]
Using $k\mu_n \le C$, we get
\[
  T \Vert w(T)\Vert_H^2  \le (T +\frac 1 2C^2 )\Vert \widehat{\partial} w \Vert_{V'}^2 + T\Vert w\Vert_{V}^2.
\]
This leads to the existence of $\ctel{cst:ineqthetademi}>0$, only depending on $T$,  $\alpha$, $C$ and $M$ such that

\[
 \frac  {\alpha^2 }{12\, M^3}(\Vert \widehat{\partial} w \Vert_{V'}^2 + \Vert w\Vert_{V}^2)\ge \cter{cst:ineqthetademi}\Vert w(T)\Vert_H^2.
\]
In addition to \eqref{eq:ineqestimthetabdemi}, we obtain
\[
  \langle \widehat{\partial}w,w\rangle_{V',V}+  \frac  {\alpha^2 }{12\, M^3}(\Vert w \Vert_{V}^2+\Vert \widehat{\partial}w \Vert_{V'}^2)
  \ge (\frac 1 2+ \cter{cst:ineqthetademi})\Vert w(T)\Vert_H^2 - \frac 1 2\Vert w(0)\Vert_H^2,
  \hbox{ for all } w\in W_n.
\]
  We then obtain \eqref{eq:condlim} with
$\omega = \frac 1 2+ \cter{cst:ineqthetademi}$ and $\delta = \cter{cst:ineqthetademi} $ (note that $\omega$ and $\delta$ only depend on $\alpha$, $M$, $T$, $C$ and $C_H$).
We can now apply Lemma  \ref{lem:suffbnb}, letting $\widehat{X} = \{(\widehat{\partial} w,w,w(0),w(T)): w\in W_n\}$, since the quantities $\widehat{X}_i$ defined in the lemma satisfy $\widehat{X}_2 = V_n$ and $\widehat{X}_3 = U_n$. We get the existence of $\widehat{\beta}$,  only depending on $\alpha$, $M$ and $T$  such that
\[
	 \sup_{\substack{y\in Y_n\\ \Vert y\Vert_Y = 1}} b((\widehat{\partial} w,w),y) \ge \widehat{\beta} \Big( \Vert \widehat{\partial} w \Vert_{V'}^2 + \Vert  w\Vert_V^2 +\Vert w(0)\Vert_H^2 + \Vert w(T)\Vert_H^2\Big)^{1/2}\hbox{ for all }w\in W_n.
\]
Using \eqref{eq:estimusupbynorminZthetademi} we obtain  $\beta$ (with the same dependencies) such that \eqref{eq:bnbeulerimthetademi} holds.  

\end{proof}
\begin{rem}\label{rem:4.5}
	
	 We cannot expect a BNB condition if $\theta=\frac 1 2$ and $\Vert\Phi\Vert = 1$ without further assumptions. In fact, if $\theta = \frac 1 2$ and $\Vert\Phi\Vert=1$, a BNB condition implies a CFL condition. Indeed, let $\Phi = {\rm Id}$, $T = 1$, $N\neq 0$ an even integer. Let $u\in U_n$ and let $w^{(m)} = (-1)^{m} k u$. Define $w\in W_n$ by \eqref{eq:schememtheta}. Since $\Phi w(T) = w(0)$ and $w(t)=0$ for a.e. $t$, we seek $(y,z)\in Y_n$ with $\Vert(y,z)\Vert_Y = 1$ providing the maximum value for $b((\widehat{\partial}w,w),(y,z)$. We have 
 \[
  b((\widehat{\partial}w,w),(y,z)) = \sum_{m=0}^{N-1} k \langle (-1)^{m+1}2 u,y^{(m)}\rangle_H.
 \]
 The maximum value is obtained for $y^{(m)} = (-1)^{m+1} \alpha u$ and  $z = 0$ where $\alpha = 1/\Vert u\Vert_U$. Then
 \[
  b((\widehat{\partial}w,w),(y,z)) = \sum_{m=0}^{N-1} k \langle (-1)^{m+1} 2 u,y^{(m)}\rangle_H = 2 \frac {\Vert u\Vert_H^2} {\Vert u\Vert_U}.
 \]
 The inequality
 \[
  \beta \sup_{t\in[0,T]} \Vert w(t)\Vert_H = \beta k \Vert u\Vert_H\le 2 \frac {\Vert u\Vert_H^2} {\Vert u\Vert_U}
 \]
implies
 \[
  \beta k \|u\|_H\le 2  \frac{\|u\|^2_H}{\Vert u\Vert_U}.
 \]
 Then we deduce that 
 \[  k\mu_n \leq \frac{2}{\beta}.\]
 Thus, the condition considered in Lemma~\ref{lem:estimnoruTHthetademi} in the case $\|\Phi\|=1$ is fulfilled for $C=\frac{2}{\beta}$.
\end{rem}

\begin{lem}\label{lem:estimnoruTHthetaim}  
 
Let $\theta\in (\frac 1 2,1]$. There exists a constant $\beta>0$, only depending on $\alpha$, $M$, $T$, $\theta$ and $C_H$, such that 
	\begin{equation}\label{eq:bnbeulerimtheta}
	 \sup_{\substack{y\in Y_n\\ \Vert y\Vert_Y = 1}} b(x,y) \ge \beta \Vert x\Vert_Z \hbox{ for all }x\in X_n.
	\end{equation}
 \end{lem}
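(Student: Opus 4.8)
The plan is to follow the same two-step strategy as in the proofs of Lemmas~\ref{lem:estimnoruTHthetaex} and~\ref{lem:estimnoruTHthetademi}: first establish a coercivity estimate of the form \eqref{eq:condlim} for the quadruple $(\widehat{\partial}w,w,w(0),w(T))$, and then invoke Lemma~\ref{lem:suffbnb} together with a sup-in-time bound to pass to \eqref{eq:bnbeulerimtheta}. The starting point is again the exact energy identity \eqref{eq:ineqestimtheta}, valid for every $\theta\in[0,1]$. The decisive feature of the range $\theta\in(\frac12,1]$ is that the term $(\theta-\frac12)\sum_{p}\Vert w^{(p+1)}-w^{(p)}\Vert_H^2$ now carries a \emph{nonnegative} sign, in contrast to the case $\theta<\frac12$, where it forced the CFL condition \eqref{eq:cfltheta}.

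First I would record the two facts that come for free by discarding this nonnegative term in the favourable direction. Taking $m=0$ in \eqref{eq:ineqestimtheta} and using the convexity $w(t)=\theta w^{(m+1)}+(1-\theta)w^{(m)}$ yields the sup-in-time estimate \eqref{eq:estimusupbynorminZthetademi} exactly as in the case $\theta=\frac12$, that is $\sup_{t}\Vert w(t)\Vert_H\le(\Vert\widehat{\partial}w\Vert_{V'}^2+\Vert w\Vert_V^2+\Vert w(0)\Vert_H^2)^{1/2}$; and taking $m=0$, $m'=N$ gives the energy inequality $\frac12(\Vert w(T)\Vert_H^2-\Vert w(0)\Vert_H^2)\le\langle\widehat{\partial}w,w\rangle_{V',V}$, with no condition on $k$.

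The heart of the matter, and the main obstacle, is to control $\Vert w(T)\Vert_H$ by the norms \emph{without} a CFL condition, thereby replacing the estimate that, in the case $\theta=\frac12$, $\Vert\Phi\Vert=1$, rested on the hypothesis $k\mu_n\le C$ via \eqref{eq:normhnormVp}. Here the positive sign is used quantitatively: inverting \eqref{eq:ineqestimtheta} (with $m=0$, $m'=N$) expresses the increment sum as
\[
\sum_{p=0}^{N-1}\Vert w^{(p+1)}-w^{(p)}\Vert_H^2=\frac{1}{\theta-\frac12}\Big(\langle\widehat{\partial}w,w\rangle_{V',V}-\tfrac12\Vert w(T)\Vert_H^2+\tfrac12\Vert w(0)\Vert_H^2\Big),
\]
so that after multiplication by $k\le T$ the quantity $k\sum_p\Vert w^{(p+1)}-w^{(p)}\Vert_H^2$ is bounded by $\langle\widehat{\partial}w,w\rangle_{V',V}$ (hence by the norms) and the boundary values, with \emph{no} dependence on $\mu_n$. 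Feeding this into the inequality obtained from \eqref{eq:ineqestimtheta} with $m'=N$ and summed over $m$, namely $T\Vert w(T)\Vert_H^2\le 2T\Vert\widehat{\partial}w\Vert_{V'}\Vert w\Vert_V+k\sum_m\Vert w^{(m)}\Vert_H^2$, and using $\Vert w^{(m)}\Vert_H^2\le 2\Vert w(t)\Vert_H^2+2\theta^2\Vert w^{(m+1)}-w^{(m)}\Vert_H^2$ together with $\Vert w\Vert_{\mathbb H}^2\le C_H^2\Vert w\Vert_V^2$, the copies of $\Vert w(T)\Vert_H^2$ on the two sides (the one on the right having the good sign, coming from $-\frac12\Vert w(T)\Vert_H^2$ above) can be absorbed. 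This gives
\[
\Vert w(T)\Vert_H^2\le C\big(\Vert\widehat{\partial}w\Vert_{V'}^2+\Vert w\Vert_V^2\big)+\gamma_0\Vert w(0)\Vert_H^2,
\]
where $\gamma_0=\frac{\theta^2}{\theta^2+\theta-\frac12}$ and $C$ depends only on $\alpha$, $M$, $T$, $\theta$, $C_H$; the strict inequality $\gamma_0<1$ is exactly what $\theta>\frac12$ buys, and it is uniform in $n$ and $N$.

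Finally I would combine the energy inequality with this trace-type estimate: there is $\kappa>0$ (same dependencies) with $\frac{\alpha^2}{12M^3}(\Vert\widehat{\partial}w\Vert_{V'}^2+\Vert w\Vert_V^2)\ge\kappa\big(\Vert w(T)\Vert_H^2-\gamma_0\Vert w(0)\Vert_H^2\big)$, whence
\[
\langle\widehat{\partial}w,w\rangle_{V',V}+\frac{\alpha^2}{12M^3}\big(\Vert\widehat{\partial}w\Vert_{V'}^2+\Vert w\Vert_V^2\big)\ge(\tfrac12+\kappa)\Vert w(T)\Vert_H^2-(\tfrac12+\kappa\gamma_0)\Vert w(0)\Vert_H^2 .
\]
This is \eqref{eq:condlim} with $\omega=\frac12+\kappa$ and $\delta=\kappa(1-\gamma_0)>0$; since $\gamma_0<1$, this $\delta$ is bounded below independently of the contraction $\Phi$ (in particular it survives $\Vert\Phi\Vert=1$) and independently of $n$ and $N$, which is precisely why no restriction on $\Phi$ and no CFL condition are needed here, unlike in Lemma~\ref{lem:estimnoruTHthetademi}. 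Applying Lemma~\ref{lem:suffbnb} with $\widehat{X}=\{(\widehat{\partial}w,w,w(0),w(T)):w\in W_n\}$ and then \eqref{eq:estimusupbynorminZthetademi} yields \eqref{eq:bnbeulerimtheta} with $\beta$ depending only on $\alpha$, $M$, $T$, $\theta$ and $C_H$.
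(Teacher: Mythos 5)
Your proposal is correct and follows essentially the same route as the paper: the same energy identity, the same sup-in-time bound and energy inequality obtained by exploiting the sign of $(\theta-\frac12)\sum_p\Vert w^{(p+1)}-w^{(p)}\Vert_H^2$, a CFL-free control of $\Vert w(T)\Vert_H$ with a coefficient of $\Vert w(0)\Vert_H^2$ strictly less than one, and then Lemma~\ref{lem:suffbnb}. The only (harmless) variation is in the intermediate bound on $k\sum_m\Vert w^{(m)}\Vert_H^2$: you recover the increment sum from the energy identity and absorb it, whereas the paper uses the direct inequality $\Vert\theta w^{(m+1)}+(1-\theta)w^{(m)}\Vert_H^2\ge(2\theta-1)^2\Vert w^{(m)}\Vert_H^2+\dots$; both hinge identically on $2\theta-1>0$.
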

\begin{proof} 
We have, for $0\le m < m'\le N$,
\begin{multline}\label{eq:ineqestimthetaim}
 \int_{mk}^{m'k} \langle \partial w(t), w(t)\rangle_H{\rm d}t = \sum_{p=m}^{m'-1} k \langle \frac {w^{(p+1)} - w^{(p)}} k,\theta w^{(p+1)}+(1-\theta)w^{(p)}\rangle_H \\
 = \frac 1 2 \Vert w^{(m')}\Vert_H^2 +(\theta-\frac 1 2)\sum_{p=m}^{m'-1} \Vert w^{(p+1)} - w^{(p)}\Vert_H^2 -\frac 1 2 \Vert w^{(m)}\Vert_H^2.
\end{multline}
Letting $m = 0$ in the previous equation provides, since $\theta\ge\frac 1 2$,
\[
 \frac 1 2 \Vert w^{(m')}\Vert_H^2  \le   \Vert \widehat{\partial} w \Vert_{V'}\Vert w\Vert_{V}+ \frac 1 2 \Vert w(0)\Vert_H^2\hbox{ for }m'=0,\ldots,N.
\]
Using the Young and the triangle inequalities, this gives 
\begin{equation}\label{eq:estimusupbynorminZthetaim} 
\sup_{t\in[0,T]}\Vert w(t)\Vert_H  \le \Big( \Vert \widehat{\partial} w \Vert_{V'}^2 + \Vert  w\Vert_V^2 +\Vert w(0)\Vert_H^2 \Big)^{1/2}\mbox{ for all }w\in W_n.
\end{equation}

From \eqref{eq:ineqestimthetaim}, we get, letting $m= 0$ and $m'=N$,
\begin{equation}\label{eq:ineqestimthetabim}
 \frac 1 2 (\Vert w(T)\Vert_H^2  -\Vert w(0)\Vert_H^2)  \le  \langle \widehat{\partial} w, w\rangle_{V',V}.
\end{equation}

We notice that, in the case $\Vert\Phi\Vert < 1$ we could conclude as in the proof of Lemma \ref{lem:estimnoruTHthetademi}. But our aim is to handle the general case $\Vert\Phi\Vert \le 1$.

Letting  $m'=N$ in \eqref{eq:ineqestimthetaim}, we obtain
\[
 \frac k 2 \Vert w(T)\Vert_H^2  \le   k \Vert \widehat{\partial} w \Vert_{V'}\Vert w\Vert_{V}+ \frac k 2 \Vert w^{(m)}\Vert_H^2\hbox{ for }m=0,\ldots,N.
\]
Summing the preceding equation on $m=1,\ldots,N$ and multiplying by 2, we get
\[
  T \Vert w(T)\Vert_H^2  \le 2T \Vert \widehat{\partial} w \Vert_{V'}\Vert w\Vert_{V} + k\sum_{m=1}^{N}\Vert w^{(m)}\Vert_H^2.
\]
We again remark that
\begin{multline*}
 \Vert w\Vert_{\mathbb{H}}^2 = \sum_{m=0}^{N-1}k \Vert\theta w^{(m+1)}+(1-\theta)w^{(m)}\Vert_H^2\\
 \ge (2\theta-1)^2\sum_{m=1}^{N}k \Vert w^{(m)}\Vert_H^2  + (1-\theta)(2\theta-1)k(\Vert w(T)\Vert_H^2- \Vert w(0)\Vert_H^2).
\end{multline*}
The two preceding relations give
\[
 \frac {C_H^2}{(2\theta-1)^2}\Vert w\Vert_{V}^2 
    +  T (\Vert \widehat{\partial} w \Vert_{V'}^2 + \Vert w\Vert_{V}^2)\ge (T+\frac {1-\theta}{2\theta-1}k) \Vert w(T)\Vert_H^2 - \frac {1-\theta}{2\theta-1}k\Vert w(0)\Vert_H^2.
\]
This leads to the existence of $\ctel{cst:ineqthetaim}>0$, only depending on $T$, $C_H$, $\alpha$ and $M$ such that

\[
 \frac  {\alpha^2 }{12\, M^3}(\Vert \widehat{\partial} w \Vert_{V'}^2 + \Vert w\Vert_{V}^2)\ge \cter{cst:ineqthetaim}\Big( (1+\frac {1-\theta}{2\theta-1}\frac k T)\Vert w(T)\Vert_H^2 -\frac {1-\theta}{2\theta-1}\frac k T\Vert w(0)\Vert_H^2\Big).
\]
In addition to \eqref{eq:ineqestimthetab}, we obtain
\[
  \langle \widehat{\partial}w,w\rangle_{V',V}+  \frac  {\alpha^2 }{12\, M^3}(\Vert w \Vert_{V}^2+\Vert \widehat{\partial}w \Vert_{V'}^2)
  \ge \mu \Vert w(T)\Vert_H^2 - \nu\Vert w(0)\Vert_H^2,
  \hbox{ for all } w\in W_n,
\]
with
\[
 \mu = \frac {1} 2 + \cter{cst:ineqthetaim} (1+\frac {1-\theta}{2\theta-1}\frac k T)\hbox{ and }\nu = \frac {1} 2 + \cter{cst:ineqthetaim}\frac {1-\theta}{2\theta-1}\frac k T.
\]

  We then obtain \eqref{eq:condlim} with
$\omega = \frac 1 2+ \cter{cst:ineqthetaim} (1+\frac {1-\theta}{2\theta-1})$ and $\delta = \cter{cst:ineqthetaim} $ (note that $\omega$ and $\delta$ only depend on $\alpha$, $M$, $T$, $\theta$ and $C_H$).
We can now apply Lemma  \ref{lem:suffbnb}, letting $\widehat{X} = \{(\widehat{\partial} w,w,w(0),w(T)): w\in W_n\}$, since the quantities $\widehat{X}_i$ defined in the lemma satisfy $Y_n = \widehat{X}_2\times \widehat{X}_3$. We get the existence of $\widehat{\beta}$,  only depending on $\alpha$, $M$, $T$, $\theta$ and $C_H$, such that
\[
	 \sup_{\substack{y\in Y_n\\ \Vert y\Vert_Y = 1}} b((\widehat{\partial} w,w),y) \ge \widehat{\beta} \Big( \Vert \widehat{\partial} w \Vert_{V'}^2 + \Vert  w\Vert_V^2 +\Vert w(0)\Vert_H^2 + \Vert w(T)\Vert_H^2\Big)^{1/2}\hbox{ for all }w\in W_n.
\]
Using \eqref{eq:estimusupbynorminZthetaim} we obtain the constant $\beta$  (with the same dependencies) such that \eqref{eq:bnbeulerimtheta} holds. 

\end{proof}

\begin{rem}~
	\begin{enumerate}
		\item[(a)]   Note that $\beta$ does not depend on $N$ or on $U_n$. This will be important when we discuss convergence.
		\item[(b)] Of course in the three previous  lemmas establishing the BNB-condition, there exists an optimal $\beta>0$. Since the continuity constant of $b$ is $2M$, an upper limit of $\beta$ is $2M$.     
	\end{enumerate}

\end{rem}

\subsection{Conclusion: optimal error estimate}\label{sub:4.2err}

 We now obtain  the following error estimate for a solution $u$ of Problem~\eqref{eq:5.1}. Only later we will deduce from our estimates that a unique solution of \eqref{eq:5.1} exists. 
\begin{thm}\label{thm:erresttheta} Let $\theta\in [0,+\infty)$ be given.  Let $U_n\subset U$ be a finite dimensional subspace of $U$ and let $N\in\mathbb{N}$.
In the case where $\theta\in[0,\frac 1 2)$ assume that $N$ is so large that the CFL-condition \eqref{eq:cfltheta} is satisfied. 

Then for each $f\in V'$ and $\xi_0\in H$, the $\theta-$scheme \eqref{eq:schememextheta} has a unique solution to which we associate the step-function $w_n\in W_n$ via \eqref{eq:schememtheta}. Moreover, there exists $\beta>0$ such that
	\begin{multline}\label{eq:erresttheta}
		\Big(\Vert u' - \widehat{\partial}w_n\Vert_{V'}^2 +  \Vert u - w_n\Vert_{V}^2 + \sup_{t\in[0,T]} \Vert u(t) - w_n(t)\Vert_{H}^2\Big)^{1/2} \le \\
(1 + \frac {2M}{\beta})  \inf\{\Big(\Vert u' - \widehat{\partial}w\Vert_{V'}^2 +  \Vert u - w\Vert_{V}^2 + \sup_{t\in[0,T]} \Vert u(t) - w(t)\Vert_{H}^2\Big)^{1/2}: w\in W_n\},
	\end{multline}
where $u$ is a solution of \eqref{eq:5.1}.
The constant $\beta>0$ may be chosen depending only on
\begin{itemize}
		\item [a)] $\theta$, $M$, $\alpha$, $T$ and $C_H$ if $0\le \theta<\frac 1 2$,
		\item [b)] $M$, $\alpha$, $T$ and $C$ if $\theta = \frac 1 2$ and $\Vert\Phi\Vert = 1$ where $C>0$ is a constant such that $k\mu_n\le C$,
		\item [c)] $M$, $\alpha$, $T$ and $\Vert\Phi\Vert$ if $\theta = \frac 1 2$ and $\Vert\Phi\Vert < 1$,
		\item [d)] $\theta$, $M$, $\alpha$, $T$ and $C_H$ if $\frac 1 2 < \theta\le 1$.
	\end{itemize}
	
	\end{thm}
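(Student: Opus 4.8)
The plan is to read this statement as a direct specialization of the abstract results of Section~\ref{sec:NCBNB} to the concrete data $(X_n, Y_n, b, L)$ constructed in Sections~\ref{sec:extended} and~\ref{sub:4.1}. Two structural facts are already in hand and should be recorded first. By Lemma~\ref{lem:schemeexpabs}, the $\theta$-scheme \eqref{eq:schemeiniex}--\eqref{eq:schememextheta} is equivalent, via the bijection $w \mapsto (\widehat{\partial} w, w)$ from $W_n$ onto $X_n$, to the discrete variational problem \eqref{eq:schemeinZY}, that is, to \eqref{eq:pbinXn} for the pair $(X_n, Y_n)$. Moreover, the computation following \eqref{eq:defb} shows that $b$ is continuous on $Z \times Y$ with constant $2M$; it is this constant, and not $M$, that enters Lemma~\ref{lem:errest} and hence produces the factor $1 + \frac{2M}{\beta}$ in \eqref{eq:erresttheta}.

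Next I would secure the BNB condition \eqref{eq:betabnbn} for $(X_n, Y_n)$, splitting into exactly the cases that correspond to the dependency lists a)--d). For $\theta \in [0, \frac12)$, under the CFL condition \eqref{eq:cfltheta}, this is Lemma~\ref{lem:estimnoruTHthetaex} (case a)); for $\theta = \frac12$ it is Lemma~\ref{lem:estimnoruTHthetademi}, whose subcase $\Vert \Phi \Vert = 1$ (requiring $k\mu_n \le C$) gives case b) and whose subcase $\Vert \Phi \Vert < 1$ gives case c); for $\theta \in (\frac12, 1]$ it is Lemma~\ref{lem:estimnoruTHthetaim} (case d)). In every instance these lemmas provide a constant $\beta \in (0, 2M]$ with precisely the stated dependencies. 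Because $X_n$ and $Y_n$ have the same finite dimension, \eqref{eq:betabnbn} makes the linear map $x \mapsto b(x, \cdot)\vert_{Y_n}$ from $X_n$ into $Y_n'$ injective, hence bijective; thus \eqref{eq:schemeinZY} admits a unique solution $x_n = (\widehat{\partial} w_n, w_n)$, which by the bijection and Lemma~\ref{lem:schemeexpabs} furnishes the asserted unique $\theta$-scheme solution and its associated $w_n \in W_n$.

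It remains to apply Lemma~\ref{lem:errest}. A given solution $u$ of \eqref{eq:5.1} produces $\widehat{x} = (u', u)$, and since $u \in W \subset G$ one has $\widehat{x} \in Z$; by the equivalence of \eqref{eq:5.1} with \eqref{eq:pbcontinZ}, this $\widehat{x}$ solves \eqref{eq:pbinZ} for the same $L$. In particular no existence theorem for the continuous problem is needed at this stage --- the estimate is asserted for any solution $u$. As $Z = V' \times G$ is not a Hilbert space, I would use the Banach estimate \eqref{eq:errestchixbanach} with continuity constant $2M$, obtaining
\[
 \Vert x_n - \widehat{x} \Vert_Z \le \Big(1 + \frac{2M}{\beta}\Big) \min_{x \in X_n} \Vert x - \widehat{x} \Vert_Z.
\]
Unfolding the $Z$-norm through \eqref{eq:defyz} and the $G$-norm through \eqref{eq:defnorG}, every occurrence of $\Vert x - \widehat{x} \Vert_Z^2$, both for $x = x_n$ on the left and for a general $x = (\widehat{\partial} w, w)$ with $w \in W_n$ inside the minimum, equals $\Vert \widehat{\partial} w - u' \Vert_{V'}^2 + \Vert w - u \Vert_V^2 + \sup_{t \in [0,T]} \Vert w(t) - u(t) \Vert_H^2$; this is exactly \eqref{eq:erresttheta}.

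The substantial difficulty of the whole development sits upstream, inside the three BNB lemmas --- notably the emergence of the CFL condition \eqref{eq:cfltheta} for $\theta < \frac12$ and the role of the norm-comparison constant $\mu_n$ from \eqref{eq:defmuh} --- rather than in the present assembly. Here the only delicate points are bookkeeping: carrying the correct continuity constant $2M$ into Lemma~\ref{lem:errest}, unfolding the $G$-norm so that the supremum-in-time term of \eqref{eq:erresttheta} appears, and reading the four dependency lists a)--d) off the matching BNB lemmas.
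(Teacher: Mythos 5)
Your proposal is correct and follows essentially the same route as the paper's own proof: invoke Lemmas~\ref{lem:estimnoruTHthetaex}, \ref{lem:estimnoruTHthetademi} and \ref{lem:estimnoruTHthetaim} for the BNB constant $\beta$ in the respective cases, use equality of dimensions for unique solvability of the discrete problem via Lemma~\ref{lem:schemeexpabs}, and apply the Banach-space estimate \eqref{eq:errestchixbanach} of Lemma~\ref{lem:errest} with continuity constant $2M$, unfolding the $Z$-norm to obtain \eqref{eq:erresttheta}. Your additional remarks (that no existence result for the continuous problem is needed here, and that the factor is $1+\frac{2M}{\beta}$ rather than $1+\frac{M}{\beta}$ because the continuity constant of $b$ is $2M$) match the paper exactly.
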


 \begin{proof}
 Recall the definitions of $Z$ and $Y$ given in Section \ref{sec:extended}. Choose $\beta>0$, as given by Lemma \ref{lem:estimnoruTHthetaex} in the case $\theta\in[0,\frac 1 2)$, by Lemma \ref{lem:estimnoruTHthetademi} in the case $\theta =\frac 1 2$ and by Lemma \ref{lem:estimnoruTHthetaim} for $\theta\ge \frac 1 2$.  Thus $b$ satisfies the BNB-condition \eqref{eq:betabnbn}. Since ${\rm dim}X_n ={\rm dim}Y_n <\infty$, Problem \eqref{eq:schemeinZY} has a unique solution $w_n\in W_n$, where $w_n$ is given via \eqref{eq:schememtheta} from $(w^{(m)})_{m=0,\ldots,N}\subset U_n$ satisfying \eqref{eq:schemeiniex} and \eqref{eq:schememextheta}, see Lemma \ref{lem:schemeexpabs}. Recall from \eqref{eq:defb} that the continuity constant of $b$ is $2M$.

 Recalling that
 \[
  \vert (z_1,z_2)\Vert_Z = \Big(\Vert z_1\Vert_{V'}^2 +  \Vert z_2\Vert_{V}^2 +  \sup_{t\in[0,T]} \Vert z_2(t)\Vert_{H}^2\Big)^{1/2},
 \]
 we see that Lemma \ref{lem:errest} implies \eqref{eq:erresttheta}.
 \end{proof}

\subsection{Interpolation results}\label{sub:4.3}

 In this subsection we estimate the distance in $Z$ of a given function $w\in W$ from $W_n$ (recall that $W_n$ is not a subspace of $W$).

 Let $\theta\in [0,1]$ be given. 
  
 For any $w\in H^1(0,T;U)$, we define an interpolation of $w$ in $W_n$ by 
 \begin{equation}\label{eq:definterpex}
 w_n\in W_n\mbox{ defined by \eqref{eq:schememtheta} using the sequence }  (P^U_n w(mk))_{m=0,\ldots,N},
 \end{equation}
 where we recall that we denote by $P^U_n~:~U\to U_n$ the orthogonal projection on $U_n$ in $U$.
Let $\delta_n: V\to [0,+\infty)$ be defined by
 \begin{equation}\label{eq:definterVex}
\delta_n(v) :=  \Vert v - P^U_n\circ v\Vert_V = \left(\int_0^T \Vert v(t) - P^U_n v(t)\Vert_U^2{\rm d}t\right)^{1/2}\mbox{ for all }v\in V.
 \end{equation}
 
 We start by a simple lemma whose proof can be omitted.
 
 \begin{lem}\label{lem:rieszUV}
  Let $R^U:U'\to U$ (resp. $R^V:V'\to V$) be the Riesz isomorphism, given by $\langle r,u\rangle_{U',U} = \langle R^U r,u\rangle_{U}$ for all $(r,u)\in U'\times U$ (resp.  $\langle f,v\rangle_{V',V} = \langle R^V f,v\rangle_{V}$ for all $(f,v)\in V'\times V$). Then 
  \[
   \langle R^V f,v\rangle_{V} = \int_0^T \langle R^U f(t),v(t)\rangle_{U}{\rm d}t.
  \]
  Using the Gelfand triple embeddings, we have
  \[
    \langle R^V w,v\rangle_{V} =  \langle w,v\rangle_{\mathbb{H}} \hbox{ for all }v,w\in V.
  \]
  In the case where $w\in H^1(0,T;U)$, we have $R^V w\in H^1(0,T;U)$ with $(R^V w)' = R^V( w')$.
 \end{lem}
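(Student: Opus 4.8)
The plan is to establish the three assertions in turn, the crucial one being the pointwise identification of $R^V$, from which the other two follow quickly. First I would identify the Riesz representative $R^V f$ pointwise, using two ingredients: the canonical identification $V'=L^2(0,T;U')$ with duality pairing $\langle f,v\rangle_{V',V}=\int_0^T\langle f(t),v(t)\rangle_{U',U}\,{\rm d}t$, and the fact that $R^U$ is an isometric isomorphism of $U'$ onto $U$. Setting $g(t):=R^U f(t)$, the isometry property makes $g$ strongly measurable with $\int_0^T\|g(t)\|_U^2\,{\rm d}t=\int_0^T\|f(t)\|_{U'}^2\,{\rm d}t<\infty$, so $g\in V$, and for every $v\in V$,
\[
\langle g,v\rangle_V=\int_0^T\langle R^U f(t),v(t)\rangle_U\,{\rm d}t=\int_0^T\langle f(t),v(t)\rangle_{U',U}\,{\rm d}t=\langle f,v\rangle_{V',V}.
\]
Uniqueness of the Riesz representative in the Hilbert space $V$ then forces $(R^V f)(t)=R^U f(t)$ for a.e.\ $t$, which is the first displayed identity.

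The second identity is merely the definition of $R^V$ read through the Gelfand triple $V\hookrightarrow\mathbb{H}\hookrightarrow V'$: when $w\in V$ is regarded as an element of $V'$ its action is $\langle w,v\rangle_{V',V}=\langle w,v\rangle_{\mathbb{H}}$, whence $\langle R^V w,v\rangle_V=\langle w,v\rangle_{V',V}=\langle w,v\rangle_{\mathbb{H}}$ for all $v\in V$. For the statement on $H^1(0,T;U)$ I would combine the two preceding steps. Writing $\iota_U:U\to U'$ for the Gelfand embedding and $S:=R^U\circ\iota_U\in\mathcal{L}(U)$ (so that $\langle Su,v\rangle_U=\langle u,v\rangle_H$ for all $u,v\in U$), the first step shows that $R^V$ acts pointwise by $S$, i.e.\ $(R^V w)(t)=S\,w(t)$ and $(R^V w')(t)=S\,w'(t)$ once $w,w'$ are viewed in $V'$ through the embedding. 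Since a bounded linear operator commutes with the distributional derivative—test against $\varphi\in C^1_c((0,T))$ and pull $S$ out of the Bochner integral—one gets $(S\circ w)'=S\circ w'$ in $V$ whenever $w\in H^1(0,T;U)$; hence $R^V w=S\circ w\in H^1(0,T;U)$ with $(R^V w)'=S\circ w'=R^V(w')$.

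The computations are routine, and the only point needing care is the first step: the pointwise application of the isometry $R^U$—which delivers both measurability and the $L^2$-bound—together with the canonical identification $V'=L^2(0,T;U')$, which is the place where the Hilbert structure of $U$ enters.
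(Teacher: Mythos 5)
Your proof is correct; the paper itself omits the proof of this lemma as routine (``a simple lemma whose proof can be omitted''), and the argument you give---pointwise identification of $R^V$ via the isometry $R^U$ and uniqueness of the Riesz representative, followed by reading the second identity off the Gelfand embedding and commuting the bounded operator $S=R^U\circ\iota_U$ with the distributional derivative---is exactly the standard one the authors intend.
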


 We then have the following lemma.

 \begin{lem}\label{lem:interpinVex} 
 Let $w\in H^1(0,T;U)$ and let $w_n\in W_n$ be defined by \eqref{eq:definterpex}.
 Then 
 \[
   \Vert w - P^V_n w \Vert_V\le \Vert w - w_n\Vert_{V} \le \delta_n(w) +  k  \Vert w'\Vert_{V}
 \]
 \end{lem}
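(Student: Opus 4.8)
The plan is to prove the two inequalities separately: the left-hand one is a pure orthogonality fact, and the right-hand one is a standard interpolation-error estimate adapted to the projection $P^U_n$.

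For the left inequality, I would first observe that the interpolant $w_n$, viewed as an element of $V$, is piecewise constant in time with values in $U_n$; that is, $w_n\in V_n$ (compare \eqref{eq:schememtheta} and \eqref{eq:defVn}). Since $P^V_n$ is the orthogonal projection of $V=L^2(0,T;U)$ onto $V_n$, the element $P^V_n w$ is the best $V$-approximation of $w$ in $V_n$, so that $\Vert w-P^V_n w\Vert_V\le\Vert w-v\Vert_V$ for every $v\in V_n$. Applying this with $v=w_n$ immediately gives $\Vert w-P^V_n w\Vert_V\le\Vert w-w_n\Vert_V$.

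For the right inequality, the key idea is to compare $w_n$ with the pointwise projection $\Pi w:=P^U_n\circ w$, splitting $w-w_n=(w-\Pi w)+(\Pi w-w_n)$. The first term contributes exactly $\delta_n(w)$ by the definition \eqref{eq:definterVex}. For the second, I would use that on each interval $(mk,(m+1)k)$ the value of $w_n$ equals $P^U_n\big(\theta w((m+1)k)+(1-\theta)w(mk)\big)$, by linearity of $P^U_n$ and the definition \eqref{eq:definterpex}, so that $\Pi w(t)-w_n(t)=P^U_n\big(w(t)-\theta w((m+1)k)-(1-\theta)w(mk)\big)$. Since $P^U_n$ is a contraction on $U$, and $w\in H^1(0,T;U)$ is absolutely continuous with $w(t)-w(s)=\int_s^t w'(r)\,{\rm d}r$, I would bound the pointwise $U$-norm of $\Pi w(t)-w_n(t)$ by $\int_{mk}^{(m+1)k}\Vert w'(r)\Vert_U\,{\rm d}r$, merging the two increments via $\theta+(1-\theta)=1$. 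Squaring this constant-in-$t$ bound, integrating over $(mk,(m+1)k)$, and applying the Cauchy--Schwarz inequality yields $\int_{mk}^{(m+1)k}\Vert \Pi w-w_n\Vert_U^2\,{\rm d}t\le k^2\int_{mk}^{(m+1)k}\Vert w'\Vert_U^2\,{\rm d}r$; summing over $m=0,\dots,N-1$ gives $\Vert \Pi w-w_n\Vert_V\le k\Vert w'\Vert_V$, and the triangle inequality then delivers $\Vert w-w_n\Vert_V\le\delta_n(w)+k\Vert w'\Vert_V$.

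There is no genuine obstacle here; the estimate is elementary once one recognizes that $w_n$ is simply $P^U_n$ applied to the $\theta$-weighted nodal interpolant of $w$. The only points requiring slight care are the use of the contraction property $\Vert P^U_n u\Vert_U\le\Vert u\Vert_U$, so that the projection does not enlarge the increments, and the Cauchy--Schwarz step converting the $L^1$-in-time bound on $w'$ into the $L^2$-norm $\Vert w'\Vert_V$, which is precisely what produces the factor $k$ rather than $\sqrt{k}$.
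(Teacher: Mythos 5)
Your proposal is correct and follows essentially the same route as the paper: the splitting $w-w_n=(w-P^U_n\circ w)+(P^U_n\circ w-w_n)$, the bound of the second term on each interval by $\int_{mk}^{(m+1)k}\Vert w'(s)\Vert_U\,{\rm d}s$ using linearity and the contraction property of $P^U_n$ together with $w(t)-w(s)=\int_s^t w'(r)\,{\rm d}r$, and the Cauchy--Schwarz step producing the factor $k$. The paper leaves the left inequality implicit (best approximation by the orthogonal projection onto $V_n$, since $w_n\in V_n$), which you spell out; otherwise the arguments coincide.
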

 \begin{proof}
We can write
\[
 \Vert w - w_n\Vert_{V}\le \Vert w - P^U_n\circ w\Vert_{V}+\Vert P^U_n\circ  w - w_n\Vert_{V}.
\]
We have, for all $m=0,\ldots,N-1$ and for a.e. $t\in(mk,(m+1)k)$,
\begin{multline*}
 \Vert P^U_n w(t) - w_n(t)\Vert_{U} = \Vert P^U_n w(t) - (1-\theta) P^U_n w(mk) -\theta P^U_n w((m+1)k)\Vert_{U}\\
 \le (1-\theta)\Vert w(t) - w(mk)\Vert_{U}+\theta\Vert w(t) - w((m+1)k)\Vert_{U} \\
 = (1-\theta)\Vert\int_{mk}^{t} w'(s){\rm d}s\Vert_U +\theta\Vert\int_{t}^{(m+1)k} w'(s){\rm d}s\Vert_U\le \int_{mk}^{(m+1)k} \Vert w'(s)\Vert_U{\rm d}s.
\end{multline*}
Note that $P^U_n w(t) - w_n(t)=0$ for $t = mk$ and $t = (m+1)k$.
This yields, owing to the Cauchy-Schwarz inequality,
\[
 \Vert P^U_n w(t) - w_n(t)\Vert_{U}^2 \le k \int_{mk}^{(m+1)k} \Vert w'(s)\Vert_U^2{\rm d}s,
\]
and therefore
\[
  \int_{mk}^{(m+1)k}\Vert P^U_n w(t) - w_n(t)\Vert_{U}^2{\rm d}t  \le k^2 \int_{mk}^{(m+1)k} \Vert w'(s)\Vert_U^2{\rm d}s,
\]
which concludes the proof of the lemma.
 \end{proof}

 \begin{lem}\label{lem:cvschemeex}  
Let $w\in H^2(0,T;U)$ and let $w_n\in W_n$ be defined by \eqref{eq:definterpex}.
 Then
 \begin{equation}\label{eq:cvschemeex}
  \Vert w' - \widehat{\partial}w_n\Vert_{V'}  \le \delta_n(R^V w')+ k \Vert R^V w''\Vert_V+ C_H^2 (\delta_n(w') + k\Vert w''\Vert_V).
 \end{equation}
 \end{lem}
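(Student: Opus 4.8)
The plan is to estimate the $V'$-norm by duality, writing
\[
\Vert w' - \widehat{\partial}w_n\Vert_{V'} = \sup_{v\in V,\,\Vert v\Vert_V\le 1}\langle w' - \widehat{\partial}w_n,v\rangle_{V',V},
\]
and to exploit the defining relation \eqref{eq:defdhatfromd} of the discrete derivative. Since $w\in H^2(0,T;U)$ gives $w'\in V\subset\mathbb{H}\subset V'$, one has $\langle w',v\rangle_{V',V}=\langle w',v\rangle_{\mathbb{H}}$, while by definition $\langle\widehat{\partial}w_n,v\rangle_{V',V}=\langle\partial w_n,P^V_n v\rangle_{\mathbb{H}}$. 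Splitting $\langle w',v\rangle_{\mathbb{H}}=\langle w',v-P^V_n v\rangle_{\mathbb{H}}+\langle w',P^V_n v\rangle_{\mathbb{H}}$ and recombining the $P^V_n v$-pieces, I would arrive at
\[
\langle w'-\widehat{\partial}w_n,v\rangle_{V',V}=\langle w',v-P^V_n v\rangle_{\mathbb{H}}+\langle w'-\partial w_n,P^V_n v\rangle_{\mathbb{H}},
\]
then bound the two terms separately; the first will produce the contribution $\delta_n(R^V w')+k\Vert R^V w''\Vert_V$ and the second the contribution $C_H^2(\delta_n(w')+k\Vert w''\Vert_V)$.

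For the first term I would pass to the Riesz representative $g:=R^V w'$, which by Lemma \ref{lem:rieszUV} lies in $H^1(0,T;U)$ with $g'=R^V w''$, so that $\langle w',v-P^V_n v\rangle_{\mathbb{H}}=\langle g,v-P^V_n v\rangle_V$. Since $P^V_n g\in V_n$ is $V$-orthogonal to $v-P^V_n v$, this equals $\langle g-P^V_n g,v-P^V_n v\rangle_V$, hence is bounded by $\Vert g-P^V_n g\Vert_V\Vert v\Vert_V$. The projection error $\Vert g-P^V_n g\Vert_V$ is then controlled by the interpolation estimate of Lemma \ref{lem:interpinVex} applied to $g$, which gives $\Vert g-P^V_n g\Vert_V\le\delta_n(R^V w')+k\Vert R^V w''\Vert_V$, as required.

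For the second term I would first identify $\partial w_n$ explicitly: since $w_n$ is built from $(P^U_n w(mk))_m$ via \eqref{eq:definterpex}, one has $\partial w_n(t)=P^U_n\bar w'(t)$ on $(mk,(m+1)k)$, where $\bar w'$ denotes the piecewise mean of $w'$ over the time intervals. Cauchy--Schwarz in $\mathbb{H}$ together with the embedding $\Vert\cdot\Vert_{\mathbb{H}}\le C_H\Vert\cdot\Vert_V$, used on both factors, yields $|\langle w'-\partial w_n,P^V_n v\rangle_{\mathbb{H}}|\le C_H^2\Vert w'-\partial w_n\Vert_V\Vert v\Vert_V$, where I also use that $P^V_n$ is a $V$-contraction. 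It then remains to show $\Vert w'-\partial w_n\Vert_V\le\delta_n(w')+k\Vert w''\Vert_V$, which I would obtain from the splitting $w'-\partial w_n=(w'-P^U_n w')+P^U_n(w'-\bar w')$: the first part contributes $\delta_n(w')$ by the definition \eqref{eq:definterVex}, while the second is bounded, using that $P^U_n$ is a contraction, by the standard averaging estimate $\Vert w'-\bar w'\Vert_V\le k\Vert w''\Vert_V$.

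Combining the two bounds and taking the supremum over $\Vert v\Vert_V\le 1$ gives \eqref{eq:cvschemeex}. The main obstacle is the careful bookkeeping in the second term: one must recognize that the natural estimates there live in the $V$-norm but are paired through $\mathbb{H}$, which is exactly what forces the factor $C_H^2$, and one must combine in the right order the $U$-projection error $\delta_n(w')$ with the time-averaging error $k\Vert w''\Vert_V$ arising from replacing $w'$ by its piecewise mean inside $\partial w_n$.
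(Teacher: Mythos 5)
Your proof is correct and follows essentially the same route as the paper: the same splitting of $\langle w'-\widehat{\partial}w_n,v\rangle$ into a $V$-projection error for $R^Vw'$ (controlled by Lemma \ref{lem:interpinVex}) and an $\mathbb{H}$-pairing that yields the $C_H^2$ factor, using the identity $\partial w_n=P^V_n(w')$ (your $P^U_n\bar w'$). The only cosmetic difference is that in the last step you re-derive the bound $\Vert w'-\partial w_n\Vert_V\le \delta_n(w')+k\Vert w''\Vert_V$ directly from the averaging estimate, where the paper simply invokes Lemma \ref{lem:interpinVex} applied to $w'$; both give the same estimate.
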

 \begin{proof}
 Let $\widehat{w_n'}\in V'$ be defined by
 \begin{equation}\label{eq:defrhhex}
  \langle \widehat{w_n'}, v\rangle_{V',V} = \langle w', P^V_n v\rangle_{V',V} = \langle w', P^V_n v\rangle_{\mathbb{H}} \mbox{ for all }v\in V.
 \end{equation}
 We notice that
 \[
  \Vert w' - \widehat{\partial}w_n\Vert_{V'} \le \Vert w' - \widehat{w_n'}\Vert_{V'} + \Vert \widehat{w_n'} - \widehat{\partial}w_n\Vert_{V'}.
 \]
 We have, for all $v\in V$,
 \begin{multline*}
 \langle w' - \widehat{w_n'}, v\rangle_{V',V} = \langle w', v\rangle_{V',V} -\langle w' , P^V_n v\rangle_{V',V} \\ 
 =  \langle R^V w' , v\rangle_{V} -\langle R^V w' , P^V_n v\rangle_{V} = \langle R^V w' - P^V_n R^V w' ,v\rangle_{V}.
\end{multline*}
 This proves that
 \[
  \Vert  w' - \widehat{w_n'}\Vert_{V'} = \Vert R^V w' - P^V_n R^V w'\Vert_{V}.
 \]
 Using Lemma \ref{lem:interpinVex} with $R^V w'$ instead of $w$, we get the first two terms of the right-hand-side of \eqref{eq:cvschemeex}.
 We also have that
  \begin{multline*}
  \Vert  \widehat{w_n'} - \widehat{\partial}w_n\Vert_{V'} = \sup_{v\in V,\Vert v\Vert_V = 1} \langle \widehat{w_n'} - \widehat{\partial} w_n, v\rangle_{V',V}\\
  =  \sup_{v\in V,\Vert v\Vert_V = 1} \langle w' - \partial w_n,P^V_n v\rangle_{\mathbb{H}}\le C_H \Vert w' - \partial w_n\Vert_{\mathbb{H}}.
\end{multline*}
 Using \eqref{eq:projvproju}, we observe that, for a.e. $t\in(mk,(m+1)k)$ and all $m=0,\ldots,N-1$,
 \begin{multline*}
 P^V_n (w')(t) =  \frac 1 k\int_{mk}^{(m+1)k} P^U_n (w')(s){\rm d}s \\
 =  \frac 1 k\int_{mk}^{(m+1)k} (P^U_n w)'(s){\rm d}s =\frac {P^U_n w((m+1)k) - P^U_n w(m k)} k = \partial w_n(t).
 \end{multline*}
 Therefore, $P^V_n (w') = \partial w_n$, and we get
\[
  \Vert w' - \partial w_n\Vert_{\mathbb{H}}  =  \Vert w' - P^V_n (w')\Vert_{\mathbb{H}} \le C_H \Vert w' - P^V_n (w')\Vert_{V}.
 \]
 We then use Lemma \ref{lem:interpinVex} with $w'$ instead of $w$ to conclude the proof.
  \end{proof}

 \begin{lem}\label{lem:interpinVpex}  Let $w\in H^2(0,T;U)$ and let $w_n\in W_n$ be defined by \eqref{eq:definterpex}. Recalling that $C_T$ is introduced in \eqref{eq:embWinH}, we have
 \[
 \sup_{t\in[0,T]}\Vert w_n(t) - w(t)\Vert_H \le k C_H (\frac 1 {\sqrt{T}} \Vert w'\Vert_V+ \sqrt{T}\Vert w''\Vert_V)+ C_T(\delta_n(w) + C_H^2\delta_n(w')).
 \]
 \end{lem}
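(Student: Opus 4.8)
The plan is to compare $w_n$ with $w$ through the intermediate function $P^U_n\circ w:t\mapsto P^U_n(w(t))$, by splitting
\[
w_n(t)-w(t)=(w_n(t)-P^U_n w(t))+(P^U_n w(t)-w(t)),
\]
and estimating the two parts by different mechanisms: the first captures the time-interpolation error, the second the spatial projection error. Since $w\in H^2(0,T;U)\subset H^1(0,T;U)$ and $P^U_n$ is a fixed bounded operator commuting with $\frac{d}{dt}$, the function $P^U_n\circ w$ lies in $H^1(0,T;U)\subset W$, with $(P^U_n w)'=P^U_n w'$; hence $w-P^U_n w\in W$ and the embedding \eqref{eq:embWinH} is available for the second part.

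\emph{First part.} On each interval $(mk,(m+1)k)$ one has, by \eqref{eq:schememtheta} and linearity of $P^U_n$,
\[
w_n(t)-P^U_n w(t)=P^U_n[\theta(w((m+1)k)-w(t))+(1-\theta)(w(mk)-w(t))],
\]
while at the nodes $t=mk$ the difference vanishes. Since $P^U_n$ is a contraction for $\Vert\cdot\Vert_U$, the embedding \eqref{eq:contemb} gives $\Vert w_n(t)-P^U_n w(t)\Vert_H\le C_H\Vert\theta(w((m+1)k)-w(t))+(1-\theta)(w(mk)-w(t))\Vert_U$. Writing each increment as $w(b)-w(t)=\int_t^b w'(s)\,ds$ and using $\theta+(1-\theta)=1$, I would bound this $U$-norm by $\int_{mk}^{(m+1)k}\Vert w'(s)\Vert_U\,ds\le k\sup_{s\in[0,T]}\Vert w'(s)\Vert_U$. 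The crucial point is then to \emph{avoid} the crude $\sqrt k\,\Vert w'\Vert_V$ estimate and instead control the uniform norm of $w'$ by the scale-explicit one-dimensional embedding $H^1(0,T;U)\hookrightarrow C([0,T];U)$, namely $\sup_t\Vert w'(t)\Vert_U\le \tfrac1{\sqrt T}\Vert w'\Vert_V+\sqrt T\,\Vert w''\Vert_V$; this follows by averaging the identity $\Vert w'(t)\Vert_U^2=\Vert w'(s)\Vert_U^2+2\int_s^t\langle w'(r),w''(r)\rangle_U\,dr$ over $s\in(0,T)$ and completing the square. Taking the supremum over $t$ produces the first group $kC_H(\tfrac1{\sqrt T}\Vert w'\Vert_V+\sqrt T\,\Vert w''\Vert_V)$.

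\emph{Second part.} Applying \eqref{eq:embWinH} to $w-P^U_n w\in W$ gives $\sup_t\Vert(w-P^U_n w)(t)\Vert_H\le C_T(\Vert w-P^U_n w\Vert_V^2+\Vert(w-P^U_n w)'\Vert_{V'}^2)^{1/2}$. By \eqref{eq:definterVex} the first norm equals $\delta_n(w)$, and since $(P^U_n w)'=P^U_n w'$ the derivative term is $\Vert w'-P^U_n w'\Vert_{V'}$, which I would bound using the Gelfand-triple inequalities $\Vert y\Vert_{U'}\le C_H\Vert y\Vert_H\le C_H^2\Vert y\Vert_U$, giving $\Vert w'-P^U_n w'\Vert_{V'}\le C_H^2\Vert w'-P^U_n w'\Vert_V=C_H^2\delta_n(w')$. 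Subadditivity of the square root then yields the second group $C_T(\delta_n(w)+C_H^2\delta_n(w'))$, and adding the two estimates gives the claim.

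\emph{Main obstacle.} The delicate step is the first part: the naive Cauchy--Schwarz bound $\int_{mk}^{(m+1)k}\Vert w'\Vert_U\le\sqrt k\,\Vert w'\Vert_V$ is too weak, and the sharper factor $k/\sqrt T$ is obtained only by first factoring out the uniform size $k\sup_t\Vert w'(t)\Vert_U$ of the time increment and then invoking the explicit one-dimensional Sobolev embedding for $w'$ (which is exactly where the hypothesis $w\in H^2(0,T;U)$, i.e. the $\Vert w''\Vert_V$ term, is used). Checking that $P^U_n\circ w\in W$ so that \eqref{eq:embWinH} applies, and the $V'$-versus-$V$ comparison through $C_H^2$, are the remaining routine points.
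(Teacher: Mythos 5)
Your proposal is correct and follows essentially the same route as the paper: the same splitting through $P^U_n\circ w$, the same bound $\Vert w_n(t)-P^U_nw(t)\Vert_U\le\int_{mk}^{(m+1)k}\Vert w'(s)\Vert_U\,{\rm d}s$ combined with the scale-explicit embedding $\sup_t\Vert w'(t)\Vert_U\le\frac1{\sqrt T}\Vert w'\Vert_V+\sqrt T\Vert w''\Vert_V$, and the same use of \eqref{eq:embWinH} together with the Gelfand-triple chain $\Vert\cdot\Vert_{V'}\le C_H\Vert\cdot\Vert_{\mathbb H}\le C_H^2\Vert\cdot\Vert_V$ for the projection error. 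The only (immaterial) difference is that you derive the uniform bound on $w'$ by averaging the squared identity rather than the paper's first-order Taylor identity plus Cauchy--Schwarz.
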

 \begin{proof}
 Let us first compute a preliminary inequality.
 We observe that, for $s,t\in[0,T]$,
\[
 w'(t) = w'(s) + \int_s^t w''(\tau){\rm d}\tau,
\]
which leads to 
\[
\Vert w'(t)\Vert_U \le \Vert w'(s)\Vert_U + \int_0^T  \Vert w''(\tau)\Vert_U{\rm d}\tau.
\]
Integrating with respect to $s$ and using the Cauchy-Schwarz inequality, we obtain
\begin{equation}\label{eq:poincmoy}
 \sup_{t\in[0,T]} \Vert w'(t)\Vert_U\le \frac 1 {\sqrt{T}} \Vert w'\Vert_V+ \sqrt{T}\Vert w''\Vert_V.
\end{equation}

 For all $t\in[0,T]$, we have
 \begin{multline*}
  \Vert w_n(t) - w(t)\Vert_H \le \Vert w_n(t) - P^U_n w(t)\Vert_H + \Vert P^U_n w(t) - w(t)\Vert_H \\
  \le C_H \Vert w_n(t) - P^U_n w(t)\Vert_U + \Vert P^U_n w(t) - w(t)\Vert_H.
 \end{multline*}
 In the proof of Lemma \ref{lem:interpinVex}, we show that for all $m=0,\ldots,N-1$ and all $t\in[mk,(m+1)k]$, we have
\[
\Vert P^U_n w(t) - w_n(t)\Vert_{U} \le  \int_{mk}^{(m+1)k} \Vert w'(s)\Vert_U{\rm d}s.
\]
Using \eqref{eq:poincmoy}, we obtain
\[
 \Vert P^U_n w(t) - w_n(t)\Vert_{U} \le k(\frac 1 {\sqrt{T}} \Vert w'\Vert_V+ \sqrt{T}\Vert w''\Vert_V).
\]

Now, we write, owing to \eqref{eq:embWinH},
\[
 \Vert P^U_n w(t) - w(t)\Vert_H \le C_T(\Vert P^U_n\circ  w - w\Vert_V^2 + \Vert P^U_n\circ  w' - w'\Vert_{V'}^2)^{1/2}\le C_T(\Vert P^U_n\circ  w - w\Vert_V + \Vert P^U_n\circ  w' - w'\Vert_{V'}).
\]
We now write
\begin{multline*}
  \Vert  P^U_n\circ  w' - w'\Vert_{V'} = \sup_{v\in V,\Vert v\Vert_V = 1} \langle P^U_n\circ  w' - w', v\rangle_{V',V}\\
  =  \sup_{v\in V,\Vert v\Vert_V = 1} \langle P^U_n\circ  w' - w', v\rangle_{\mathbb{H}}\le C_H \Vert P^U_n\circ  w' - w'\Vert_{\mathbb{H}}.
\end{multline*}
We then get that
\[
 \Vert P^U_n\circ  w' - w'\Vert_{\mathbb{H}}\le C_H \Vert P^U_n\circ  w' - w'\Vert_{V} = C_H \delta_n(w').
\]
This concludes the proof of the lemma.  
 \end{proof}

 \subsection{The convergence result}
 
 Using the results of Section \ref{sub:4.3}, we first show that we can approximate each function of $W$ by elements of $W_n$. For this we now consider a sequence of finite-dimensional subspaces $(U_n)_{n\in\mathbb{N}}$ of $U$ such that
 \[
  \displaystyle\lim_{n\to\infty} U_n = U.
 \]
This is equivalent to saying
 \[
  \displaystyle\lim_{n\to\infty} \Vert  u -  P^U_n u\Vert_U =0\hbox{ for all }u\in W,
 \]
 where $P^U_n$ denotes the orthogonal projection of $U$ to $U_n$.
 
 For each $n\in\N$ we let $\mu_n$ be the best constant such that $\|u\|_U\leq \mu_n\|u\|_H$ for all $u\in U_n$.  
 
 Let $N_n\in \mathbb{N}$ such that $\displaystyle\lim_{n\to \infty} N_n = \infty$. For each $n\in\mathbb{N}$, let $k_n = \frac {T}{N_n}$ be the time step and let
 \[
  V_n=\{v:(0,T)\to U_n: v\hbox{ is constant on }(mk_n,(m+1)k_n)\hbox{ for }m=0,\ldots,N_n - 1\}.
 \]
 The defintion of $W_n$ is given in \eqref{eq:schememtheta}. 
 
 Recall that $W = H^1(0,T;U')\cap V $ and that the spaces $Z$, $Y$, $X$ and $X_n$ are defined by
\[
 Z = V'\times G\hbox{ and } Y = V\times H, 
\]
\[
 X = \{(w',w): w\in W\},\  X_n = \{(\widehat{\partial}w,w): w\in W_n\}\hbox{ and }\]
 \[Y_n =  V_n\times H.
  \]
Thus, $X_n$ and $X$ are subspaces of $Z$ and $Y_n$ is a subspace of $Y$. Note that
\[
 X_n\cap X = \{ (0,\xi):\xi\in U_n\}
\]
 is a very small space. Nonetheless we will show that, under the CFL-condition, one has $\displaystyle\lim_{n\to\infty} X_n=X$ . We first prove one inclusion without any further condition. 
 \begin{thm}\label{thm:3.9}
 Let $(N_n)_{n\in\mathbb{N}}$ be a sequence of elements of $\mathbb{N}$ which tends to infinity.
   Let $w\in W$. Then there exist $w_n\in W_n$ for all $n\in\mathbb{N}$ such that
  \begin{equation}\label{eq:4.22}
   \Vert w' - \widehat{\partial}w_n\Vert_{V'}^2 +  \Vert w - w_n\Vert_{V}^2 + \sup_{t\in[0,T]} \Vert w(t) - w_n(t)\Vert_{H}^2 \to 0 \hbox{ as }n\to\infty. 
  \end{equation}

This implies in particular that  
$\displaystyle X\subset \displaystyle\lim_{n\to\infty} X_n$.
 
\end{thm}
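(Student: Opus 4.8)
The plan is to exploit the interpolation estimates of Subsection~\ref{sub:4.3}, which however only apply to functions with $H^2(0,T;U)$-regularity, and then to pass to a general $w\in W$ by a density argument combined with the closedness of $\displaystyle\lim_{n\to\infty}X_n$ established in Lemma~\ref{lem:limFn}.

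First I would treat the case $w\in H^2(0,T;U)$. For such $w$, define $w_n\in W_n$ by the interpolation \eqref{eq:definterpex}, and bound the three terms of \eqref{eq:4.22} directly by Lemmas~\ref{lem:interpinVex}, \ref{lem:cvschemeex} and \ref{lem:interpinVpex}. Each resulting upper bound is a sum of two kinds of terms: terms carrying a factor $k_n=T/N_n$, which tend to $0$ because $N_n\to\infty$, and terms of the form $\delta_n(v)$ for a fixed $v\in V$ (namely $v=w$, $v=w'$ and $v=R^Vw'$, all of which lie in $V$ since $w\in H^2(0,T;U)$ and, by Lemma~\ref{lem:rieszUV}, $R^Vw'\in H^1(0,T;U)$ with $(R^Vw')'=R^Vw''$). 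For each fixed $v\in V$ one has $\delta_n(v)\to 0$: since $\displaystyle\lim_{n\to\infty}U_n=U$ means $P^U_n u\to u$ for every $u\in U$, the integrand $\Vert v(t)-P^U_n v(t)\Vert_U^2$ converges pointwise to $0$ and is dominated by $\Vert v(t)\Vert_U^2$, so dominated convergence gives $\delta_n(v)^2\to 0$. Hence every term in the three estimates vanishes in the limit, and \eqref{eq:4.22} holds for $w\in H^2(0,T;U)$; equivalently $(w',w)\in\displaystyle\lim_{n\to\infty}X_n$.

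Next I would upgrade this to all of $X$. The linear map $\iota:W\to Z$, $w\mapsto(w',w)$, is bounded: indeed $\Vert\iota(w)\Vert_Z^2=\Vert w'\Vert_{V'}^2+\Vert w\Vert_V^2+\sup_{t\in[0,T]}\Vert w(t)\Vert_H^2\le(1+C_T^2)\Vert w\Vert_W^2$ by \eqref{eq:embWinH}, and its range is exactly $X$. Since $C^\infty([0,T];U)\subset H^2(0,T;U)$ is dense in $W$ for the graph norm $\Vert\cdot\Vert_W$ (a classical fact, see e.g.\ \cite{Sho97}), continuity of $\iota$ shows that $\{(w',w):w\in H^2(0,T;U)\}$ is dense in $X$ for the norm of $Z$. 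By the previous step this dense set is contained in $\displaystyle\lim_{n\to\infty}X_n$, which is a closed subspace of $Z$ by Lemma~\ref{lem:limFn}. Consequently $X\subset\displaystyle\lim_{n\to\infty}X_n$. Unfolding Definition~\ref{def:limseqspace}, this inclusion says precisely that for every $w\in W$ there exist $w_n\in W_n$ with $(\widehat{\partial}w_n,w_n)\to(w',w)$ in $Z$, i.e.\ \eqref{eq:4.22}, which proves the theorem.

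The main obstacle is the mismatch between the minimal regularity $w\in W$ allowed in the statement and the $H^2(0,T;U)$-regularity required to make the interpolation estimates, and in particular the bound on $\Vert w'-\widehat{\partial}w_n\Vert_{V'}$, effective. The device that overcomes it is the density of $C^\infty([0,T];U)$ in $W$ together with the fact that $\displaystyle\lim_{n\to\infty}X_n$ is closed, which lets a result proved only for smooth data propagate to all of $X$ without having to construct an explicit approximation for rough $w$.
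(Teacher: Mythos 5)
Your proposal is correct and follows essentially the same route as the paper's proof: first treat $w\in H^2(0,T;U)$ via the interpolation Lemmas~\ref{lem:interpinVex}, \ref{lem:cvschemeex} and \ref{lem:interpinVpex}, with $\delta_n(v)\to 0$ by dominated convergence, and then pass to a general $w\in W$ by density of $H^2(0,T;U)$ in $W$. The only cosmetic difference is that you invoke the closedness of $\displaystyle\lim_{n\to\infty}X_n$ (Lemma~\ref{lem:limFn}) together with the boundedness of $w\mapsto(w',w)$ from $W$ to $Z$, whereas the paper carries out the same $\varepsilon$-approximation argument by hand.
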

 \begin{proof}
 We use the preceding results, letting $\theta=0$.
 Let us first observe that the property
\[
  \displaystyle\lim_{h\to 0} \Vert u - P^U_n u\Vert_U = 0\mbox{ for all }u\in U 
\]
implies, by Lebesgue's convergence theorem,  that
\[
 \displaystyle\lim_{h\to 0} \delta_n(v)  = \displaystyle\lim_{h\to 0} \Vert v - P^U_n\circ  v\Vert_V = 0\mbox{ for all }v\in V.
\]
 
  Let $w\in W$ and let $\varepsilon>0$. By the proof of \cite[III. Proposition 1.2, p. 106]{Sho97} there exists $v\in H^2(0,T;U)$ such that
  \[
    \Vert w' - v'\Vert_{V'} +  \Vert w - v\Vert_{V}\le \varepsilon.
  \]
  By \eqref{eq:embWinH} this implies that $\sup_{t\in[0,T]}\Vert w(t) - v(t)\Vert_{H}\le C_H \varepsilon$.
  Let $v_n\in W_n$ be given from $v$ via \eqref{eq:definterpex}, which means that $v_n(t) = P^U_n v(m k_n)$ on $[m k_n,(m+1)k_n)$ for $m=0,\ldots,N_n-1$ and $v_n(T) = P^U_n v(T)$. Then, by Lemma \ref{lem:interpinVex},  $\Vert v_n - v\Vert_V\to 0$ as $n\to\infty$. By Lemma \ref{lem:cvschemeex}, $\displaystyle\lim_{n\to\infty}\Vert v' - \widehat{\partial}v_n\Vert_{V'} = 0$ and, by Lemma \ref{lem:interpinVpex},  $\displaystyle\lim_{n\to\infty}\sup_{t\in[0,T]}\Vert v(t) - v_n(t)\Vert_{H} = 0$.

  Since $\varepsilon>0$ is arbitrary, the proof is complete.
  
  The preceding proof shows that $\displaystyle X\subset \displaystyle\lim_{n\to\infty} X_n$.
 \end{proof}
Next we show that the discrete solutions converge to the solution of the continuous problem. 
We know that a unique solution of Problem~\eqref{eq:5.1} exists (see Theorem~\ref{thm:5.3} and the comments after it). But our estimates give also a  new proof of this result which we incorporate into the formulation and the proof of the following  theorem.  Recall that $\Phi\in {\mathcal{L}}(H)$ and $\Vert\Phi\Vert\le 1$.

 \begin{thm}\label{thm:cv} Let $f\in V'$ and $\xi_0\in H$.  Then Problem~\eqref{eq:5.1} has a unique solution $u\in W$. 

Let $(N_n)_{n\in\mathbb{N}}$ be a sequence of elements of $\mathbb{N}$ which tends to infinity such that the following holds.
	\begin{itemize}
		\item []{\bf Case  $\theta\in[0,\frac 1 2)$.} We assume that Condition \eqref{eq:cfltheta} is fulfilled. 
		\item []{\bf Case  $\theta = \frac 1 2$ and $\Vert\Phi\Vert = 1$.} We assume that $\sup_{n\in\mathbb{N}} \frac {\mu_n} {N_n} <\infty$.
		\item []{\bf Case  $\theta = \frac 1 2$ and $\Vert\Phi\Vert < 1$.} No additional condition is required. 
		\item []{\bf Case  $\theta \in (\frac 1 2,1]$.} No additional condition is required.
	\end{itemize}

 Then the  Euler Scheme \eqref{eq:schemeiniex}-\eqref{eq:schememextheta} has a unique solution to which we associate the step function $w_n\in W_n$ via \eqref{eq:schememtheta}.  These discrete solutions $w_n$ converges to $u$ in the following sense:
 \begin{equation}\label{eq:4.23}
   \displaystyle\lim_{n\to\infty} \Big(\Vert u' - \widehat{\partial}w_n\Vert_{V'} +  \Vert u - w_n\Vert_{V} + \sup_{t\in[0,T]} \Vert u(t) - w_n(t)\Vert_{H}\Big) = 0
 \end{equation}
 \end{thm}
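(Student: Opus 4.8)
The plan is to recognize that this statement is the payoff of the abstract machinery of Section~\ref{sec:NCBNB}: I would obtain both the well-posedness of \eqref{eq:5.1} and the convergence \eqref{eq:4.23} as a single application of Theorem~\ref{thm:bnbnonconf} to the concrete data $Z=V'\times G$, $Y=V\times H$, the form $b$ of \eqref{eq:defb}, and the subspaces $X_n$, $Y_n$ of \eqref{eq:defxhat} and \eqref{eq:defY}. Accordingly, the work reduces to checking the hypotheses of that theorem and then translating its conclusion back through the dictionary established in Section~\ref{sec:extended} and Lemma~\ref{lem:schemeexpabs}.

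First I would verify the standing hypotheses of Theorem~\ref{thm:bnbnonconf}. The space $Y=V\times H$ is a product of Hilbert spaces, hence reflexive; the form $b$ is continuous (with constant $2M$); and $X_n$ and $Y_n$ share the same finite dimension $\dim(U_n)^{N_n+1}$. The one nontrivial input is the \emph{uniform} BNB condition \eqref{eq:betabnbn}, i.e.\ a single $\beta>0$, independent of $n$, valid for every $n$. This is exactly what Lemmas~\ref{lem:estimnoruTHthetaex}, \ref{lem:estimnoruTHthetademi} and \ref{lem:estimnoruTHthetaim} provide, and here I must match each case of the theorem to the hypothesis of the corresponding lemma: for $\theta\in[0,\frac12)$ the assumed CFL-condition \eqref{eq:cfltheta} (for each $n$, with $k_n$ and $\mu_n$) feeds Lemma~\ref{lem:estimnoruTHthetaex}; for $\theta=\frac12$ with $\|\Phi\|=1$ the assumption $\sup_n \mu_n/N_n<\infty$ yields a uniform bound $k_n\mu_n\le C$ as required by Lemma~\ref{lem:estimnoruTHthetademi}; and for $\theta=\frac12$ with $\|\Phi\|<1$, as well as for $\theta\in(\frac12,1]$, the same lemmas give $\beta$ with no extra condition. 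In each case the stated dependencies show that $\beta$ depends neither on $n$ nor on $U_n$.

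Next I would supply the closed subspace $X\subset X_\infty:=\lim_n X_n$ demanded by Theorem~\ref{thm:bnbnonconf}. Taking $X=\{(u',u):u\in W\}$, Lemma~\ref{lem:xhasdenserange} shows that $X$ is closed in $Z$ and satisfies the density condition \eqref{eq:denserange}, while Theorem~\ref{thm:3.9} (which needs no CFL) shows $X\subset\lim_n X_n$. Theorem~\ref{thm:bnbnonconf} then delivers at once: the equality $X=X_\infty$; for $L$ defined by \eqref{eq:defL}, a unique $\widehat{x}=(u',u)\in X$ solving \eqref{eq:pbcontinZ}; and convergence $x_n\to\widehat{x}$ in $Z$ of the discrete solutions $x_n=(\widehat{\partial}w_n,w_n)$. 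Reading this through the equivalences of Section~\ref{sec:extended} and Lemma~\ref{lem:schemeexpabs}, the second component $u\in W$ is the unique solution of \eqref{eq:5.1} --- giving in particular a self-contained proof of existence and uniqueness --- and, recalling the norm on $Z$, the convergence $x_n\to\widehat{x}$ is exactly \eqref{eq:4.23}. The uniqueness of the discrete solution $w_n$ for each fixed $n$ is automatic from $\dim X_n=\dim Y_n<\infty$ together with the BNB condition.

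I expect the main obstacle to be conceptual rather than computational: ensuring that $\beta$ is genuinely uniform in $n$ and that each case-hypothesis on $(N_n)$ is the precise hypothesis the matching BNB-lemma requires. Uniformity of $\beta$ is indispensable, since Theorem~\ref{thm:bnbnonconf} is applied to the whole sequence at once; the delicate point is the case $\theta=\frac12$, $\|\Phi\|=1$, where the qualitative condition $\sup_n\mu_n/N_n<\infty$ must be seen to encode the quantitative bound $k_n\mu_n\le C$ underlying Lemma~\ref{lem:estimnoruTHthetademi}, consistently with the necessity observed in Remark~\ref{rem:4.5}.
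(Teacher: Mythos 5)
Your proposal is correct and follows essentially the same route as the paper: the paper's proof of Theorem~\ref{thm:cv} likewise applies Theorem~\ref{thm:bnbnonconf}, citing Lemmas~\ref{lem:estimnoruTHthetaex}, \ref{lem:estimnoruTHthetademi} and \ref{lem:estimnoruTHthetaim} for the uniform BNB estimate, Lemma~\ref{lem:xhasdenserange} for the dual uniqueness condition, and Theorem~\ref{thm:3.9} for $X\subset\displaystyle\lim_{n\to\infty}X_n$, then translates the conclusion back via Section~\ref{sec:extended}. Your additional care in matching each case hypothesis on $(N_n)$ to the corresponding BNB lemma and in checking uniformity of $\beta$ is exactly the implicit content of the paper's argument.
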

 \begin{proof} 
 We apply Theorem~\ref{thm:bnbnonconf}	in our situation. There are three hypotheses to be verified. Lemma~\ref{lem:estimnoruTHthetaex}, Lemma~\ref{lem:estimnoruTHthetademi} and Lemma~\ref{lem:estimnoruTHthetaim}  give the BNB-estimate \eqref{eq:bnbeulerextheta}. Lemma~\ref{lem:xhasdenserange} shows the dual uniqueness condition~\eqref{eq:denserange} for $X$.  
 Finally, Theorem~\ref{thm:3.9} shows that $X\subset \displaystyle\lim_{n\to \infty}X_n$. 
 Now Theorem~\ref{thm:bnbnonconf} asserts that  Problem \eqref{eq:pbcontinZ} has a unique solution, and this is equivalent to Problem~\eqref{eq:5.1} having a unique solution. Theorem~\ref{thm:bnbnonconf} also gives the required convergence of the Euler scheme.  This finishes the proof of the theorem.     
 	  \end{proof}
 	  
\begin{rem}\label{rem4.14}
In the proof we used  Theorem \ref{thm:bnbnonconf} which also implies that $Y = \mathop{\rm w\mbox{-}lim}\limits_{n\to\infty} Y_n$.
 But in this case, Lemma~\ref{lem:interpinVex}  also shows that $Y = \displaystyle\lim_{n\to\infty} Y_n$.
\end{rem}
\begin{rem}
	In the case $\theta=\frac{1}{2}$, $\Phi=\Id$ and $T=1$, the conclusion of Theorem~\ref{thm:cv} is false if $\displaystyle\sup_{n\in\N}\frac{\mu_n}{N_n}=\infty$.  In fact, let $L\in Y'$. Using \eqref{eq:bnbeulerimthetademi}, for each $n\in\N$ we find a unique $x_n\in X_n$ such that $b(x_n,y)=\langle L,y\rangle_{Y',Y}$ for all $y\in Y_n$.  Then, for each $n\in\N$,  there exists $w_n\in W_n$ such that $x_n=(\widehat{\partial}w_n,w_n)$. If the conclusion of Theorem ~\ref{thm:cv} is true, it follows in particular that $(x_n)_{n\in\N}$ converges in $Z$.  Then Theorem~\ref{thm:bnbnonconfrec} yields $\beta>0$ such that 
	\[    \sup_{\|y\|_Y\leq 1,y\in Y_n} b(x,y)\geq \beta \|x\|_Z\]
	for all $x\in X_n$ and all $n\in \N$.  Then Remark~\ref{rem:4.5} implies that $k_n\mu_n\leq \frac{2}{\beta}$, where k$_n=\frac{1}{N_n}$. Thus $\frac{\mu_n}{N_n}\leq \frac{2}{\beta}$ for all $n\in \N$.  
\end{rem}

Since the estimate \eqref{eq:erresttheta} holds, the speed of convergence in \eqref{eq:4.23} is optimal. Theorem \ref{thm:cv} also makes more precise the interpolation result, Theorem \ref{thm:3.9}. It says that we can approximate each $w\in W$ by discrete functions $w_n$ in the sense of \eqref{eq:4.23}. We now show  that also the converse is true: the functions in $W$ are exactly those functions which can be approximated by discrete elements in the sense of \eqref{eq:4.23}.
  In other words, the following surprising corollary holds. 
 
 \begin{cor}\label{cor:4.20} Under the assumptions of Theorem \ref{thm:cv} one has
 \[X=\displaystyle\lim_{n\to \infty} X_n.\]
 This means the following. Let 
 \begin{multline*}
  W_n = \{w:[0,T]\to U_n: w(t) = (1-\theta)w(mk)+\theta w((m+1)k)\hbox{ on }(m k_n,(m+1)k_n)\\
  \hbox{ for all }m=0,\ldots,N_n - 1\}.
 \end{multline*}

 Then for $(\widetilde{v},v)\in V'\times G$, the following assertions are equivalent.
 \begin{itemize}
  \item [(i)] There exists $w_n\in W_n$ such that
  \[
   \Vert \widetilde{v} - \widehat{\partial}w_n\Vert_{V'}^2 +  \Vert v - w_n\Vert_{G}^2 \to 0 \hbox{ as }n\to\infty;
  \]
  \item [(ii)] $v\in W$, $\widetilde{v} = v'$.
 \end{itemize} 
 \end{cor}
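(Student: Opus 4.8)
The plan is to recognize that the claimed equivalence of (i) and (ii) is nothing but the set identity $\lim_{n\to\infty} X_n = X$, read through the definitions \eqref{eq:defhatw} of $X$ and \eqref{eq:defxhat} of $X_n$, and then to obtain this identity as a direct application of Theorem~\ref{thm:bnbnonconf}. First I would record the translation: writing a generic element of $X_n$ as $(\widehat{\partial}w_n, w_n)$ with $w_n \in W_n$, convergence of $(\widehat{\partial}w_n, w_n)$ to $(\widetilde{v}, v)$ in the product norm of $Z = V'\times G$ is exactly $\Vert\widetilde{v} - \widehat{\partial}w_n\Vert_{V'}^2 + \Vert v - w_n\Vert_G^2 \to 0$; hence $(\widetilde{v}, v) \in \lim_{n\to\infty} X_n$ in the sense of Definition~\ref{def:limseqspace} is precisely assertion (i). On the other side, $(\widetilde{v}, v) \in X$ means $v \in W$ and $\widetilde{v} = v'$, which is precisely assertion (ii). So once $\lim_{n\to\infty} X_n = X$ is known, the equivalence follows immediately.

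To establish $\lim_{n\to\infty} X_n = X$, I would verify the hypotheses of Theorem~\ref{thm:bnbnonconf} and invoke its conclusion $X = X_\infty$, where $X_\infty := \lim_{n\to\infty} X_n$. The space $Y = V \times H$ is a Hilbert space, hence reflexive, and $b$ is continuous. Under the assumptions of Theorem~\ref{thm:cv}, the uniform BNB condition \eqref{eq:betabnbn} holds by Lemma~\ref{lem:estimnoruTHthetaex}, Lemma~\ref{lem:estimnoruTHthetademi} or Lemma~\ref{lem:estimnoruTHthetaim}, according to the value of $\theta$ and of $\Vert\Phi\Vert$. By Lemma~\ref{lem:xhasdenserange}, $X$ is closed and satisfies the dual uniqueness condition \eqref{eq:denserange}, while Theorem~\ref{thm:3.9} gives the inclusion $X \subset X_\infty$, so that $X$ is a closed subspace of $X_\infty$ fulfilling \eqref{eq:denserange}. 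Theorem~\ref{thm:bnbnonconf} then yields $X = X_\infty$, as required.

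The content of the two implications is asymmetric, and this is where the main difficulty lies. The direction (ii) $\Rightarrow$ (i), i.e. $X \subset X_\infty$, is just the interpolation statement of Theorem~\ref{thm:3.9} and requires no CFL restriction. The reverse inclusion (i) $\Rightarrow$ (ii), i.e. $X_\infty \subset X$, is the surprising part: even though $X_n \cap X = \{(0,\xi): \xi \in U_n\}$ is tiny, every $Z$-limit of discrete pairs $(\widehat{\partial}w_n, w_n)$ must again have the form $(v', v)$ with $v \in W$. I expect this to be the crux, but it is already resolved inside Theorem~\ref{thm:bnbnonconf}: the argument there uses the inf-sup inequality \eqref{eq:betabnb} on $X_\infty$ (Lemma~\ref{lem:infsup}) together with the uniqueness condition \eqref{eq:denserange} to force any element of $X_\infty$ to coincide with the solution $\widehat{x} \in X$ of the corresponding limit problem. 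In this sense the real work has been front-loaded into establishing the BNB condition through the CFL analysis of Subsection~\ref{sub:4.2} and the density condition of Lemma~\ref{lem:xhasdenserange}; the corollary itself only has to collect these facts and unwind the definitions.
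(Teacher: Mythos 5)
Your proposal is correct and follows the paper's own route exactly: one inclusion from Theorem~\ref{thm:3.9}, the reverse from Theorem~\ref{thm:bnbnonconf} (whose hypotheses are verified via the BNB lemmas and Lemma~\ref{lem:xhasdenserange}, as already done in the proof of Theorem~\ref{thm:cv}), and the equivalence of (i) and (ii) by unwinding the definitions of $\lim_{n\to\infty}X_n$ and $X$. The paper's proof is merely terser, deferring the hypothesis checks to Theorem~\ref{thm:cv}, which you have simply made explicit.
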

 \begin{proof} 
 	We have seen in Theorem~\ref{thm:3.9} that $X\subset \displaystyle\lim_{n\to \infty}X_n$. The reverse inclusion follows  from Theorem~\ref{thm:bnbnonconf}.   Moreover, Condition (i) means that $x := (\widetilde{v},v)\in \displaystyle\lim_{n\to\infty} X_n$, and   $x\in X$ exactly means (ii).
  
 \end{proof}

\section{Discontinuous Galerkin scheme}\label{sec:dg}
We focus in this section on the time discontinuous Galerkin scheme. We again keep the framework of Section \ref{sec:pbcont}. 
	\subsection{Description of the scheme}\label{sub:51}
 As in Section \ref{sec:eulerex},  let $U_n$ be a finite dimensional subspace of $U$, $P^U_n$ be the orthogonal projection on $U_n$  for the scalar product of $U$, $N\in\mathbb{N}^\star$, and define $k = \frac {T}{N}$. 
 Let $q\in \mathbb{N}$.
 For any vector space $E$ we denote by  $\mathcal{P}^q(\mathbb{R};E)$ the set of all functions $P:\mathbb{R}\to E$ such that there exists $p_0,\ldots,p_q\in E$ with
  \[
  P(t) = \sum_{i=0}^q t^i p_i\mbox{ for all }t\in \mathbb{R}.
  \]
Let $W_n\subset G$ be the set of all functions $w~:~[0,T]\to U_n$ verifying the following property:
 for all $m=1,\ldots,N$, the restriction of $w$ to $((m-1)k,mk]$ is an element of $\mathcal{P}^q(\mathbb{R};U_n)$, which means that
 there exist $q+1$ elements of $U_n$, denoted by $w_0^{(m)},\ldots,w_q^{(m)}$ such that
  \[
   w(t) = \sum_{i=0}^q \Big(\frac {t-(m-1)k} k\Big)^i w_i^{(m)}\mbox{ for all }t\in ((m-1)k,mk].
  \] 
 We then define the ``broken derivative'' $\partial_b w\in V$, for any $w\in W_n$, as the function equal to $w'(t)$ for a.e. $t\in ((m-1)k,mk)$ (we have $\partial_b w = 0$ if $q=0$).
 
 The time discontinuous Galerkin scheme  \cite{eriksson1985dg,saito2021var} amounts to finding  $w\in W_n$ such that
\begin{equation}\label{eq:schemeinidg}
 \langle w(0) - \Phi w(T) , v\rangle_H = \langle \xi_0 , v\rangle_H\mbox{ for all }v\in U_n
\end{equation}
and
\begin{multline}\label{eq:schememdg}
\int_{(m-1)k}^{mk}\langle\partial_b  w(t), v(t)\rangle_{ H }{\rm d}t + \langle w_0^{(m)}- w((m-1)k), v((m-1)k)\rangle_{ H }\\
+\int_{(m-1)k}^{mk} a(t, w(t), v(t)){\rm d}t 
  =\int_{(m-1)k}^{mk}\langle f(t),v(t)\rangle {\rm d}t\\
  \mbox{ for all }m=1,\ldots,N \mbox{ and } v \in \mathcal{P}^q(\mathbb{R};U_n).
\end{multline}

\begin{rem}
Note that, letting $q=0$, since $w(mk) = w_0^{(m)}$, Scheme \eqref{eq:schememdg} is reduced to
\begin{multline}\label{eq:schemeeulerimp}
 \langle w(mk)- w((m-1)k), v\rangle_{ H }
+\int_{(m-1)k}^{mk} a(t, w(mk), v){\rm d}t 
  =\int_{(m-1)k}^{mk}\langle f(t),v\rangle {\rm d}t\\
  \mbox{ for all }m=1,\ldots,N \mbox{ and } v\in U_n,
\end{multline}
which is the same expression as the one given for $\theta=1$  for the Euler implicit scheme in Section \ref{sec:eulerex}.
\end{rem}
We now observe that the space $W_n$ is isomorphic to $U_n^{m(q+1)+1}$. As in Section \ref{sec:eulerex}, we have that each element of $W_n$ defines a unique element of $V$, but this identification is again not injective, because of the initial value (it was the final value in  Section \ref{sec:eulerex}), and  we again do not distinguish the notation for the everywhere defined $w\in W_n$ and the element of $V_n$ it defines, where $V_n$ is the subspace of $V$ defined by
\begin{equation}\label{eq:defVndg}
V_n = \{v\in V: \ v_{|((m-1)k,mk)}\in \mathcal{P}^q(\mathbb{R};U_n)\hbox{ for all }m=1,\ldots,N\}.
\end{equation}
In order to put this setting into the framework of Sections \ref{sec:NCBNB} and \ref{sec:pbcont}, we define the discrete derivative of an element of $w\in W_n$ by modifying the broken derivative $\partial_b$.
 For any $i=0,\ldots,q$, let $\psi_i\in \mathcal{P}^q(\mathbb{R};\mathbb{R})$ be the polynomial such that
\begin{equation}\label{eq:defpsidg}
  \int_0^{1} \psi_i(s) s^i {\rm d} s = 1\hbox{ and }\int_0^{1} \psi_i(s)s^j{\rm d} s = 0\mbox{ for all } j \in\{0,\ldots,q\}\setminus\{i\} .
 \end{equation}
 In the case $q=0$,  we set $\psi_0(s) =1$ for all $s\in [0,1]$.
 Note that, for $i=0,\ldots,q$, the $q+1$ coefficients of the polynomials $\psi_i$ with degree $q$ are the coefficients of the $i$-th line or column of the inverse of the Gram matrix whose coefficients are equal to $\int_0^{1} s^{i+j}{\rm d} s = \frac 1 {i+j+1}$ for $j=0,\ldots,q$, and are elements of $\mathbb{Z}\setminus\{0\}$ (see Appendix \ref{sec:gram}). 
 For all $w\in W_n$, we then have the relation
 \begin{multline}\label{eq:defjumpdg}
  \frac 1 k\int_{(m-1)k}^{mk} \psi_0\Big(\frac {t-(m-1)k} k\Big)w(t){\rm d}t  = w_0^{(m)}  = \displaystyle\lim_{\substack{t\to (m-1)k \\ t>(m-1)k}} w(t)\mbox{ for all }m=1,\ldots,N-1.
 \end{multline}
 
 We then define $\partial: W_n\to V_n$,  by 
 \begin{multline}\label{eq:defqnevolstrdg}
\partial w(t) =  \partial_b w(t) + \frac  1 k\psi_0\Big(\frac {t-(m-1)k} k\Big) (w_0^{(m)}- w((m-1)k)),\\
 \hbox{ for a.e. }t\in ((m-1)k, mk),\hbox{ for all }m=1,\ldots,N,\hbox{ for all }w\in W_n.
 \end{multline}
Then $\widehat{\partial} w\in V'$ is again defined by \eqref{eq:defdhatfromd}, where $P^V_n$, the orthogonal projection on $V_n$ in $V$. It  is explicitely given  by 
\begin{multline}\label{eq:defpvndg}
P^V_n w (t) = \sum_{i=0}^q \left(\frac 1 k\int_{(m-1)k}^{mk} \Big(\frac {s-(m-1)k} k\Big)^i P^U_n w(s){\rm d}s \right) \psi_i\Big(\frac {t-(m-1)k} k\Big),\\ \hbox{ for a.e. }t\in((m-1)k,mk),\hbox{ for all }m=1,\ldots,N\hbox{ and for all } w\in W_n.
\end{multline}

Similarly to what has been done in Section~\ref{sec:eulerex}, we  define $X_n$ by 
\begin{equation}\label{eq:defXfromWdg}
X_n = \{(\widehat{\partial} w,w): w\in W_n\},
\end{equation}	
 and the mapping $W_n\to X_n$, $w\mapsto (\widehat{\partial} w,w)$ is again bijective. We now define the space $Y_n \subset Y := V\times H$ by
\begin{equation}\label{eq:defYdg}
Y_n = V_n\times U_n.
\end{equation}
The linear mapping $W_n\to Y_n$,  $w\mapsto (w ,w(0))$ is bijective as well. Keeping the definitions \eqref{eq:defb}  for $b$ and  \eqref{eq:defL} for $L$, we now have the following result.

\begin{lem}\label{lem:schemeexdg} Let $w\in W_n$. Let $x = (\widehat{\partial} w,w)\in X_n$. The following assertions are equivalent:
\begin{enumerate}
 \item[(i)] The element  $w$ satisfies \eqref{eq:schemeinidg}-\eqref{eq:schememdg}.
\item[(ii)] The element $x$ is solution of the following problem:
\begin{equation}\label{eq:schemeinZYdg}
x\in X_n,\ b(x,y) = L(y),\hbox{ for all }y\in Y_n.
\end{equation}
\end{enumerate}
\end{lem}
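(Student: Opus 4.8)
The plan is to expand the variational equation $b(x,y)=L(y)$ for a general test pair $y=(v,z)\in Y_n=V_n\times U_n$ and split it, by linearity and the product structure of $Y_n$, into the two conditions obtained by setting $z=0$ (with $v$ ranging over $V_n$) and $v=0$ (with $z$ ranging over $U_n$). With $x=(\widehat{\partial}w,w)$, the definitions \eqref{eq:defb} and \eqref{eq:defL} give
\[
 b(x,y)=\langle\widehat{\partial}w+\mathcal{A}w,v\rangle_{V',V}+\langle w(0)-\Phi w(T),z\rangle_H,\qquad L(y)=\langle f,v\rangle_{V',V}+\langle\xi_0,z\rangle_H.
\]
The condition coming from $v=0$ reads $\langle w(0)-\Phi w(T),z\rangle_H=\langle\xi_0,z\rangle_H$ for all $z\in U_n$, which is precisely \eqref{eq:schemeinidg}. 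Hence the whole work reduces to showing that the condition coming from $z=0$, namely $\langle\widehat{\partial}w+\mathcal{A}w,v\rangle_{V',V}=\langle f,v\rangle_{V',V}$ for all $v\in V_n$, is equivalent to \eqref{eq:schememdg}.

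First I would rewrite $\langle\widehat{\partial}w,v\rangle_{V',V}$ for $v\in V_n$. Since $v\in V_n$ we have $P^V_n v=v$, so by \eqref{eq:defdhatfromd} this term equals $\langle\partial w,v\rangle_{\mathbb{H}}=\int_0^T\langle\partial w(t),v(t)\rangle_H\,{\rm d}t$. Inserting the definition \eqref{eq:defqnevolstrdg} of $\partial w$ and splitting the integral over the intervals $((m-1)k,mk)$, the broken-derivative part contributes $\sum_{m=1}^N\int_{(m-1)k}^{mk}\langle\partial_b w(t),v(t)\rangle_H\,{\rm d}t$, the first term of \eqref{eq:schememdg}. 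The key computation is the jump part: writing $v(t)=\sum_{j=0}^q\big(\tfrac{t-(m-1)k}{k}\big)^j v_j^{(m)}$ on the $m$-th interval and performing the change of variable $s=\tfrac{t-(m-1)k}{k}$, the orthogonality relations \eqref{eq:defpsidg} of $\psi_0$ annihilate every monomial except $j=0$, leaving exactly $\langle w_0^{(m)}-w((m-1)k),v_0^{(m)}\rangle_H$. Since $v_0^{(m)}$ is the value at the left endpoint of the polynomial representing $v$ on the $m$-th interval, this matches the jump term $\langle w_0^{(m)}-w((m-1)k),v((m-1)k)\rangle_H$ of \eqref{eq:schememdg}. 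Finally $\langle\mathcal{A}w,v\rangle_{V',V}=\sum_m\int_{(m-1)k}^{mk}a(t,w(t),v(t))\,{\rm d}t$ and $\langle f,v\rangle_{V',V}=\sum_m\int_{(m-1)k}^{mk}\langle f(t),v(t)\rangle\,{\rm d}t$ directly from the definitions of $\mathcal{A}$ and of the duality pairing.

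Putting these together, the $z=0$ condition is exactly the sum over $m=1,\ldots,N$ of equations \eqref{eq:schememdg}. To pass between this single summed identity and the $N$ individual equations, I would localize the test function: given a target polynomial in $\mathcal{P}^q(\mathbb{R};U_n)$ on a fixed interval, take $v\in V_n$ equal to it on $((m-1)k,mk]$ and zero on all other intervals. For such $v$ every summand with index $m'\neq m$ vanishes, since both its integral terms (over an interval where $v=0$) and its jump term (which sees only the right-hand value $v_0^{(m')}=0$) are zero; this yields \eqref{eq:schememdg} for that $m$, and conversely summing the individual equations recovers the global identity. The main obstacle I expect is the jump-term computation: one must track that $\partial w$ weights the jump $w_0^{(m)}-w((m-1)k)$ by $\tfrac1k\psi_0$, that testing against $v\in V_n$ (rather than against an element of $W_n$) is what makes the orthogonality \eqref{eq:defpsidg} applicable and selects precisely the endpoint value, and that the half-open intervals and the distinction between $v_0^{(m)}$ and $w((m-1)k)$ (interior right-limits versus boundary values) are handled without index errors.
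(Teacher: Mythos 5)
Your proof is correct and is essentially the paper's argument: the paper's one-line proof ("this holds by \eqref{eq:defqnevolstrdg} and \eqref{eq:defjumpdg}") is exactly the computation you carry out, namely that for $v\in V_n$ one has $P^V_n v=v$, so $\langle\widehat{\partial}w,v\rangle_{V',V}=\langle\partial w,v\rangle_{\mathbb{H}}$, and the $\psi_0$-weighted jump term integrates against $v$ to give $\langle w_0^{(m)}-w((m-1)k),v_0^{(m)}\rangle_H$ by the orthogonality relations \eqref{eq:defpsidg}. The splitting of the test space $Y_n=V_n\times U_n$ and the localization to a single interval are the intended (and valid) way to pass between the global variational identity and the $N$ equations of the scheme.
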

\begin{proof} This property holds by \eqref{eq:defqnevolstrdg} and \eqref{eq:defjumpdg}.
 
\end{proof}

Let us now prove that the conditions allowing to use the results of Sections \ref{sec:NCBNB} and \ref{sec:pbcont} are fulfilled.

\subsection{BNB estimates}
  
In order to prove the BNB estimate using Lemma \ref{lem:suffbnb}, we need some preliminary lemmas.

For any $w\in W_n$, we define $\widehat{\partial}_b w\in V'$ by
\begin{equation}\label{eq:defdhatfromdbroken}
 \langle \widehat{\partial}_b w,v\rangle_{V',V} = \langle \partial_b w,P^V_n  v\rangle_{\mathbb{H}}\mbox{ for all }w\in W_n \mbox{ and } v\in V.
\end{equation}
Note that, as in the definition of $\widehat{\partial} w$, $\widehat{\partial}_b w$ is different from $\partial_b w$ seen as an element of $V'$ by the standard embedding of $\mathbb{H}$ in $V'$, but that we have
\[
 \langle \widehat{\partial}_b w, v \rangle_{V',V} =\langle \partial_b w, v \rangle_{\mathbb{H}}\mbox{ for all } w,v\in W_n,
\]
as well as
\begin{equation}\label{eq:normeDUVprimebroken}
 \Vert \widehat{\partial}_b w\Vert_{V'} = \sup_{v\in V,\Vert v\Vert_V = 1}\langle \partial_b w,P^V_n  v\rangle_{\mathbb{H}} = \sup_{v\in W_n,\Vert v\Vert_V = 1}\langle \partial_b w,v\rangle_{\mathbb{H}}\mbox{ for all }w\in W_n.
\end{equation}

\begin{lem}\label{lem:normequivV}
There exists $\ctel{cte:equiv}>0$ only depending on $q$ such that
\[
\frac 1 {\cter{cte:equiv}}  \sum_{m=1}^N k\sum_{j=0}^q \Vert v_j^{(m)}\Vert_U^2 \le \Vert v \Vert_{V}^2 \le  \cter{cte:equiv} \sum_{m=1}^N k\sum_{j=0}^q \Vert v_j^{(m)}\Vert_U^2\mbox{ for all }v\in W_n.
 \]
\end{lem}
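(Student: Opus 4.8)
The plan is to reduce the claimed equivalence to a single reference interval. Since $\Vert v\Vert_V^2 = \sum_{m=1}^N\int_{(m-1)k}^{mk}\Vert v(t)\Vert_U^2{\rm d}t$ and the right-hand side of the asserted inequalities is likewise a sum over $m=1,\ldots,N$, it suffices to establish, for each fixed $m$, a two-sided bound between $\int_{(m-1)k}^{mk}\Vert v(t)\Vert_U^2{\rm d}t$ and $k\sum_{j=0}^q\Vert v_j^{(m)}\Vert_U^2$ with constants independent of $m$, $k$, $n$ and $N$. Fixing $m$ and performing the affine change of variable $s=(t-(m-1)k)/k$, so that ${\rm d}t=k\,{\rm d}s$, I would first rewrite
\[
\int_{(m-1)k}^{mk}\Vert v(t)\Vert_U^2{\rm d}t = k\int_0^1\Big\Vert\sum_{j=0}^q s^j v_j^{(m)}\Big\Vert_U^2{\rm d}s = k\sum_{i,j=0}^q G_{ij}\langle v_i^{(m)},v_j^{(m)}\rangle_U,
\]
where $G_{ij}=\int_0^1 s^{i+j}{\rm d}s=\frac 1{i+j+1}$ is exactly the Gram matrix of the monomials $1,s,\ldots,s^q$ in $L^2((0,1))$ studied in Appendix \ref{sec:gram}. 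Thus everything reduces to comparing, on $U_n^{q+1}$, the quadratic form $\mathbf p\mapsto\sum_{i,j}G_{ij}\langle p_i,p_j\rangle_U$ with $\mathbf p\mapsto\sum_j\Vert p_j\Vert_U^2$.

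Next I would exploit the positive definiteness of $G$: because the monomials $1,s,\ldots,s^q$ are linearly independent in $L^2((0,1))$, their Gram matrix $G$ is symmetric positive definite, so its eigenvalues satisfy $0<\lambda_{\min}(G)\le\lambda_{\max}(G)$, both depending only on $q$. To pass from these scalar bounds to the $U_n$-valued form, I would fix an orthonormal basis $(e_1,\ldots,e_d)$ of $U_n$ for $\langle\cdot,\cdot\rangle_U$ and write $p_j=\sum_\ell c_{j\ell}e_\ell$, so that $\langle p_i,p_j\rangle_U=\sum_\ell c_{i\ell}c_{j\ell}$ and hence
\[
\sum_{i,j=0}^q G_{ij}\langle p_i,p_j\rangle_U = \sum_{\ell=1}^d \langle G\mathbf c_\ell,\mathbf c_\ell\rangle_{\R^{q+1}},\qquad \mathbf c_\ell=(c_{0\ell},\ldots,c_{q\ell}).
\]
Summing the scalar estimates $\lambda_{\min}(G)|\mathbf c_\ell|^2\le\langle G\mathbf c_\ell,\mathbf c_\ell\rangle\le\lambda_{\max}(G)|\mathbf c_\ell|^2$ over $\ell$, and using $\sum_\ell|\mathbf c_\ell|^2=\sum_j\Vert p_j\Vert_U^2$, yields
\[
\lambda_{\min}(G)\sum_{j=0}^q\Vert p_j\Vert_U^2 \le \sum_{i,j=0}^q G_{ij}\langle p_i,p_j\rangle_U \le \lambda_{\max}(G)\sum_{j=0}^q\Vert p_j\Vert_U^2 .
\]

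Finally I would apply this interval by interval with $p_j=v_j^{(m)}$, multiply by $k$ and sum over $m=1,\ldots,N$, which gives the assertion with $\cter{cte:equiv}=\max\big(\lambda_{\max}(G),1/\lambda_{\min}(G)\big)$; this depends only on $q$ (the size of $G$), as required. I do not anticipate a serious difficulty: the only point requiring care is the transfer from the scalar eigenvalue estimates for the Hilbert matrix $G$ to the $U_n$-valued quadratic form, which the orthonormal-basis decomposition handles, together with the invertibility (equivalently, positive definiteness) of $G$, which is precisely the structural fact underlying Appendix \ref{sec:gram}.
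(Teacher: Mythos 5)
Your proposal is correct and follows essentially the same route as the paper: rewrite $\Vert v\Vert_V^2$ as $\sum_{m=1}^N k\sum_{i,j=0}^q \frac{1}{i+j+1}\langle v_i^{(m)},v_j^{(m)}\rangle_U$ and invoke the positive definiteness of the Gram matrix from Appendix \ref{sec:gram}. The only difference is that you spell out the (correct) transfer from the scalar eigenvalue bounds to the $U$-valued quadratic form via an orthonormal basis of $U_n$, a step the paper leaves implicit.
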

\begin{proof}
 We have
 \[
  \Vert v\Vert_{V}^2 = \sum_{m=1}^N k  \sum_{i=0}^q\sum_{j=0}^q \frac 1 {i+j+1} \langle v_i^{(m)},v_j^{(m)}\rangle_U,
 \]
 and the result holds from the fact that the Gram matrix $A$ of Appendix \ref{sec:gram} is symmetric definite positive.
\end{proof}

\begin{lem}\label{lem:controldg}
There exists $\ctel{cte:cq}>0$ only depending on $q$ such that
\begin{equation}\label{eq:majdbbyd}
\Vert \widehat{\partial}_b w\Vert_{V'} \le  \cter{cte:cq} \Vert \widehat{\partial} w\Vert_{V'} \mbox{ for all }w\in W_n,
\end{equation}
 and
\begin{equation}\label{eq:minoru}
\sum_{m=1}^N \Vert w_0^{(m)}- w((m-1)k)\Vert_H^2 \le  \cter{cte:cq} \Vert \widehat{\partial} w\Vert_{V'}\Vert w \Vert_V\mbox{ for all }w\in W_n.
\end{equation}
\end{lem}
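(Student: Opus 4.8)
The plan is to prove both estimates by localizing to the time slabs $I_m:=((m-1)k,mk)$ and rescaling each to the reference interval $(0,1)$ through $s=\frac{t-(m-1)k}{k}$. On every $I_m$ the functions $w$, $\partial_b w$, $\partial w$ and all test functions are $U_n$-valued polynomials of degree $\le q$, so each estimate becomes a statement on the finite-dimensional space $\mathcal P^q((0,1);\mathbb R)$ tensored with $U_n$. The crucial point is that every operator I shall use acts only on the scalar (time) factor; tensoring with $\mathrm{id}_{U_n}$ neither sees $U_n$ nor the Gram matrix of the pair $(\Vert\cdot\Vert_H,\Vert\cdot\Vert_U)$, so all constants reduce to scalar operator norms on $\mathcal P^q((0,1);\mathbb R)$ and depend on $q$ alone. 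I use throughout that for $v\in V_n$ one has $\langle\widehat\partial w,v\rangle_{V',V}=\langle\partial w,v\rangle_{\mathbb H}$ (by \eqref{eq:defdhatfromd}, since $P^V_n v=v$), and likewise for $\widehat\partial_b w$ via \eqref{eq:normeDUVprimebroken}.

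For \eqref{eq:majdbbyd}, formula \eqref{eq:defqnevolstrdg} reads on $I_m$ as $\partial w=\partial_b w+\frac1k\psi_0\big(\tfrac{\cdot-(m-1)k}{k}\big)J_m$ with $J_m:=w_0^{(m)}-w((m-1)k)$. Since $\partial_b w$ has degree $\le q-1$ while $\psi_0$ has degree exactly $q$ (its leading coefficient being the nonzero integer $(A^{-1})_{0,q}$ of Appendix~\ref{sec:gram}), the top coefficient of $\partial w$ recovers $J_m$, and then $\partial_b w=\partial w-\frac1k\psi_0 J_m$. After rescaling this exhibits $\partial_b w=(r\otimes\mathrm{id}_{U_n})\partial w$ for a fixed scalar operator $r$ on $\mathcal P^q((0,1);\mathbb R)$, independent of $m$ and of $U_n$. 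Let $R\colon V_n\to V_n$ be the induced (block-diagonal) operator and $R^\star$ its adjoint for the $\mathbb H$ inner product; $R^\star$ is again of the form $(r^\star\otimes\mathrm{id}_{U_n})$ slab by slab, and $\Vert R^\star\Vert_{V\to V}=\Vert r\Vert_{L^2(0,1)}$. Then for $v\in V_n$, $\langle\widehat\partial_b w,v\rangle_{V',V}=\langle\partial_b w,v\rangle_{\mathbb H}=\langle\partial w,R^\star v\rangle_{\mathbb H}=\langle\widehat\partial w,R^\star v\rangle_{V',V}\le\Vert\widehat\partial w\Vert_{V'}\,\Vert r\Vert\,\Vert v\Vert_V$. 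Taking the supremum over $\Vert v\Vert_V=1$ in \eqref{eq:normeDUVprimebroken} yields \eqref{eq:majdbbyd} with $\cter{cte:cq}=\Vert r\Vert$.

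For \eqref{eq:minoru} the idea is to test $\widehat\partial w$ against a function that annihilates the broken derivative and isolates the jumps. Let $\chi\in\mathcal P^q((0,1);\mathbb R)$ be the rescaled Legendre polynomial of degree $q$ normalized by $\chi(0)=1$ and $\int_0^1\chi(s)s^j\,ds=0$ for $j=0,\dots,q-1$, and set $v\in V_n$ by $v|_{I_m}(t)=\chi\big(\tfrac{t-(m-1)k}{k}\big)J_m$. The orthogonality $\chi\perp\mathcal P^{q-1}$ forces $\langle\partial_b w,v\rangle_{\mathbb H}=0$ (on each slab $\langle\partial_b w,v\rangle$ integrates $\chi$ against a polynomial of degree $\le q-1$), while the moment identity \eqref{eq:defjumpdg}, together with $\int_0^1\psi_0\chi=\chi(0)=1$, gives $\langle\partial w,v\rangle_{\mathbb H}=\sum_m\langle J_m,J_m\rangle_H=\sum_m\Vert J_m\Vert_H^2$. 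Hence $\sum_m\Vert J_m\Vert_H^2=\langle\widehat\partial w,v\rangle_{V',V}\le\Vert\widehat\partial w\Vert_{V'}\Vert v\Vert_V$, where $\Vert v\Vert_V=\big(\int_0^1\chi^2\big)^{1/2}\big(\sum_m k\Vert J_m\Vert_U^2\big)^{1/2}$. It remains to absorb the last factor into $\Vert w\Vert_V$: writing $w_0^{(m)}$ and $w((m-1)k)=\sum_j w_j^{(m-1)}$ in terms of the polynomial coefficients, $\Vert J_m\Vert_U$ is a fixed linear combination (with $q$-dependent constants) of $\Vert w_j^{(m)}\Vert_U$ and $\Vert w_j^{(m-1)}\Vert_U$, so summation and Lemma~\ref{lem:normequivV} give $\sum_m k\Vert J_m\Vert_U^2\le C_q\Vert w\Vert_V^2$, and \eqref{eq:minoru} follows after enlarging $\cter{cte:cq}$.

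The main obstacle is not any single computation but securing the $q$-only dependence of the constants: the dual norm $\Vert\widehat\partial w\Vert_{V'}$ mixes the $\mathbb H$-pairing with the $V$-constraint, and a priori one fears the equivalence constants might degenerate with $U_n$ or with the time step. Both difficulties dissolve once everything is phrased as a scalar operator on $\mathcal P^q((0,1);\mathbb R)$ tensored with $\mathrm{id}_{U_n}$, since such operators leave the $H$- and $U$-Gram matrices untouched and their norms are scale invariant. The one point requiring genuine care is the control of the boundary traces $w_0^{(m)},\,w((m-1)k)$ by local $L^2$-norms (the interior slabs $2\le m\le N$ being handled directly by the reference-interval norm equivalence of Lemma~\ref{lem:normequivV}); this is exactly where the finite-dimensionality of $\mathcal P^q$ is indispensable.
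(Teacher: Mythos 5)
Your argument is essentially the paper's: both estimates are obtained by pairing $\partial w$ against the (up to normalization unique) degree-$q$ polynomial orthogonal to $\mathcal P^{q-1}$ on the reference interval --- your $\chi$ is exactly the paper's $\psi_q/\psi_{q,0}$, since $\int_0^1\psi_0(s)p(s)\,{\rm d}s=p(0)$ for every $p\in\mathcal{P}^q(\mathbb{R};\mathbb{R})$ --- so as to isolate the jumps $w_0^{(m)}-w((m-1)k)$, and the resulting test functions are bounded in $V$ by the coefficient norm equivalence of Lemma~\ref{lem:normequivV}; your operator-theoretic packaging of \eqref{eq:majdbbyd} (writing $\partial_b w=(r\otimes{\rm id})\partial w$ and passing to the adjoint) is a reformulation of the paper's substitution $v\mapsto v-\widetilde v$. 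One caveat, which you flag as ``requiring genuine care'' but do not resolve and which the paper's own proof shares: in the last step of \eqref{eq:minoru} the identity $w((m-1)k)=\sum_j w_j^{(m-1)}$ holds only for $m\ge 2$, whereas for $m=1$ the value $w(0)$ is a free degree of freedom of $W_n\subset G$ invisible to $\Vert w\Vert_V$, so the $m=1$ term of the sum is not controlled as stated (only the terms with $m\ge 2$ are actually needed later, in the proof of Lemma~\ref{lem:estimnoruTHdg}).
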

\begin{proof}
We first observe that, for any $m=1,\ldots,q$, we have
\begin{multline*}
 \int_{(m-1)k}^{mk} \partial w(t)\psi_q\Big(\frac {t-(m-1)k} k\Big){\rm d}t =  \int_{(m-1)k}^{mk} \partial_b w(t)\psi_q\Big(\frac {t-(m-1)k} k\Big){\rm d}t \\
 +  \frac  1 k\int_{(m-1)k}^{mk}\psi_0\Big(\frac {t-(m-1)k} k\Big)\psi_q\Big(\frac {t-(m-1)k} k\Big){\rm d}t\  (w_0^{(m)}- w((m-1)k)),
\end{multline*}
where the polynomial $\psi_q$ is defined by \eqref{eq:defpsidg}. Since $\partial_b(w)(t)\in \mathcal{P}^{q-1}(\mathbb{R};U_n)$ \\ for a.e. $t\in ((m-1)k,mk)$, the first term on the right-hand-side of the above equation vanishes. Moreover, we have
\[
  \frac  1 k\int_{(m-1)k}^{mk}\psi_0\Big(\frac {t-(m-1)k} k\Big)\psi_q\Big(\frac {t-(m-1)k} k\Big){\rm d}t = \int_{0}^{1}\psi_0(s)\psi_q(s){\rm d}s = \psi_{q,0}\neq 0,
\]
by the results of Appendix \ref{sec:gram}. This leads to
\[
 w_0^{(m)}- w((m-1)k) = \frac 1 {\psi_{q,0}}\int_{(m-1)k}^{mk} \partial w(t)\psi_q\Big(\frac {t-(m-1)k} k\Big){\rm d}t.
\]
Let $v\in W_n$. We have
\[
 \langle \partial_b w,v \rangle_{\mathbb{H}} = \langle  \partial w,v \rangle_{\mathbb{H}} - \langle  \partial w,\widetilde{v} \rangle_{\mathbb{H}},
\]
where $\widetilde{v}\in W_n$ is such that, for a.e. $t\in ((m-1)k,mk)$, for any $m=1,\ldots,q$,
\[
 \widetilde{v}(t) = \frac 1 {\psi_{q,0}}\psi_q\Big(\frac {t-(m-1)k} k\Big) v_0^{(m)}.
\]
Since $\Vert \widetilde{v} \Vert_V\le \ctel{cte:equivvw}\Vert v \Vert_V$ by Lemma \ref{lem:normequivV}, we conclude \eqref{eq:majdbbyd}.

Moreover, we have
\[
 \sum_{m=1}^N \Vert w_0^{(m)}- w((m-1)k)\Vert_H^2 = \langle  \partial w,\widetilde{w}\rangle_{\mathbb{H}},
\]
with, for a.e. $t\in ((m-1)k,mk)$, for any $m=1,\ldots,q$,
\[
 \widetilde{w}(t) = (w_0^{(m)}- w((m-1)k))\frac 1 {\psi_{q,0}}\psi_q\Big(\frac {t-(m-1)k} k\Big),
\]
which implies \eqref{eq:minoru} again using Lemma \ref{lem:normequivV}.

\end{proof}

\begin{lem}\label{lem:estimnoruTHdg}  There exists $\ctel{cte:dgt}>0$, only depending on $q$,  $T$ and on $C_H$ (see \eqref{eq:contemb}),  such that 
\begin{equation}\label{eq:estimnoruTHdg}
\sup_{t\in(0,T]} \Vert w(t)\Vert_H^2 \le \cter{cte:dgt} (\Vert \widehat{\partial} w\Vert_{V'}^2 +\Vert  w \Vert_V^2)\mbox{ for all }w\in W_n.
\end{equation}
Therefore there exists $\rho>0$, only depending on $q$, $T$ and $C_H$, such that
\begin{equation}\label{eq:estimusupbynorminZdg} 
\sup_{t\in[0,T]}\Vert w(t)\Vert_H  \le  \rho \Big( \Vert \widehat{\partial} w \Vert_{V'}^2 + \Vert  w\Vert_V^2 +\Vert w(0)\Vert_H^2 + \Vert w(T)\Vert_H^2\Big)^{1/2}\mbox{ for all }w\in W_n.
\end{equation}
 \end{lem}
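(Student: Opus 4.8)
The plan is to reduce the supremum to two quantities that the preliminary lemmas already control: the nodal values $\Vert w(mk)\Vert_H$ and the broken derivative $\partial_b w$. The key structural fact I will use repeatedly is that on each interval $I_m=((m-1)k,mk]$ the function $w$ is a genuine polynomial, so elementary calculus and scaling arguments apply with constants depending only on the degree $q$.

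First I would carry out a \emph{within-interval reduction}. For $t\in I_m$ one has $w(t)-w(mk)=\int_{mk}^t\partial_b w$, so by Cauchy--Schwarz $\Vert w(t)-w(mk)\Vert_H^2\le k\int_{(m-1)k}^{mk}\Vert\partial_b w\Vert_H^2$. Hence $\sup_{t\in I_m}\Vert w(t)\Vert_H^2\le 2\Vert w(mk)\Vert_H^2+2k\Vert\partial_b w\Vert_{L^2(I_m;H)}^2$, and bounding the maximum over $m$ of the last term by the full sum $k\Vert\partial_b w\Vert_{\mathbb{H}}^2$ gives
\[
\sup_{t\in(0,T]}\Vert w(t)\Vert_H^2\le 2\max_{1\le m\le N}\Vert w(mk)\Vert_H^2+2k\Vert\partial_b w\Vert_{\mathbb{H}}^2 .
\]
It then remains to estimate the two terms on the right by $\Vert\widehat{\partial} w\Vert_{V'}^2+\Vert w\Vert_V^2$ with constants depending only on $q$, $T$ and $C_H$. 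For the derivative term I would use $\langle\partial_b w,\partial_b w\rangle_{\mathbb{H}}=\langle\widehat{\partial}_b w,\partial_b w\rangle_{V',V}$ (valid since $\partial_b w\in W_n$) together with \eqref{eq:majdbbyd}, giving $\Vert\partial_b w\Vert_{\mathbb{H}}^2\le\Vert\widehat{\partial}_b w\Vert_{V'}\Vert\partial_b w\Vert_V\le \cter{cte:cq}\Vert\widehat{\partial} w\Vert_{V'}\Vert\partial_b w\Vert_V$, and then the polynomial inverse inequality $\Vert\partial_b w\Vert_V\le \frac{C_q}{k}\Vert w\Vert_V$ (with $C_q$ depending only on $q$). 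The decisive point is that the factor $1/k$ from the inverse inequality cancels exactly against the prefactor $k$, so that $k\Vert\partial_b w\Vert_{\mathbb{H}}^2\le C_q\Vert\widehat{\partial} w\Vert_{V'}\Vert w\Vert_V\le \tfrac{C_q}{2}(\Vert\widehat{\partial} w\Vert_{V'}^2+\Vert w\Vert_V^2)$ by Young.

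For the nodal values I would integrate $\langle\partial w,w\rangle_H$ over a block of intervals. Since $w$ is polynomial on each $I_m$ and $\frac1k\int_{I_m}\psi_0\,w=w_0^{(m)}$ by \eqref{eq:defjumpdg}, one obtains the discrete energy identity
\[
\int_{pk}^{Mk}\langle\partial w,w\rangle_H=\tfrac12\Vert w(Mk)\Vert_H^2-\tfrac12\Vert w(pk)\Vert_H^2+\tfrac12\sum_{m=p+1}^M\Vert w_0^{(m)}-w((m-1)k)\Vert_H^2 .
\]
Because $w\mathbf{1}_{(pk,Mk]}\in W_n$, the left-hand side equals $\langle\widehat{\partial} w,w\mathbf{1}_{(pk,Mk]}\rangle_{V',V}\le\Vert\widehat{\partial} w\Vert_{V'}\Vert w\Vert_V$, while the jump sum is dominated by the full sum controlled in \eqref{eq:minoru}; this takes care of both orderings $p<M$ and $p>M$. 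Choosing $p$ to minimise $\Vert w(pk)\Vert_H$ over $p=1,\dots,N$, and using $\Vert w(mk)\Vert_U^2\le\frac{C_q}{k}\Vert w\Vert_{L^2(I_m;U)}^2$ with $\Vert\cdot\Vert_H\le C_H\Vert\cdot\Vert_U$, gives $\Vert w(pk)\Vert_H^2\le\frac1N\sum_m\Vert w(mk)\Vert_H^2\le\frac{C_H^2 C_q}{T}\Vert w\Vert_V^2$. Inserting this yields $\Vert w(Mk)\Vert_H^2\le C(\Vert\widehat{\partial} w\Vert_{V'}^2+\Vert w\Vert_V^2)$ for every $M$, with $C=C(q,T,C_H)$.

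Combining the three estimates proves \eqref{eq:estimnoruTHdg} with $\cter{cte:dgt}$ depending only on $q$, $T$, $C_H$. Then \eqref{eq:estimusupbynorminZdg} is immediate: since $\sup_{t\in[0,T]}\Vert w(t)\Vert_H^2\le\Vert w(0)\Vert_H^2+\sup_{t\in(0,T]}\Vert w(t)\Vert_H^2$, adding $\Vert w(0)\Vert_H^2$ (and, harmlessly, $\Vert w(T)\Vert_H^2$) on the right and taking square roots gives the claim with $\rho$ depending only on $q$, $T$, $C_H$. The \emph{main obstacle} throughout is to keep all constants independent of $k$ and of $\mu_n$; the delicate spot is the worst-interval derivative term, where any estimate that stays in the strong norm $\Vert\partial_b w\Vert_{\mathbb{H}}$ produces a spurious $1/k$. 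This is precisely what Lemma~\ref{lem:controldg} repairs, by letting one pass to the weak norm $\Vert\widehat{\partial} w\Vert_{V'}$ so that the inverse-inequality factor $1/k$ is absorbed, and by controlling the nodal values through the minimal node rather than through the free value $w(0)$, which the supremum over $(0,T]$ never sees.
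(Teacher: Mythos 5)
Your proof is correct, and the constants you produce do depend only on $q$, $T$ and $C_H$ as required; but it is organised differently from the paper's. The paper estimates the difference $\Vert w(t)\Vert_H^2-\Vert w(s)\Vert_H^2$ for two \emph{arbitrary} points $s,t\in(0,T]$ --- splitting $(s,t)$ into two partial subintervals, where only $\partial_b$ acts, and a block of whole subintervals, where the telescoping identity with the jump terms applies --- and then averages over $s\in(0,T)$, using $\Vert w\Vert_{\mathbb{H}}\le C_H\Vert w\Vert_V$ to absorb the base value. You instead split the supremum into the nodal values $\Vert w(mk)\Vert_H$ and the within-interval variation: the latter is controlled by $k\Vert\partial_b w\Vert_{\mathbb{H}}^2$ together with the polynomial inverse estimate $\Vert\partial_b w\Vert_V\le C_q k^{-1}\Vert w\Vert_V$ (which indeed follows from Lemma~\ref{lem:normequivV} by scaling, with $C_q$ depending only on $q$), the former by the same telescoping identity, averaged over the discrete nodes via an endpoint inverse estimate rather than over continuous time. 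Both routes rest on Lemma~\ref{lem:controldg} (for \eqref{eq:majdbbyd} and \eqref{eq:minoru}) and on a mean-value argument, so they are close cousins; but yours has one genuine advantage: every duality pairing you invoke, namely $\langle\widehat{\partial}_b w,\partial_b w\rangle_{V',V}$ and $\langle\widehat{\partial} w,w\mathbf{1}_{(pk,Mk]}\rangle_{V',V}$, tests against an element of $V_n$, which is exactly the situation covered by \eqref{eq:normeDUVprimebroken}, whereas the paper's bound $\int_s^t\langle\partial_b w,w\rangle_H\,{\rm d}\tau\le\Vert\widehat{\partial}_b w\Vert_{V'}\Vert w\Vert_V$ for interior $s,t$ implicitly tests against the truncation $w\mathbf{1}_{(s,t)}$, which is not piecewise polynomial and so needs an extra word of justification. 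The price you pay is the pair of inverse inequalities, whose $q$-dependence you correctly identify as the point to watch; the cancellation of the factor $k$ against $k^{-1}$ in the derivative term, and the use of the minimal node rather than $w(0)$, are exactly the right moves.
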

\begin{proof}
We notice that, in the case where $(m-1)k< s\le t \le mk$,
\[
 \Vert w(t)\Vert_H^2 -  \Vert w(s)\Vert_H^2 = 2\int_s^t  \langle \partial_b w(\tau),w(\tau)\rangle_H {\rm d}\tau 
 \le 2 \Vert \widehat{\partial}_b w\Vert_{V'}\Vert w \Vert_{V} \le (\cter{cte:cq} \Vert \widehat{\partial} w\Vert_{V'}^2 + \Vert w \Vert_{V}^2).
\]

We have, in the case where $s < mk \le (m'-1)k\le t \le m' k $ with $m < m'$, that
\[
  \Vert w(t)\Vert_H^2 -  \Vert w(m'k)\Vert_H^2 = - 2\int_t^{m'k}  \langle \partial_b w(\tau),w(\tau)\rangle_H {\rm d}\tau,
\]
\[
  \Vert w(m'k)\Vert_H^2 +\sum_{p=m+1}^{m'} \Vert w_0^{(p)} - w((p-1)k) \Vert_H^2-  \Vert w(mk)\Vert_H^2 = 2\int_{mk}^{m'k}  \langle \partial w(\tau),w(\tau)\rangle_H {\rm d}\tau,
\]
\[
  \Vert w(mk)\Vert_H^2 -  \Vert w(s)\Vert_H^2 =  2\int_s^{mk}  \langle \partial_b w(\tau),w(\tau)\rangle_H {\rm d}\tau.
\]
Adding the three above equalities, this yields
\begin{multline}\label{eq:majdif}
 \Vert w(t)\Vert_H^2 -  \Vert w(s)\Vert_H^2 = - 2\int_t^{m'k}  \langle \partial_b w(\tau),w(\tau)\rangle_H {\rm d}\tau+2\int_{mk}^{m'k}  \langle \partial w(\tau),w(\tau)\rangle_H {\rm d}\tau \\ +2\int_s^{mk}  \langle \partial_b w(\tau),w(\tau)\rangle_H {\rm d}\tau -\sum_{p=m+1}^{m'} \Vert w_0^{(p)} - w((p-1)k) \Vert_H^2.
\end{multline}
This implies, using Lemma \ref{lem:controldg}, that for any $s,t\in(0,T]$,
\[
 | \Vert w(t)\Vert_H^2 -  \Vert w(s)\Vert_H^2| \le \ctel{cte:psil} \Vert \widehat{\partial} w\Vert_{V'} \Vert w \Vert_V.
\]
This implies
\[
 \Vert w(t)\Vert_H^2 \le\Vert w(s)\Vert_H^2 +  \cter{cte:psil} \Vert \widehat{\partial} w\Vert_{V'} \Vert w \Vert_V.
\]
Integrating the preceding relation on $s\in (0,T)$ concludes the proof of \eqref{eq:estimnoruTHdg}. Then \eqref{eq:estimusupbynorminZdg} follows from the presence of the term $\Vert w(0)\Vert_H$ in the right hand side.
\end{proof}

\begin{lem}\label{lem:estimbnbndg}
 	The spaces $X_n$ and $Y_n$ defined by \eqref{eq:defXfromWdg} and \eqref{eq:defYdg} satisfy Hypothesis \eqref{eq:betabnbn} of Theorem \ref{thm:bnbnonconf} (without any CFL condition), with $\beta>0$ only depending on $q$, $T$, $C_H$, $\alpha$ and $M$.
 \end{lem}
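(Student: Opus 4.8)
The plan is to reproduce, almost verbatim, the structure of the implicit Euler proof in Lemma~\ref{lem:estimnoruTHthetaim}. Namely, I would establish the coercivity-type inequality \eqref{eq:condlim} linking $\langle\widehat{\partial}w,w\rangle_{V',V}$, the norms $\Vert\widehat{\partial}w\Vert_{V'}$ and $\Vert w\Vert_V$, and the boundary values $w(0),w(T)$; then apply Lemma~\ref{lem:suffbnb} with $\widehat{X}=\{(\widehat{\partial}w,w,w(0),w(T)):w\in W_n\}$; and finally convert the resulting lower bound into the full $Z$-norm $\Vert(\widehat{\partial}w,w)\Vert_Z$ by means of \eqref{eq:estimusupbynorminZdg}. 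The feature that makes this work \emph{without} a CFL condition is that the modified discrete derivative of \eqref{eq:defqnevolstrdg} yields an energy \emph{identity} with a favourable sign, in contrast to the explicit and Crank--Nicolson cases.

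First I would compute $\langle\widehat{\partial}w,w\rangle_{V',V}=\langle\partial w,w\rangle_{\mathbb{H}}$ interval by interval. On $((m-1)k,mk)$ the broken part is the genuine derivative of a polynomial and contributes $\tfrac12\Vert w(mk)\Vert_H^2-\tfrac12\Vert w_0^{(m)}\Vert_H^2$, while the $\psi_0$-correction term of \eqref{eq:defqnevolstrdg}, evaluated through the reproducing property \eqref{eq:defjumpdg}, contributes exactly $\langle w_0^{(m)}-w((m-1)k),w_0^{(m)}\rangle_H$. Applying the polarization identity $\langle a-b,a\rangle=\tfrac12\Vert a\Vert^2-\tfrac12\Vert b\Vert^2+\tfrac12\Vert a-b\Vert^2$ and summing over $m$, the bulk terms telescope and I obtain
\[
\langle\widehat{\partial}w,w\rangle_{V',V}=\tfrac12\Vert w(T)\Vert_H^2-\tfrac12\Vert w(0)\Vert_H^2+\tfrac12\sum_{m=1}^N\Vert w_0^{(m)}-w((m-1)k)\Vert_H^2 .
\]
Since the jump sum is non-negative, this gives $\langle\widehat{\partial}w,w\rangle_{V',V}\ge\tfrac12(\Vert w(T)\Vert_H^2-\Vert w(0)\Vert_H^2)$, the exact analogue of \eqref{eq:ineqestimthetabim}.

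Next I would bound $\Vert w(T)\Vert_H^2$ from above. Estimate \eqref{eq:estimnoruTHdg} of Lemma~\ref{lem:estimnoruTHdg} gives $\Vert w(T)\Vert_H^2\le C\,(\Vert\widehat{\partial}w\Vert_{V'}^2+\Vert w\Vert_V^2)$ for a constant $C$ depending only on $q,T,C_H$, whence there is $C'>0$ (depending only on $q,T,C_H,\alpha,M$) with $\tfrac{\alpha^2}{12\,M^3}(\Vert\widehat{\partial}w\Vert_{V'}^2+\Vert w\Vert_V^2)\ge C'\Vert w(T)\Vert_H^2$. Adding this to the lower bound of the previous paragraph yields
\[
\langle\widehat{\partial}w,w\rangle_{V',V}+\tfrac{\alpha^2}{12\,M^3}(\Vert w\Vert_V^2+\Vert\widehat{\partial}w\Vert_{V'}^2)\ge(\tfrac12+C')\Vert w(T)\Vert_H^2-\tfrac12\Vert w(0)\Vert_H^2 ,
\]
which is precisely \eqref{eq:condlim} with $\omega=\tfrac12+C'$ and $\delta=C'$, exactly as in the $\Vert\Phi\Vert=1$ branch of Lemma~\ref{lem:estimnoruTHthetademi}. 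Since $\widehat{X}_2=V_n$ and $\widehat{X}_3=U_n$, so that $Y_n=\widehat{X}_2\times\widehat{X}_3$, Lemma~\ref{lem:suffbnb} (where the contraction property $\Vert\Phi\Vert\le1$ is used) provides $\widehat{\beta}>0$, depending only on $q,T,C_H,\alpha,M$, with
\[
\sup_{y\in Y_n,\Vert y\Vert_Y=1}b((\widehat{\partial}w,w),y)\ge\widehat{\beta}\Big(\Vert\widehat{\partial}w\Vert_{V'}^2+\Vert w\Vert_V^2+\Vert w(0)\Vert_H^2+\Vert w(T)\Vert_H^2\Big)^{1/2},
\]
and \eqref{eq:estimusupbynorminZdg} then upgrades the right-hand side to $\beta\Vert(\widehat{\partial}w,w)\Vert_Z$, giving the desired \eqref{eq:betabnbn}.

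The main obstacle, and the only genuinely new point compared with the Euler analysis, is the energy identity of the second paragraph: one must check that $\widehat{\partial}$ as defined through \eqref{eq:defqnevolstrdg}--\eqref{eq:defjumpdg} is designed so that its pairing with $w$ reproduces both the telescoping boundary terms \emph{and} non-negative jump contributions, for it is this sign that renders the estimate unconditional in $k$. Once that identity is secured, the upper bound on $\Vert w(T)\Vert_H$ and the sup-norm control having already been provided by Lemmas~\ref{lem:controldg} and~\ref{lem:estimnoruTHdg}, the remainder is a routine transcription of the implicit-Euler argument.
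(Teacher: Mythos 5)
Your proposal is correct and follows essentially the same route as the paper: the same decomposition of $\langle\widehat{\partial}w,w\rangle_{V',V}$ into the broken-derivative and jump contributions leading to the identity $2\langle\widehat{\partial}w,w\rangle_{V',V}=\Vert w(T)\Vert_H^2+\sum_{m}\Vert w_0^{(m)}-w((m-1)k)\Vert_H^2-\Vert w(0)\Vert_H^2$, then the bound on $\Vert w(T)\Vert_H^2$ via \eqref{eq:estimnoruTHdg}, Lemma~\ref{lem:suffbnb} with $\widehat{X}_2=V_n$, $\widehat{X}_3=U_n$, and finally \eqref{eq:estimusupbynorminZdg} to recover the full $Z$-norm. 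The only cosmetic difference is that you obtain the energy identity by polarization and telescoping where the paper integrates $\langle w',w\rangle_H$ directly on each subinterval; these are the same computation.
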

 \begin{proof}
 We remark that
\[
 \langle\widehat{\partial} w,w \rangle_{V',V} =\sum_{m=1}^{N}\Big(\int_{(m-1)k}^{mk}\langle w'(t), w(t)\rangle_{ H }{\rm d}t + \langle w_0^{(m)}- w((m-1)k), w_0^{(m)}\rangle_{ H }\Big).
\]
Since we have
\[
 \int_{(m-1)k}^{mk}\langle w'(t), w(t)\rangle_{ H }{\rm d}t = \frac 1 2 \Vert w(mk)\Vert_H^2 - \frac 1 2 \Vert w_0^{(m)}\Vert_H^2,
\]
we get 
\[
2\langle\widehat{\partial} w,w \rangle_{V',V} =\Vert w(T )\Vert_H^2 + \sum_{m=1}^{N} \Vert w_0^{(m)}- w((m-1)k)\Vert_H^2-  \Vert w(0)\Vert_H^2,
\]
which yields
\[
 \langle\widehat{\partial} w,w \rangle_{V',V} \ge \frac 1 2  \Vert w(T)\Vert_H^2  - \frac 1 2\Vert  w(0)\Vert_H^2.
\]
We also notice that \eqref{eq:estimnoruTHdg} implies
\[
 \frac {\alpha^2}{12\, M^3} (\Vert  w \Vert_V^2 + \Vert \widehat{\partial} w\Vert_{V'}^2 )\ge \frac {\alpha^2}{12\, M^3\cter{cte:dgt}}\Vert w(T)\Vert_H^2.
\]
Adding the two previous inequalities yields  \eqref{eq:condlim}
with $\omega = \frac 1 2  +\frac {\alpha^2}{12\, M^3\cter{cte:dgt}}$ and $\delta = \frac {\alpha^2}{12\, M^3\cter{cte:dgt}}$ (hence no CFL condition will be required for satisfying the BNB estimate). We can now apply Lemma  \ref{lem:suffbnb}, letting $\widehat{X} = \{(\widehat{\partial} w,w,w(0),w(T)): w\in W_n\}$, since the quantities $\widehat{X}_i$ defined in the lemma satisfy $\widehat{X}_2=V_n$ and $ \widehat{X}_3 = U_n$. We get the existence of $\widehat{\beta}$,  only depending on $\alpha$, $M$, $T$ and $C_H$, such that
\[
	 \sup_{\substack{y\in Y_n\\ \Vert y\Vert_Y = 1}} b((\widehat{\partial} w,w),y) \ge \widehat{\beta} \Big( \Vert \widehat{\partial} w \Vert_{V'}^2 + \Vert  w\Vert_V^2 +\Vert w(0)\Vert_H^2 + \Vert w(T)\Vert_H^2\Big)^{1/2}\hbox{ for all }w\in W_n.
\]
Using Lemma \ref{lem:estimnoruTHdg} concludes the existence of $\beta$ (with the same dependencies) such that \eqref{eq:betabnbn} holds. 

 \end{proof}

 We can now give an error estimate for the discrete solution.
 
 \begin{thm}\label{thm:errestdg} Let  $\beta>0$ be given by Lemma \ref{lem:estimbnbndg}.
 Let $U_n\subset U$ be a finite dimensional subspace of $U$ and let $N\in\mathbb{N}$ be given. 
 Then the following holds. 
 
 For each $f\in V'$ and $\xi_0\in H$, the time discontinuous Galerkin Scheme \eqref{eq:schemeinidg}-\eqref{eq:schememdg} has a unique solution $w_n\in W_n$. Moreover
 \begin{multline}\label{eq:errestdg}
\Big(\Vert u' - \widehat{\partial}w_n\Vert_{V'}^2 +  \Vert u - w_n\Vert_{V}^2 + \sup_{t\in[0,T]} \Vert u(t) - w_n(t)\Vert_{H}^2\Big)^{1/2} \le \\
  (1+\frac {2M}{\beta})  \inf\{\Big(\Vert u' - \widehat{\partial}w\Vert_{V'}^2 +  \Vert u - w\Vert_{V}^2 + \sup_{t\in[0,T]} \Vert u(t) - w(t)\Vert_{H}^2\Big)^{1/2}: w\in W_n\},
 \end{multline}
 where $u$ is the unique solution of Problem \eqref{eq:5.1}.
 \end{thm}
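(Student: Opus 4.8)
The plan is to deduce this theorem from the abstract error estimate of Lemma~\ref{lem:errest}, exactly as Theorem~\ref{thm:erresttheta} was deduced for the Euler $\theta$-scheme; all the genuine analytic work has already been carried out in the preliminary lemmas of this section. Recall from Section~\ref{sec:extended} that, with $Z = V'\times G$ and $Y = V\times H$, the form $b$ defined by \eqref{eq:defb} is continuous with constant $2M$, and that Problem~\eqref{eq:5.1} is equivalent to the continuous variational problem~\eqref{eq:pbcontinZ}, i.e. finding $\widehat{x}\in X$ with $b(\widehat{x},y)=L(y)$ for all $y\in Y$.

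First I would record the existence and uniqueness of the two solutions. By Lemma~\ref{lem:estimbnbndg} the BNB condition~\eqref{eq:betabnbn} holds for the pair $(X_n,Y_n)$ of \eqref{eq:defXfromWdg}--\eqref{eq:defYdg} with the stated $\beta>0$ and, crucially, \emph{without} any CFL restriction. Since $X_n$ and $Y_n$ have the same finite dimension, \eqref{eq:betabnbn} makes the restriction of $b$ to $X_n\times Y_n$ nondegenerate, so Problem~\eqref{eq:schemeinZYdg} has a unique solution $x_n\in X_n$; writing $x_n=(\widehat{\partial}w_n,w_n)$ and invoking the equivalence of Lemma~\ref{lem:schemeexdg} yields the unique solution $w_n\in W_n$ of the discontinuous Galerkin scheme \eqref{eq:schemeinidg}--\eqref{eq:schememdg}. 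On the continuous side, Theorem~\ref{thm:5.3} (equivalently Theorem~\ref{thm:cv}) provides the unique $u\in W$ solving \eqref{eq:5.1}; since $u\in W$, the pair $\widehat{x}:=(u',u)$ belongs to $X$ by \eqref{eq:defhatw} and solves \eqref{eq:pbcontinZ}.

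With these two solutions in hand I would apply Lemma~\ref{lem:errest}, taking its continuity constant to be $2M$. Estimate~\eqref{eq:errestchixbanach} then gives
\[
 \Vert x_n-\widehat{x}\Vert_Z \le \Big(1+\frac{2M}{\beta}\Big)\min_{x\in X_n}\Vert x-\widehat{x}\Vert_Z .
\]
It remains only to unfold the $Z$-norm: for $(z_1,z_2)\in Z$ one has $\Vert (z_1,z_2)\Vert_Z^2=\Vert z_1\Vert_{V'}^2+\sup_{t\in[0,T]}\Vert z_2(t)\Vert_H^2+\Vert z_2\Vert_V^2$, so that $\Vert x_n-\widehat{x}\Vert_Z^2$ is precisely the quantity under the square root on the left of \eqref{eq:errestdg}, while the minimum over $x\in X_n$ becomes the infimum over $w\in W_n$ through the bijection $w\mapsto(\widehat{\partial}w,w)$ of \eqref{eq:defXfromWdg}. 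This yields \eqref{eq:errestdg}.

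The only points requiring care — rather than a true obstacle — are bookkeeping ones: checking that $\widehat{x}$ genuinely lies in the closed subspace $X$ (which needs $u\in W$, hence the appeal to Theorem~\ref{thm:5.3}), and correctly tracking the continuity constant $2M$ of $b$ so that the prefactor is $1+\frac{2M}{\beta}$ and not $1+\frac{M}{\beta}$. The substantive content — the uniform BNB estimate with a $\beta$ independent of $N$ and of $U_n$ — has already been established in Lemma~\ref{lem:estimbnbndg} via the $H$-norm control of Lemma~\ref{lem:estimnoruTHdg}, so the theorem itself is a direct application of the abstract machinery.
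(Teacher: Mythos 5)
Your proposal is correct and follows the same route as the paper: the paper's proof is exactly the observation that Lemma~\ref{lem:estimbnbndg} supplies the BNB constant $\beta$ for \eqref{eq:betabnbn}, so that Lemma~\ref{lem:errest} (with continuity constant $2M$ of $b$) applies and yields \eqref{eq:errestdg} after unfolding the $Z$-norm. Your additional bookkeeping --- unique solvability of the discrete problem from the finite-dimensional BNB condition via Lemma~\ref{lem:schemeexdg}, and the identification of $\widehat{x}=(u',u)\in X$ via Theorem~\ref{thm:5.3} --- is implicit in the paper's terser argument and is entirely consistent with it.
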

 \begin{proof}
 Recalling the definitions of $Z$ and $Y$ from Section \ref{sec:extended}, we observe that $\beta$, given by Lemma \ref{lem:estimbnbndg}, satisfies  \eqref{eq:betabnbn}. Therefore, we apply Lemma \ref{lem:errest}, which provides \eqref{eq:errestdg}.
 \end{proof}

\subsection{Interpolation results}

 In the same way as above for the explicit Euler scheme, we now estimate the distance in $Z$ of a given function $w\in W$ from $W_n$ (recall that, again,  $W_n$ is not a subspace of $W$). 
 
 For a given $w\in H^{q+1}(0,T;U)$, we define the interpolation $w_n\in W_n$ by
 \begin{equation}\label{eq:definterpdg}
 w_{n,0}^{(0)} =P^U_n w(0)\hbox{ and }  w_{n,i}^{(m)}  =  \frac {k^i} {i!}P^U_n w^{[i]}((m-1)k)\mbox{ for all }m=1,\ldots,N \mbox{ and }  i=0,\ldots,q,
 \end{equation}
 where we denote by $w^{[i]}(t)$ the $i$th time derivative of $w$ for $i=0,\ldots,q+1$.
 Then
 \[
 w_n(0) =w_{n,0}^{(0)}\hbox{ and } w_n(t) = \sum_{i=0}^q \Big(\frac {t - (m-1)k} {k}\Big)^i w_{n,i}^{(m)}\mbox{ for all }m=1,\ldots,N\mbox{ and }t\in ((m-1)k,mk].
 \]

Recall that the following Taylor expansion formula holds:
\begin{multline}\label{eq:taylor}
 w(t) =\sum_{i=0}^q \frac {(t - (m-1)k)^i} {i!} w^{[i]}((m-1)k) + 
 \int_{(m-1)k}^t \frac {(s - (m-1)k)^q} {q!}w^{[q+1]}(s){\rm d}s \\
 \mbox{ for all }t\in ((m-1)k,mk].
\end{multline}
This provides, for any $m=1,\ldots,N$ and $t\in ((m-1)k,mk]$,
\begin{equation}\label{eq:taylorbis}
 P^U_n\circ w = w_n + T_n^{(m)} w\hbox{ with } T_n^{(m)} w(t) = \int_{(m-1)k}^t \frac {(s - (m-1)k)^q} {q!}P^U_n  w^{[q+1]}(s){\rm d}s.
\end{equation}

 We then have the following lemma.

 \begin{lem}\label{lem:interpinVdg} Let $w\in H^{q+1}(0,T;U)$ and let $w_n\in W_n$ be defined by \eqref{eq:definterpdg}.
 Then 
 \[
   \Vert w - P^V_n w\Vert_{V} \le \Vert w - w_n\Vert_{V} \le \delta_n(w) +   \frac {k^{q+1}}{q!} \Vert w^{[q+1]}\Vert_{V}.
 \]
 \end{lem}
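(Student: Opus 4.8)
The plan is to establish the two inequalities separately: the left-hand one is immediate from the projection property, while the right-hand one follows from the Taylor remainder identity \eqref{eq:taylorbis}.

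For the lower bound $\Vert w - P^V_n w\Vert_{V} \le \Vert w - w_n\Vert_{V}$, I would argue as follows. By construction $w_n\in W_n$, and comparing the definition of $W_n$ with \eqref{eq:defVndg} shows that $w_n$, regarded as an element of $V=L^2(0,T;U)$, lies in $V_n$ (the difference of $W_n$ and $V_n$ concerns only the pointwise initial value, which is irrelevant in $L^2$). Since $P^V_n$ is the orthogonal projection of $V$ onto $V_n$, the element $P^V_n w$ minimizes $\Vert w - v\Vert_V$ over all $v\in V_n$; as $w_n$ is one such $v$, the inequality follows at once.

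For the upper bound, I would insert $P^U_n\circ w$ and use the triangle inequality
\[
\Vert w - w_n\Vert_V \le \Vert w - P^U_n\circ w\Vert_V + \Vert P^U_n\circ w - w_n\Vert_V,
\]
recognizing the first term as $\delta_n(w)$ by \eqref{eq:definterVex}. It then remains to show that $\Vert P^U_n\circ w - w_n\Vert_V \le \frac{k^{q+1}}{q!}\Vert w^{[q+1]}\Vert_V$. For this I invoke \eqref{eq:taylorbis}, which gives $P^U_n\circ w - w_n = T_n^{(m)} w$ on each slab $((m-1)k,mk]$, with $T_n^{(m)} w(t) = \int_{(m-1)k}^t \frac{(s-(m-1)k)^q}{q!}P^U_n w^{[q+1]}(s)\,{\rm d}s$. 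Using that $P^U_n$ is a contraction in $U$ and that $s-(m-1)k\le k$ on the range of integration, I bound
\[
\Vert T_n^{(m)} w(t)\Vert_U \le \frac{k^q}{q!}\int_{(m-1)k}^{mk}\Vert w^{[q+1]}(s)\Vert_U\,{\rm d}s \le \frac{k^{q+1/2}}{q!}\Big(\int_{(m-1)k}^{mk}\Vert w^{[q+1]}(s)\Vert_U^2\,{\rm d}s\Big)^{1/2},
\]
the last step being the Cauchy--Schwarz inequality. Since the right-hand side does not depend on $t$, integrating its square over $t\in((m-1)k,mk)$ produces a factor $k$ and yields $\int_{(m-1)k}^{mk}\Vert T_n^{(m)}w(t)\Vert_U^2\,{\rm d}t \le \frac{k^{2q+2}}{(q!)^2}\int_{(m-1)k}^{mk}\Vert w^{[q+1]}(s)\Vert_U^2\,{\rm d}s$. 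Summing over $m=1,\ldots,N$ and taking square roots gives exactly $\frac{k^{q+1}}{q!}\Vert w^{[q+1]}\Vert_V$, which completes the second inequality.

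There is no genuine obstacle here, since the argument is a standard interpolation estimate resting on the already-established Taylor formula. The only point requiring care is the bookkeeping of the powers of $k$: a factor $k^q$ comes from the crude bound $(s-(m-1)k)^q\le k^q$, a factor $k^{1/2}$ from passing from the $L^1$- to the $L^2$-norm on a slab via Cauchy--Schwarz, and a further $k^{1/2}$ from integrating the $t$-independent bound over a slab of length $k$; together these give the sharp exponent $q+1$.
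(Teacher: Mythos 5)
Your proof is correct and follows essentially the same route as the paper: the Taylor identity \eqref{eq:taylorbis} to split $w-w_n$ into $w-P^U_n\circ w$ plus the remainder $T_n^{(m)}w$, then Cauchy--Schwarz on each slab and summation over $m$, with the same power count $k^{q}\cdot k^{1/2}\cdot k^{1/2}=k^{q+1}$. The only difference is that you spell out the left-hand inequality (the minimizing property of $P^V_n$ over $V_n$, which $w_n$ belongs to as an element of $V$), which the paper leaves implicit.
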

 \begin{proof}
  Owing to \eqref{eq:taylorbis}, we have, for any $m=1,\ldots,N$ and $t\in ((m-1)k,mk]$,
  \[
   w(t) - w_n(t) = w(t) - P^U_n w(t) + \int_{(m-1)k}^t \frac {(s - (m-1)k)^q} {q!}P^U_n w^{[q+1]}(s){\rm d}s.
  \]
We notice that, owing to the Cauchy-Schwarz inequality, we have
\[
 \Vert \int_{(m-1)k}^t \frac {(s - (m-1)k)^q} {q!}P^U_n w^{[q+1]}(s){\rm d}s\Vert_U^2\le \frac {k^{2q+1}}{(q!)^2}\int_{(m-1)k}^{mk}\Vert w^{[q+1]}(s)\Vert_U^2{\rm d}s.
\]
Another application of  the Cauchy-Schwarz inequality concludes the proof of the lemma.  
 \end{proof}

 \begin{lem}\label{lem:cvschemedg}  
 We define $R^V~:~V'\to V$ as in Lemma  \ref{lem:rieszUV}.
Let $w\in H^{q+2}(0,T;U)$ and let $w_n\in W_n$ be defined by \eqref{eq:definterpdg}.
 Then, for $q=0$, we have
 
 \begin{equation}\label{eq:cvschemedgqz}
  \Vert w' - \widehat{\partial}w_n\Vert_{V'}  \le \delta_n(R^V w') +  k  \Vert w''\Vert_{V'} + C_H^2(\delta_n(w')+ k \Vert w''\Vert_{V}).
 \end{equation}
 
 and, for any $q\ge 1$, there exists  $\ctel{cte:csixq}$ only depending on $q$ such that
 \begin{equation}\label{eq:cvschemedg}
  \Vert w' - \widehat{\partial}w_n\Vert_{V'}  \le\delta_n(R^V w') +   \frac {k^{q+1}}{q!} \Vert w^{[q+2]}\Vert_{V'} +     \cter{cte:csixq} ( \delta_n(w')+k^{q} (\Vert w^{[q+2]}\Vert_{V'} + \Vert w^{[q+1]}\Vert_V)).
 \end{equation}
 \end{lem}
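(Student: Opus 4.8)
The plan is to mirror the proof of Lemma~\ref{lem:cvschemeex}, upgrading its first-order piecewise objects to their degree-$q$ analogues and paying special attention to the jump correction built into the discrete derivative \eqref{eq:defqnevolstrdg}. First I would introduce the auxiliary element $\widehat{w_n'}\in V'$ defined by $\langle\widehat{w_n'},v\rangle_{V',V}=\langle w',P^V_n v\rangle_{\mathbb{H}}$ for all $v\in V$, and split
\[
\Vert w'-\widehat{\partial}w_n\Vert_{V'}\le\Vert w'-\widehat{w_n'}\Vert_{V'}+\Vert\widehat{w_n'}-\widehat{\partial}w_n\Vert_{V'}.
\]

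For the first term the computation of Lemma~\ref{lem:cvschemeex} carries over verbatim: using the Riesz isomorphism (Lemma~\ref{lem:rieszUV}) and the self-adjointness of $P^V_n$ in $V$ one obtains $\Vert w'-\widehat{w_n'}\Vert_{V'}=\Vert R^V w'-P^V_n R^V w'\Vert_V$. Since $w\in H^{q+2}(0,T;U)$, the function $R^V w'$ lies in $H^{q+1}(0,T;U)$ with $(R^V w')^{[q+1]}=R^V w^{[q+2]}$ by Lemma~\ref{lem:rieszUV}, so Lemma~\ref{lem:interpinVdg} applied to $R^V w'$, together with the isometry $\Vert R^V\,\cdot\,\Vert_V=\Vert\,\cdot\,\Vert_{V'}$, produces exactly $\delta_n(R^V w')+\frac{k^{q+1}}{q!}\Vert w^{[q+2]}\Vert_{V'}$, the first two terms on the right-hand side of \eqref{eq:cvschemedg} (resp. \eqref{eq:cvschemedgqz}).

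The work lies in the second term. From \eqref{eq:defdhatfromd} one has $\langle\widehat{w_n'}-\widehat{\partial}w_n,v\rangle_{V',V}=\langle w'-\partial w_n,P^V_n v\rangle_{\mathbb{H}}$, whence $\Vert\widehat{w_n'}-\widehat{\partial}w_n\Vert_{V'}\le C_H\Vert w'-\partial w_n\Vert_{\mathbb{H}}$ by \eqref{eq:contemb}, and it suffices to estimate $\Vert w'-\partial w_n\Vert_{\mathbb{H}}$, treating the two pieces of \eqref{eq:defqnevolstrdg} separately. For $q\ge1$, differentiating the interpolant \eqref{eq:definterpdg} shows that the broken part $\partial_b w_n$ equals $P^U_n$ applied to the degree-$(q-1)$ Taylor polynomial of $w'$ at the left node; hence by the Taylor formula with integral remainder, the $U$-contractivity of $P^U_n$, and Cauchy--Schwarz one gets $\Vert w'-P^U_n w'\Vert_{\mathbb{H}}\le C_H\delta_n(w')$ and $\Vert P^U_n w'-\partial_b w_n\Vert_{\mathbb{H}}\le C_H\,C(q)\,k^{q}\Vert w^{[q+1]}\Vert_V$. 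For the jump correction, \eqref{eq:taylorbis} identifies the jump $w_{n,0}^{(m)}-w_n((m-1)k)$ of the interpolant at the node $(m-1)k$ with the remainder $\int_{(m-2)k}^{(m-1)k}\frac{(s-(m-2)k)^q}{q!}P^U_n w^{[q+1]}(s)\,{\rm d}s$, which is $O(k^{q+1/2})$ in $U$ by Cauchy--Schwarz; the weight $\frac1k\psi_0$ in \eqref{eq:defqnevolstrdg} then contributes $\frac{\Vert\psi_0\Vert_{L^2(0,1)}^2}{k}\Vert w_{n,0}^{(m)}-w_n((m-1)k)\Vert_H^2$ per slab, so that summation yields an $\mathbb{H}$-contribution of order $C_H\,C'(q)\,k^{q}\Vert w^{[q+1]}\Vert_V$. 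Collecting these bounds gives the $\cter{cte:csixq}$-term of \eqref{eq:cvschemedg}, with $\cter{cte:csixq}$ depending only on $q$ (and on $C_H$ through the embedding). The case $q=0$ uses the same splitting: there $\partial_b w_n=0$ and $\partial w_n$ reduces to the backward difference $\frac1k\bigl(P^U_n w((m-1)k)-P^U_n w((m-2)k)\bigr)$; writing this difference as $\frac1k\int_{(m-2)k}^{(m-1)k}w'$ and comparing with $w'(t)$ through $w''$ produces the term $C_H^2\bigl(\delta_n(w')+k\Vert w''\Vert_V\bigr)$ of \eqref{eq:cvschemedgqz}.

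The main obstacle is precisely the jump-correction term: unlike in the conforming Euler estimate, where the discrete derivative coincides with $P^V_n w'$, here $\partial w_n\neq P^V_n w'$ and the discontinuity jumps must be controlled on their own. The delicate point is the balance between the destabilising factor $k^{-1}$ carried by the weight $\frac1k\psi_0$, whose normalisation comes from the Gram-matrix computation of Appendix~\ref{sec:gram}, and the size $O(k^{q+1/2})$ of the jumps in $U$; these combine to give the correct order $k^{q}$, but the argument is sensitive both to these exponents and to the systematic use of the $U$-contractivity of $P^U_n$ before passing to the $H$-norm.
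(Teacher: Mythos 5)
Your proposal is correct and follows essentially the same route as the paper's proof: the same splitting through the auxiliary element $\widehat{w_n'}$, the same use of the Riesz isomorphism and Lemma~\ref{lem:interpinVdg} for the first term, and the same decomposition of $w'-\partial w_n$ into the broken-derivative error and the jump correction, with the jump identified via the Taylor remainder \eqref{eq:taylorbis} on the preceding slab and the $k^{-1}\psi_0$ weight balanced against the $O(k^{q+1/2})$ size of the jump. The only (harmless) deviations are presentational, e.g.\ noting explicitly that the constant also carries factors of $C_H$, which matches what the paper's own computation produces.
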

 \begin{proof}
 Let $\widehat{w_n'}\in V'$ be defined, for  a.e. $t\in(0,T)$, by
 \begin{equation}\label{eq:defrhhdg}
 \langle \widehat{w_n'}, v\rangle_{V',V} = \langle w', P^V_n  v\rangle_{V',V}\mbox{ for all }v\in V .
 \end{equation}
 We notice that
 \[
  \Vert w' - \widehat{\partial}w_n\Vert_{V'} \le \Vert w' - \widehat{w_n'}\Vert_{V'} + \Vert \widehat{w_n'} - \widehat{\partial}w_n\Vert_{V'}.
 \]
 We have, for any $v\in V$,

 \begin{multline*}
 \langle w' - \widehat{w_n'}, v\rangle_{V',V} = \langle w' , v\rangle_{V',V} -\langle w' , P^V_n  v\rangle_{V',V} \\ 
 =  \langle R^V w' , v\rangle_{V} -\langle R^V w' , P^V_n  v\rangle_{V} = \langle R^V w' - P^V_n  R^V w' ,v\rangle_{V}.
\end{multline*}
 This proves that
 \[
  \Vert  w' - \widehat{w_n'}\Vert_{V'} = \Vert R^V w' - P^V_n  R^V w'\Vert_{V}.
 \]
 We then apply Lemma \ref{lem:interpinVdg}, and we get
 \[
  \Vert  w' - \widehat{w_n'}\Vert_{V'} \le  \delta_n(R^V w') +   \frac {k^{q+1}}{q!} \Vert R^V w^{[q+2]}\Vert_{V} = \delta_n(R^V w') +   \frac {k^{q+1}}{q!} \Vert w^{[q+2]}\Vert_{V'}.
 \]
 This yields the first two terms of the right-hand-side of \eqref{eq:cvschemedg}.
 We also have  that, for any $v\in V$,
  \begin{multline*}
  \langle w' - \partial w_n,P^V_n  v\rangle_{\mathbb{H}}\le \Vert w' - \partial w_n\Vert_{\mathbb{H}}\Vert P^V_n  v \Vert_{\mathbb{H}}
  \\ \le C_H \Vert w' - \partial w_n\Vert_{\mathbb{H}}\Vert P^V_n  v \Vert_{V} \le  C_H \Vert w' - \partial w_n\Vert_{\mathbb{H}}\Vert v\Vert_{V}.
\end{multline*}
We then get 
  \begin{equation*}
  \Vert  \widehat{w_n'} - \widehat{\partial}w_n\Vert_{V'} =\sup_{v\in V,\Vert v\Vert_V = 1}  \langle w' - \widehat{\partial} w_n,P^V_n  v\rangle_{V',V}
  \le  C_H \Vert w' - \partial w_n\Vert_{\mathbb{H}}.
\end{equation*}
 We then apply
 \begin{multline*}
  \Vert w' - \partial w_n\Vert_{\mathbb{H}} \le \Vert w' - P^U_n w'\Vert_{\mathbb{H}}+ \Vert P^U_n w' - \partial w_n\Vert_{\mathbb{H}}
  \\ \le C_H(\Vert w' - P^U_n w'\Vert_{V}+ \Vert P^U_n w' - \partial w_n\Vert_{V}).
 \end{multline*}

 We can write, for a.e. $t\in((m-1)k,mk)$ and all $m=1,\ldots,N$, that
 \begin{multline*}
 P^U_n w'(t) = \sum_{i=1}^{q} \frac {(t - (m-1)k)^{i-1}} {(i-1)!} P^U_n w^{[i]}((m-1)k) + 
 \int_{(m-1)k}^t \frac {(s - (m-1)k)^{q-1}} {(q-1)!} P^U_n w^{[q+1]}(s){\rm d}s \\
 = \partial_b w_n(t) +  \int_{(m-1)k}^t\frac {(s - (m-1)k)^{q-1}} {(q-1)!} P^U_n w^{[q+1]}(s){\rm d}s.
 \end{multline*}
 
 Let us first consider the case $q>0$. We write
 \[
   \Vert P^U_n w' - \partial w_n\Vert_{V}   \le \Vert P^U_n w' - \partial_b w_n\Vert_{V}+ \Vert \partial w_n - \partial_b w_n\Vert_{V}.
 \]
 This yields
 \[
  \Vert P^U_n w' - \partial_b w_n\Vert_{V} \le \frac {k^q}{(q-1)!} \Vert w^{[q+1]}\Vert_V.
 \]
On the other hand, we have, using
\[
 w_n((m-1)k) =  \sum_{i=0}^q \frac {k^i} {i!} P^U_n w^{[i]}((m-2)k)
\]
and  $v_0^{(m)} = P^U_n w((m-1)k)$
 \begin{multline*}
 \Vert \partial w_n - \partial_b w_n\Vert_{V}^2 \le C_\psi \sum_{m=1}^N \frac 1 k \Vert v_0^{(m)} - w_n((m-1)k)\Vert_U^2 \\
 = C_\psi \sum_{m=1}^N  \frac 1 k \Vert \int_{(m-2)k}^{(m-1)k} \frac {(s - (m-2)k)^q} {q!}w^{[q+1]}(s){\rm d}s\Vert_U^2 
 \le C_\psi \frac {k^{2q}} {(q!)^2}  \Vert w^{[q+1]}\Vert_V^2.
 \end{multline*}
 Gathering these results provides \eqref{eq:cvschemedg}.
 
 \medskip
 
 Let us now consider the case $q=0$. The reasoning of Lemma \ref{lem:cvschemeex} applies to this case, and we again get 
 \[
  \int_{mk}^{(m+1)k}\Vert P^U_n w'(t) - \partial w_n(t)\Vert_U^2{\rm d}t\le k^2\int_{mk}^{(m+1)k}\Vert w''(\tau)\Vert_U{\rm d}\tau.
 \]
  \end{proof}

 \begin{lem}\label{lem:interpinVpdg}  Let $w\in H^{q+2}(0,T;U)$ and let $w_n\in W_n$ be defined by \eqref{eq:definterpdg}.
 Then 
 \[
 \sup_{t\in[0,T]}\Vert w_n(t) - w(t)\Vert_H \le \ctel{cte:cseptq} (k^{q+1}(\Vert w^{[q+1]}\Vert_V+\Vert w^{[q+2]}\Vert_{V})+ \delta_n(w) + \delta_n(w')).
 \]
 \end{lem}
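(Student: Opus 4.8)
The plan is to follow exactly the structure of the proof of Lemma~\ref{lem:interpinVpex}, replacing the piecewise interpolation used there by the degree-$q$ Taylor interpolation of \eqref{eq:definterpdg} and exploiting the Taylor identity \eqref{eq:taylorbis}. For every $t\in[0,T]$ I would start from the triangle inequality together with \eqref{eq:contemb},
\[
\Vert w_n(t) - w(t)\Vert_H \le C_H \Vert w_n(t) - P^U_n w(t)\Vert_U + \Vert P^U_n w(t) - w(t)\Vert_H,
\]
and estimate the two terms separately. The first term measures the local Taylor interpolation error in the strong $U$-norm, while the second is the projection error measured in $H$, which I would control through the embedding $W\hookrightarrow C([0,T];H)$ of \eqref{eq:embWinH}.

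For the first term I would use \eqref{eq:taylorbis}: on each interval $((m-1)k,mk]$ one has $P^U_n w(t) - w_n(t) = T_n^{(m)} w(t) = \int_{(m-1)k}^t \frac{(s-(m-1)k)^q}{q!} P^U_n w^{[q+1]}(s)\,{\rm d}s$, and this remainder vanishes at $t=(m-1)k$, so the bound also covers the nodal values and the convention $w_n(0)=P^U_n w(0)$. Bounding the integrand in $U$ (using that $P^U_n$ is a contraction in $U$ and that $(s-(m-1)k)^q\le k^q$) gives
\[
\Vert P^U_n w(t) - w_n(t)\Vert_U \le \frac{k^q}{q!}\int_{(m-1)k}^{mk}\Vert w^{[q+1]}(s)\Vert_U\,{\rm d}s \le \frac{k^{q+1}}{q!}\sup_{s\in[0,T]}\Vert w^{[q+1]}(s)\Vert_U.
\]
The \emph{crucial point} is that I must keep the supremum here rather than apply Cauchy--Schwarz to the time integral (which would only yield a factor $k^{q+1/2}$); the supremum is then converted into the desired $V$-norms by the Poincaré-type inequality \eqref{eq:poincmoy} applied to $w^{[q+1]}$ (valid precisely because $w\in H^{q+2}(0,T;U)$), namely $\sup_t\Vert w^{[q+1]}(t)\Vert_U \le \frac{1}{\sqrt{T}}\Vert w^{[q+1]}\Vert_V + \sqrt{T}\Vert w^{[q+2]}\Vert_V$. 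Multiplying by $C_H$ produces the term $k^{q+1}(\Vert w^{[q+1]}\Vert_V + \Vert w^{[q+2]}\Vert_V)$.

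For the second term I would apply \eqref{eq:embWinH} to the function $P^U_n\circ w - w\in W$ (here $(P^U_n\circ w)' = P^U_n\circ w'$ since $P^U_n$ is time-independent), giving $\sup_t\Vert P^U_n w(t)-w(t)\Vert_H \le C_T(\Vert P^U_n\circ w - w\Vert_V + \Vert P^U_n\circ w' - w'\Vert_{V'})$. The first summand is $\delta_n(w)$ by \eqref{eq:definterVex}, and the second is reduced to $\delta_n(w')$ exactly as in Lemma~\ref{lem:interpinVpex}, through the double embedding $\Vert r\Vert_{V'}\le C_H\Vert r\Vert_{\mathbb{H}}\le C_H^2\Vert r\Vert_V$ applied to $r = P^U_n\circ w' - w'$. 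Combining the two terms and absorbing $C_H$, $C_T$, $1/q!$, $\sqrt T$ and $1/\sqrt T$ into a single constant $\cter{cte:cseptq}$ depending only on $q$, $T$ and $C_H$ gives the claim. The only genuine subtlety is the one already flagged: obtaining the sharp power $k^{q+1}$ forces the sup-estimate and hence the extra regularity $w\in H^{q+2}$, one order beyond what Lemma~\ref{lem:interpinVdg} required for the $V$-estimate.
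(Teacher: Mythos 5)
Your proposal is correct and follows essentially the same route as the paper's proof: the same triangle-inequality splitting with \eqref{eq:contemb}, the Taylor remainder from \eqref{eq:taylorbis} bounded through the sup-norm estimate \eqref{eq:poincmoy} applied to $w^{[q+1]}$ (which is exactly how the paper obtains the power $k^{q+1}$), and the projection error handled via \eqref{eq:embWinH} as in Lemma~\ref{lem:interpinVpex}. The only differences are immaterial constants ($1/q!$ versus $1/(q+1)!$ from bounding $(s-(m-1)k)^q$ by $k^q$ instead of integrating it exactly), all absorbed into $\cter{cte:cseptq}$.
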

 \begin{proof}
 For $m=1,\ldots,N$ and all $t\in((m-1)k,mk]$, we have
 \begin{multline*}
  \Vert w_n(t) - w(t)\Vert_H  \le \Vert w_n(t) - P^U_n w(t)\Vert_H + \Vert P^U_n w(t) - w(t)\Vert_H \\
  \le C_H \Vert w_n(t) - P^U_n w(t)\Vert_U + C_T(\delta_n(w) + C_H\delta_n(w')).
 \end{multline*}
 
 Using \eqref{eq:taylorbis}, we have 
\[
 \Vert w_n(t) - P^U_n w(t)\Vert_U \le \int_{(m-1)k}^t \frac {(s - (m-1)k)^q} {q!}\Vert w^{[q+1]}(s)\Vert_U{\rm d}s.
\]
 Applying \eqref{eq:poincmoy} to $w^{[q+1]}(s)$ instead of $w'$, we obtain
\[
 \Vert w_n(t) - P^U_n w(t)\Vert_U \le \frac {k^{q+1}} {(q+1)!} (\frac 1 {\sqrt{T}} \Vert w^{[q+1]}\Vert_V+ \sqrt{T}\Vert w^{[q+2]}\Vert_V).
\]
 
This concludes the proof of the lemma.  
 \end{proof}

 Putting together the results of this section, we now show that we can approximate each function of $W$ by elements of $W_n$. For this we now consider a sequence of finite-dimensional subspaces $(U_n)_{n\in\mathbb{N}}$ of $U$ such that
 \[
  \displaystyle\lim_{n\to\infty} U_n = U.
 \]
This is equivalent to saying
 \[
  \displaystyle\lim_{n\to\infty} \Vert  u -  P^U_n u\Vert_U =0\hbox{ for all }u\in W,
 \]
 where $P^U_n$ denotes the orthogonal projection of $U$ to $U_n$.

 Let $N_n\in \mathbb{N}$ such that $\displaystyle\lim_{n\to \infty} N_n = \infty$. For each $n\in\mathbb{N}$, let $k_n = \frac {T}{N_n}$ be the time step and let
 \[
  W_n=\{w:[0,T]\to U_n: w_{|((m-1)k,mk]}\in\mathcal{P}^q(\mathbb{R};U_n)\hbox{ for }m=0,\ldots,N_n - 1\}.
 \]
 Recall that $W = H^1(0,T;U')\cap V $. We have the following approximation results.
 \begin{thm}\label{thm:3.9dg}
  Let $w\in W$. Then there exists $w_n\in W_n$ such that
  \[
   \Vert w' - \widehat{\partial}w_n\Vert_{V'}^2 +  \Vert w - w_n\Vert_{V}^2 + \sup_{t\in[0,T]} \Vert w(t) - w_n(t)\Vert_{H}^2 \to 0 \hbox{ as }n\to\infty. 
  \]
This proves that $X\subset \displaystyle\lim_{n\to\infty} X_n$, letting $X$ be defined by \eqref{eq:defhatw} and $X_n$ be defined by \eqref{eq:defXfromWdg}.
 \end{thm}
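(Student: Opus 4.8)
The plan is to follow, step by step, the template of the proof of Theorem~\ref{thm:3.9}: reduce to a sufficiently smooth-in-time approximant by density, apply to it the discontinuous-Galerkin interpolation estimates of Subsection~\ref{sub:4.3}, and then upgrade from smooth functions to a general $w\in W$ using that $\displaystyle\lim_{n\to\infty} X_n$ is closed (Lemma~\ref{lem:limFn}). Stating the inclusion $X\subset\displaystyle\lim_{n\to\infty} X_n$ is enough: by Definition~\ref{def:limseqspace} it means exactly that for every $w\in W$ there are $w_n\in W_n$ with $(\widehat{\partial}w_n,w_n)\to(w',w)$ in $Z$, and convergence in $Z$ is precisely the vanishing of $\Vert w'-\widehat{\partial}w_n\Vert_{V'}^2+\Vert w-w_n\Vert_V^2+\sup_{t}\Vert w(t)-w_n(t)\Vert_H^2$.

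First I would record the two facts that drive all the estimates. From the hypothesis $\displaystyle\lim_{n\to\infty}\Vert u-P^U_n u\Vert_U=0$ for all $u\in U$, Lebesgue's dominated convergence theorem gives $\displaystyle\lim_{n\to\infty}\delta_n(v)=0$ for every fixed $v\in V$ (used below with $v=w'$ and $v=R^V w'$), and moreover $k_n=T/N_n\to0$ since $N_n\to\infty$. Consequently, for any \emph{fixed} $v\in H^{q+2}(0,T;U)$, taking the interpolant $v_n\in W_n$ defined by \eqref{eq:definterpdg}, each right-hand side in Lemmas~\ref{lem:interpinVdg}, \ref{lem:cvschemedg} and~\ref{lem:interpinVpdg} tends to $0$: every term there is either a \emph{fixed} norm of $v$ multiplied by a positive power of $k_n$, or a quantity of the form $\delta_n(\cdot)$ (this covers both cases $q=0$ and $q\ge1$ in Lemma~\ref{lem:cvschemedg}). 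Hence
\[
\Vert v'-\widehat{\partial}v_n\Vert_{V'}^2+\Vert v-v_n\Vert_V^2+\sup_{t\in[0,T]}\Vert v(t)-v_n(t)\Vert_H^2\to0,
\]
which says exactly that $(v',v)\in\displaystyle\lim_{n\to\infty} X_n$.

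It then remains to pass from such smooth $v$ to an arbitrary $w\in W$. The map $v\mapsto(v',v)$ is continuous from $W$ into $Z=V'\times G$, because by \eqref{eq:defnorG} and \eqref{eq:embWinH} one has $\Vert(v',v)\Vert_Z^2=\Vert v'\Vert_{V'}^2+\sup_{t}\Vert v(t)\Vert_H^2+\Vert v\Vert_V^2\le(1+C_T^2)\Vert v\Vert_W^2$. Since $\displaystyle\lim_{n\to\infty} X_n$ is closed by Lemma~\ref{lem:limFn}, it suffices to approximate $w$ in the norm of $W$ by functions of $H^{q+2}(0,T;U)$: for each $\varepsilon>0$ pick $v\in H^{q+2}(0,T;U)$ with $\Vert w-v\Vert_W\le\varepsilon$, deduce $(v',v)\in\displaystyle\lim_{n\to\infty} X_n$ from the previous paragraph, and let $\varepsilon\to0$; closedness yields $(w',w)\in\displaystyle\lim_{n\to\infty} X_n$, hence $X\subset\displaystyle\lim_{n\to\infty} X_n$.

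The step I expect to be the real obstacle is precisely this density upgrade. The result cited for the Euler case, \cite[III. Proposition 1.2, p.106]{Sho97}, only furnishes approximants in $H^2(0,T;U)$, whereas the degree-$q$ scheme requires the stronger regularity $H^{q+2}(0,T;U)$. I would obtain it by the same regularization idea: after extending $w$ to a neighbourhood of $[0,T]$ in a way that preserves the $W$-structure, convolution in time with a smooth mollifier produces functions in $C^\infty([0,T];U)\subset H^{q+2}(0,T;U)$ that converge to $w$ in $V$ and whose derivatives converge to $w'$ in $V'$, i.e. convergence in $W$. The delicate point is carrying this out up to the endpoints $0$ and $T$ without spoiling the $V'$-convergence of the derivative, which is exactly where the $W$-preserving extension past $[0,T]$ has to be arranged; everything else reduces to bookkeeping of the interpolation constants, none of which depends on $N_n$ in a way that affects the limit.
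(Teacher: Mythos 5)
Your proof follows essentially the same route as the paper's: approximate $w$ by a smooth $v\in H^{q+2}(0,T;U)$ with $\Vert w'-v'\Vert_{V'}+\Vert w-v\Vert_V\le\varepsilon$, apply Lemmas~\ref{lem:interpinVdg}, \ref{lem:cvschemedg} and~\ref{lem:interpinVpdg} to the interpolant \eqref{eq:definterpdg} of $v$, and conclude by an $\varepsilon$-argument (which you phrase, equivalently, as closedness of $\displaystyle\lim_{n\to\infty}X_n$ via Lemma~\ref{lem:limFn}). You are in fact more explicit than the paper about the density of $H^{q+2}(0,T;U)$ in $W$, which the paper asserts without justification; your mollification sketch is the appropriate way to supply it.
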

 \begin{proof}
 Let $w\in W$ and let $\varepsilon>0$. There exists $v\in H^{q+2}(0,T;U)$ such that
  \[
    \Vert w' - v'\Vert_{V'} +  \Vert w - v\Vert_{V}\le \varepsilon.
  \]
  By \eqref{eq:embWinH} this implies that $\sup_{t\in[0,T]}\Vert w(t) - v(t)\Vert_{H}\le C_H \varepsilon$.
  Let $v_n\in W_n$ be given from $v$ via \eqref{eq:definterpdg}. Then, by Lemma \ref{lem:interpinVdg},  $\Vert v_n - v\Vert_V\to 0$ as $n\to\infty$. By Lemma \ref{lem:cvschemedg}, $\displaystyle\lim_{n\to\infty}\Vert v' - \widehat{\partial}v_n\Vert_{V'} = 0$ and, by Lemma \ref{lem:interpinVpdg},  $\displaystyle\lim_{n\to\infty}\sup_{t\in[0,T]}\Vert v(t) - v_n(t)\Vert_{H} = 0$.

  Since $\varepsilon>0$ is arbitrary, the proof is complete.
 \end{proof}

 \subsection{Conclusion: optimal convergence}
 
 Let $(U_n)_{n\in\mathbb{N}}$ be a sequence of finite dimensional subspaces of $U$ such that
 \[
 U = \displaystyle\lim_{n\to\infty}U_n.
\]
Let $(N_n)_{n\in\mathbb{N}}$ be a sequence of integers which tends to infinity.
 Recall that $W_n$ is the space of piecewise polynomial functions defined in Subsection~\ref{sub:51}. 
 
 \begin{thm}\label{thm:cvdg} For given $f\in V'$ and $\xi_0\in H$, let $u$ be the unique solution of Problem \eqref{eq:5.1}.
 
Then we have
\begin{equation}\label{eq:convXYdg}
 X = \displaystyle\lim_{n\to\infty}X_n\hbox{ and }Y = \displaystyle\lim_{n\to\infty}Y_n,
\end{equation}
where $X$, $Y$ and $Z$ are defined in Section  \ref{sec:extended} and $X_n$ and $Y_n$ are defined by \eqref{eq:defXfromWdg} and \eqref{eq:defYdg} from $W_n$ and $V_n$ defined in Section \ref{sub:51}. 
 Moreover, letting  $w_n\in W_n$ be the unique solution of the time discontinuous Galerkin Scheme \eqref{eq:schemeinidg}-\eqref{eq:schememdg} for each $n\in \mathbb{N}$, we have
 \[
  \displaystyle\lim_{n\to\infty} \Big(\Vert u' - \widehat{\partial}w_n\Vert_{V'} +  \Vert u - w_n\Vert_{V} + \sup_{t\in[0,T]} \Vert u(t) - w_n(t)\Vert_{H}\Big)= 0.
 \]
 \end{thm}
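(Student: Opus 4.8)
The plan is to obtain Theorem~\ref{thm:cvdg} as a direct application of the abstract nonconforming result Theorem~\ref{thm:bnbnonconf}, following verbatim the scheme used for the Euler case in Theorem~\ref{thm:cv}. The decisive point is that all the genuinely hard analytic work is already in place: the uniform BNB estimate for the discontinuous Galerkin spaces, valid \emph{without any CFL restriction}, is provided by Lemma~\ref{lem:estimbnbndg}, while the closedness of $X$ and the dual uniqueness condition are provided by Lemma~\ref{lem:xhasdenserange}. What remains is to check that the hypotheses of Theorem~\ref{thm:bnbnonconf} are met with the present spaces $Z = V'\times G$, $Y = V\times H$, $X$ from \eqref{eq:defhatw}, and $X_n$, $Y_n$ from \eqref{eq:defXfromWdg}--\eqref{eq:defYdg}, and then to read off the conclusions.

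First I would verify the hypotheses. The space $Y = V\times H$ is a product of Hilbert spaces, hence reflexive; the form $b$ is continuous with constant $2M$ by the computation in Section~\ref{sec:extended}; and $\dim X_n = \dim Y_n$ for every $n$, since both are isomorphic to $W_n$. The uniform condition \eqref{eq:betabnbn} holds by Lemma~\ref{lem:estimbnbndg}, with $\beta>0$ independent of $n$ and requiring no condition on the time step. Finally, Lemma~\ref{lem:xhasdenserange} shows that $X$ is a closed subspace of $Z$ satisfying \eqref{eq:denserange}, while Theorem~\ref{thm:3.9dg} gives $X \subset \displaystyle\lim_{n\to\infty} X_n$. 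Thus $X$ is a closed subspace of $X_\infty := \displaystyle\lim_{n\to\infty}X_n$ verifying \eqref{eq:denserange}, exactly as required.

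Theorem~\ref{thm:bnbnonconf} then yields at once that $X = X_\infty = \displaystyle\lim_{n\to\infty}X_n$, the first identity in \eqref{eq:convXYdg}. Applying the same theorem to the functional $L$ of \eqref{eq:defL} furnishes a unique $\widehat{x}\in X$ solving \eqref{eq:pbcontinZ}; by the equivalence of \eqref{eq:pbcontinZ} and \eqref{eq:5.1}, $\widehat{x} = (u',u)$ with $u$ the unique solution of \eqref{eq:5.1}. Moreover, Lemma~\ref{lem:schemeexdg} identifies the discrete solution $w_n$ of \eqref{eq:schemeinidg}--\eqref{eq:schememdg} with the solution $x_n = (\widehat{\partial}w_n,w_n)\in X_n$ of \eqref{eq:schemeinZYdg}, and Theorem~\ref{thm:bnbnonconf} guarantees $x_n \to \widehat{x}$ in $Z$. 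Unpacking the norm $\Vert(z_1,z_2)\Vert_Z^2 = \Vert z_1\Vert_{V'}^2 + \Vert z_2\Vert_V^2 + \sup_{t\in[0,T]}\Vert z_2(t)\Vert_H^2$ of $Z$, this convergence is precisely the asserted limit.

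It remains to upgrade $Y = \mathop{\rm w\mbox{-}lim}\limits_{n\to\infty} Y_n$, which Theorem~\ref{thm:bnbnonconf} delivers, to the strong identity $Y = \displaystyle\lim_{n\to\infty}Y_n$ of \eqref{eq:convXYdg}. The inclusion $\displaystyle\lim_{n\to\infty}Y_n\subset Y$ is automatic. For the reverse inclusion I would argue as in Remark~\ref{rem4.14}: given $(v,\eta)\in V\times H$, the $H$-component is approximated because $U = \displaystyle\lim_{n\to\infty}U_n$, together with the dense embedding $U\hookrightarrow H$ and \eqref{eq:contemb}, implies $\dist_H(\eta,U_n)\to 0$, while the $V$-component is handled by Lemma~\ref{lem:interpinVdg}: density of $H^{q+1}(0,T;U)$ in $V$ combined with the interpolation estimate there gives $\dist_V(v,V_n)\to 0$. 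Hence every element of $Y$ is a strong limit of elements of $Y_n$, so $Y = \displaystyle\lim_{n\to\infty}Y_n$. I expect no serious obstacle here: the only step going beyond a mechanical citation of Theorem~\ref{thm:bnbnonconf} is this last strong-limit refinement, which is routine once Lemma~\ref{lem:interpinVdg} is available. The substantive difficulty of the section---the CFL-free uniform inf-sup bound---has already been absorbed into Lemma~\ref{lem:estimbnbndg}.
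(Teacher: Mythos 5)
Your proposal is correct and follows essentially the same route as the paper: verify the hypotheses of Theorem~\ref{thm:bnbnonconf} using Lemma~\ref{lem:estimbnbndg} (CFL-free BNB), Lemma~\ref{lem:xhasdenserange} (closedness of $X$ and the dual uniqueness condition) and Theorem~\ref{thm:3.9dg} ($X\subset\lim_{n\to\infty}X_n$), then read off existence, uniqueness and convergence via Lemma~\ref{lem:schemeexdg}. If anything, you are more explicit than the paper on the one point it glosses over, namely upgrading $Y=\mathop{\rm w\mbox{-}lim}_{n\to\infty}Y_n$ to the strong identity $Y=\lim_{n\to\infty}Y_n$ via Lemma~\ref{lem:interpinVdg} and the density of $U_n$ in $H$.
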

 \begin{proof}
 Relations \eqref{eq:convXYdg} are immediate consequences of Theorem \ref{thm:errestdg}. Since, in addition, Theorem \ref{thm:5.3} implies that Condition \eqref{eq:denserange} is fulfilled, we can apply Theorem \ref{thm:bnbnonconf}, which concludes the proof of the theorem.
 \end{proof}

\appendix
\section{Technical lemmas in Hilbert spaces}\label{sec:hilbert}
We present some technical lemmas involving operators on Hilbert spaces.

\begin{lem}\label{lem:zigoto}
Let $V$ be a Hilbert space and let  $A~:~V\to V$ be a $M$-continuous and $\alpha$-coercive operator for $M\ge 1$ and $\alpha>0$, which means that 
\begin{equation}\label{eq:3.7m}
\Vert A v\Vert_{V} \le M \|v\|_V\mbox{ for all }v\in V,
\end{equation}  
and
\begin{equation}\label{eq:3.7n}
 \langle A v, v\rangle_{V} \ge \alpha \|v\|_V^2\mbox{ for all }v\in V.
\end{equation}  
Then
 \begin{equation}\label{eq:3.8n}
 \| w + A v\|_{V}^2\geq 2\alpha\langle w,v\rangle_{V}+ \frac 1 3 (\frac \alpha M)^3 (\|w \|_{V}^2 +\|v\|_V^2) \mbox{ for all }v,w\in V .
 \end{equation} 
\end{lem}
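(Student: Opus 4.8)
The plan is to fix $v$ and read \eqref{eq:3.8n} as a statement about the quadratic function of the free variable $w$,
\[
g(w) := \| w+Av\|_V^2 - 2\alpha\langle w,v\rangle_V - \tfrac13\Big(\tfrac{\alpha}{M}\Big)^3\|w\|_V^2 ,
\]
for which it suffices to prove $g(w)\ge \tfrac13(\alpha/M)^3\|v\|_V^2$ for every $w\in V$. Expanding the square and writing $\mu := 1-\tfrac13(\alpha/M)^3$ and $r := Av-\alpha v$, one rewrites this as
\[
g(w) = \mu\|w\|_V^2 + 2\langle w,r\rangle_V + \|Av\|_V^2 .
\]
First I would note that coercivity and continuity together force $\alpha\le M$ (since $\alpha\|v\|_V^2\le\langle Av,v\rangle_V\le M\|v\|_V^2$), so $\mu\in(0,1]$ and $g$ is a strictly convex, coercive functional of $w$.

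Because $\mu>0$, completing the square gives the exact minimum over the Hilbert space,
\[
\min_{w\in V} g(w) = \|Av\|_V^2 - \frac{\|r\|_V^2}{\mu},
\]
attained at $w=-r/\mu$. The coercivity inequality \eqref{eq:3.7n} then controls the remainder $r$:
\[
\|r\|_V^2 = \|Av\|_V^2 - 2\alpha\langle Av,v\rangle_V + \alpha^2\|v\|_V^2 \le \|Av\|_V^2 - \alpha^2\|v\|_V^2 .
\]
Inserting this (a legitimate lower bound, since $\|r\|_V^2$ enters with a negative sign) produces
\[
\min_{w\in V} g(w) \ge \Big(1-\tfrac1\mu\Big)\|Av\|_V^2 + \frac{\alpha^2}{\mu}\|v\|_V^2 .
\]

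The delicate point, which I expect to be the main obstacle, is that the coefficient $1-\tfrac1\mu = -\tfrac{1}{3\mu}(\alpha/M)^3$ is \emph{negative}; hence to keep bounding from below I must apply the continuity bound \eqref{eq:3.7m} in the form $\|Av\|_V^2\le M^2\|v\|_V^2$ against this negative coefficient. Doing so and simplifying via $M^2(\alpha/M)^3=\alpha^3/M$ collapses the estimate to
\[
\min_{w\in V} g(w) \ge \frac{\alpha^2\bigl(1-\tfrac{\alpha}{3M}\bigr)}{\mu}\,\|v\|_V^2 .
\]
It then remains to verify the purely scalar inequality $\alpha^2\bigl(1-\tfrac{\alpha}{3M}\bigr)\ge \tfrac13(\alpha/M)^3\,\mu$. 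Writing $\gamma=\alpha/M\in(0,1]$ and dividing by $\gamma^2>0$, this reduces to $M^2\bigl(1-\tfrac{\gamma}{3}\bigr)\ge \tfrac{\gamma}{3}\bigl(1-\tfrac{\gamma^3}{3}\bigr)$, which holds with room to spare: since $M\ge 1$ and $0<\gamma\le 1$, the left-hand side is at least $\tfrac23$ while the right-hand side is at most $\tfrac13$. This yields $g(w)\ge\tfrac13(\alpha/M)^3\|v\|_V^2$ for all $w\in V$, which is precisely \eqref{eq:3.8n}.
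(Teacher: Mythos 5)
Your proof is correct. Every step checks out: the identity $g(w)=\mu\|w\|_V^2+2\langle w,r\rangle_V+\|Av\|_V^2$ with $\mu=1-\tfrac13(\alpha/M)^3$ and $r=Av-\alpha v$ is right; $\alpha\le M$ does give $\mu\ge\tfrac23>0$; completing the square yields the exact minimum $\|Av\|_V^2-\|r\|_V^2/\mu$; the coercivity bound $\|r\|_V^2\le\|Av\|_V^2-\alpha^2\|v\|_V^2$ and the subsequent use of $\|Av\|_V^2\le M^2\|v\|_V^2$ against the negative coefficient $1-\tfrac1\mu$ are legitimate; and the final scalar inequality $M^2(1-\gamma/3)\ge\tfrac{\gamma}{3}(1-\gamma^3/3)$ indeed holds with the margin $\tfrac23$ versus $\tfrac13$. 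Your route is genuinely different from the paper's. The paper splits $A$ into its symmetric and antisymmetric parts $A_{\pm}=\tfrac12(A\pm A^*)$, introduces the positive square root $S$ of $A_+$, lower-bounds $\|w+Av\|_V^2$ by $\alpha\|S^{-1}(w+Av)\|_V^2$, exploits $\langle A_- v,v\rangle_V=0$ to kill a cross term, and finishes with a Peter--Paul inequality whose parameter is tuned to $s=\alpha^2/(2M^2)$, arriving at the constant $\frac{\alpha^2}{\alpha^2+2M^2}\frac{\alpha}{M}\ge\tfrac13(\alpha/M)^3$. Your argument dispenses entirely with the adjoint, the operator square root, and the parameter tuning: you treat the claim as a quadratic inequality in the free variable $w$ and minimize it exactly, reducing everything to a one-line scalar estimate. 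This is shorter and more elementary; what the paper's decomposition buys is structural insight into the roles of the symmetric and antisymmetric parts of $A$ (and it is the source of the slightly sharper intermediate constant), but for the stated lemma your approach is entirely adequate.
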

\begin{proof}
 Consider $A_{\hbox{\tiny +}}:=\frac{A+A^*}{2}$ and $A_{\hbox{\tiny -}}:=\frac{A-A^*}{2}$. Then, for any $v\in V$,
 \[\langle A_{\hbox{\tiny +}} v,v\rangle_V=\frac{1}{2}\langle Av,v\rangle_V + \frac{1}{2}\langle A^*v,v\rangle_V=\langle Av,v\rangle_V ,\,\, v\in V,\] 
 and thus $\langle A_{\hbox{\tiny +}} v,v\rangle\geq \alpha \|v\|_V^2, \,\, v\in V$. It follows that the selfadjoint operator $A_{\hbox{\tiny +}}$ is positive and invertible. Let $S$ be the (invertible and positive) square root of $A_{\hbox{\tiny +}}$. 
 Since $\| Sv\|_V^2=\langle A_{\hbox{\tiny +}}v,v\rangle_V$, we get 
 \begin{equation}\label{eq:pf1}
 \|Sv\|_V\geq \sqrt{\alpha} \|v\|_V,\,\, v\in V.
 \end{equation}
 Since $\|Sv\|\leq \|A_{\hbox{\tiny +}}\|^{1/2}\|v\| \le \sqrt{M}\|v\| $, we also get 
 \begin{equation}\label{eq:pf2}
  \|S^{-1} v\|   \geq \frac{1}{\sqrt{M}} \|v\|,\, \, v\in V.  
 \end{equation} 
  
 In addition, note that 
 \[    \langle A_{\hbox{\tiny -}}v,v\rangle_V=\langle v,A^*_{\hbox{\tiny -}} v\rangle_V =-\langle v,A_{\hbox{\tiny -}}v\rangle_V,\,\, v\in V,\]
 and therefore 
 \begin{equation}\label{eq:pf3}
  \langle A_{\hbox{\tiny -}} v,v\rangle_V =0,\,\, v\in V.
 \end{equation}
 Using \eqref{eq:pf1} we first get, for all $v\in W$, 
 \begin{eqnarray*}
  \|w + Av\|^2\ & \geq &  \alpha (\|S^{-1}w + S^{-1}A_{\hbox{\tiny +}} v+ S^{-1}A_{\hbox{\tiny -}}v\|_V^2)  \\
   & = &   \alpha( \|S^{-1}w\|_V^2 + \|S^{-1}A_{\hbox{\tiny +}}v\|_V^2 +\|S^{-1}A_{\hbox{\tiny -}}v\|_V^2 )  \\
    &  & + 2\alpha \langle S^{-1}w, S^{-1}A_{\hbox{\tiny +}} v\rangle_V + 2\alpha \langle S^{-1}A_{\hbox{\tiny +}}v, S^{-1}A_{\hbox{\tiny -}} v\rangle_V \\
      &  & + 2\alpha \langle S^{-1}w, S^{-1}A_{\hbox{\tiny -}} v\rangle_V .  
 \end{eqnarray*}
Note that 
\[ \langle S^{-1}A_{\hbox{\tiny +}}v, S^{-1}A_{\hbox{\tiny -}} v\rangle_V = \langle S^{-2}A_{\hbox{\tiny +}}v, A_{\hbox{\tiny -}} v\rangle_V =  \langle v, A_{\hbox{\tiny -}} v\rangle_V =0\mbox{ by }\eqref{eq:pf3}.\]
Moreover
\[  \langle S^{-1}w, S^{-1}A_{\hbox{\tiny +}} v\rangle_V=  \langle w, S^{-2}A_{\hbox{\tiny +}} v\rangle_V = \langle w,  v\rangle_V.\]
In addition, 
\[\|S^{-1}A_{\hbox{\tiny +}}v\|_V^2=\langle S^{-1}A_{\hbox{\tiny +}}v,S^{-1}A_{\hbox{\tiny +}}v\rangle_V=\langle A_{\hbox{\tiny +}}v,v\rangle_V=\|Sv\|_V^2.\]
It follows that  
\begin{eqnarray*}
	\|w + Av\|^2\ & \geq &  2\alpha\langle w,  v\rangle_V+  \alpha (\|S^{-1}w\|_V^2 + \|Sv\|_V^2 +\|S^{-1}A_{\hbox{\tiny -}}v\|_V^2 )  \\
	&  & + 2\alpha  \langle S^{-1}w, S^{-1}A_{\hbox{\tiny -}} v\rangle_V .  
\end{eqnarray*}
Now we use the celebrated Peter--Paul inequality combined with the Cauchy-Schwarz inequality to estimate, for all $\gamma>0$,   
\begin{eqnarray*}
2 \left|  \langle S^{-1}w, S^{-1}A_{\hbox{\tiny -}} v\rangle_V \right|& \leq &  2 \|S^{-1}w\|_V \|S^{-1}A_{\hbox{\tiny -}}v\|_V\\
 & \leq & \gamma \|S^{-1}w\|_V^2 +\frac{1}{\gamma} \|S^{-1}A_{\hbox{\tiny -}}v\|_V^2.
\end{eqnarray*}
We then get, choosing $\gamma <1$,
\[
	\|w+Av\|^2\  \geq   2\alpha\langle w,  v\rangle_V+ \alpha (1-\gamma)\|S^{-1}w\|_V^2 +\alpha (1-1/\gamma )\|S^{-1}A_{\hbox{\tiny -}}v\|_V^2
		+\alpha \|Sv\|_V^2.\]
We have
\[
 \|S^{-1}A_{\hbox{\tiny -}}v\|_V \le \frac {M} {\sqrt{\alpha}} \|v\|_V,
\]
which gives
\[
	\|w+Av\|^2\  \geq   2\alpha\langle w,  v\rangle_V+ \alpha (1-\gamma)\|S^{-1}w\|_V^2 + M^2(1-1/\gamma )\|v\|_V^2
		+\alpha \|Sv\|_V^2.\]
Let $\gamma=\frac{1}{1+s}$ where $s>0$ will be fixed later.  Then $1-\gamma=\frac{s}{1+s}$ and $1-1/\gamma = -s$.  Then, by \eqref{eq:pf1} and \eqref{eq:pf2}, it follows that 
\[ 	\|w+Av\|^2 \geq 2\alpha \langle w,  v\rangle_V+ \frac{s}{1+s}\frac{\alpha}{M}\|w\|_V^2 +\alpha^2\|v\|_V^2 -s M^2 \|v\|_V^2.    \]
Then we choose $s	=\frac{\alpha^2}{2 M^2}$ and we obtain 
\[  	\|w+Av\|^2 \geq 2\alpha\langle w,  v\rangle_V+  \beta (\|w\|_V^2 +\|v\|_V^2),   \]
where, using $\alpha\le M$ and $1\le M$,
\[
 \beta	=\min \left\{   \frac{\alpha^2}{2}, \frac{\alpha^2}{\alpha^2 +2M^2}\frac{\alpha}{M}   \right\} = \frac{\alpha^2}{\alpha^2 +2M^2}\frac{\alpha}{M}  \ge \frac 1 3 \Big(\frac \alpha M\Big)^3.
\]
 \end{proof}

\begin{lem}\label{lem:peterpaul2}
Let $H$ be a Hilbert space, and let $\Phi~:~H\to H$ be a contraction (which means that $\Vert\Phi\Vert\le 1$). Let $a>0$ and $b\in [0,a]$ be given reals such that $\gamma := a - b\Vert\Phi\Vert^2>0$. The following estimate holds
 \begin{equation}\label{eq:peterpaul2}
  a\| w\|_{H}^2 - b \| v\|_{H}^2 +  \frac {9 a^2}{\gamma}\| v - \Phi w\|_{H}^2 \geq \frac {\gamma} 3 (\| w\|_{H}^2+\| v\|_{H}^2)\mbox{ for all }v,w\in H .
 \end{equation} 
\end{lem}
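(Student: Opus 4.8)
The plan is to bring the right-hand side over and prove that the quadratic expression
\[
\Big(a-\tfrac\gamma3\Big)\|w\|_H^2-\Big(b+\tfrac\gamma3\Big)\|v\|_H^2+\frac{9a^2}{\gamma}\|v-\Phi w\|_H^2
\]
is nonnegative for all $v,w\in H$. Since the only negative contribution is the middle term, the first step is to bound $\|v\|_H^2$ from above. Writing $r:=v-\Phi w$, so that $v=\Phi w+r$, I would invoke the Peter--Paul (Young) inequality: for every $\eta>0$,
\[
\|v\|_H^2=\|\Phi w+r\|_H^2\le(1+\eta)\|\Phi w\|_H^2+(1+\eta^{-1})\|r\|_H^2,
\]
and then use the contraction hypothesis in the form $\|\Phi w\|_H^2\le\|\Phi\|^2\|w\|_H^2$.

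Substituting this estimate, the displayed expression is at least $P(\eta)\|w\|_H^2+Q(\eta)\|r\|_H^2$, where, setting $p:=\|\Phi\|^2\in[0,1]$,
\[
P(\eta)=a-\tfrac\gamma3-\Big(b+\tfrac\gamma3\Big)(1+\eta)p,\qquad Q(\eta)=\frac{9a^2}{\gamma}-\Big(b+\tfrac\gamma3\Big)(1+\eta^{-1}).
\]
It then suffices to produce a single $\eta>0$ with $P(\eta)\ge0$ and $Q(\eta)\ge0$ simultaneously. The defining identity $\gamma=a-bp$ (equivalently $bp=a-\gamma$) is what I would use throughout to eliminate $b$. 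Indeed, $P(\eta)\ge0$ reads $(3b+\gamma)(1+\eta)p\le 3a-\gamma$, and since $3a-\gamma-(3b+\gamma)p=\gamma(2-p)>0$, this holds for all $\eta$ up to $\eta_{\max}:=\gamma(2-p)/\big((3b+\gamma)p\big)$ (with $\eta_{\max}=+\infty$ when $p=0$). Likewise $Q(\eta)\ge0$ reads $(3b+\gamma)(1+\eta^{-1})\le 27a^2/\gamma$ and holds for all $\eta\ge\eta_{\min}:=\gamma(3b+\gamma)/\big(27a^2-\gamma(3b+\gamma)\big)$, the denominator being positive because $\gamma(3b+\gamma)\le 4a^2$.

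The remaining step is to check $\eta_{\min}\le\eta_{\max}$, i.e.\ that the admissible interval is nonempty. After clearing the (positive) denominators this amounts to
\[
(3b+\gamma)^2\,p\le(2-p)\big(27a^2-\gamma(3b+\gamma)\big),
\]
which I would verify with the crude bounds $\gamma\le a$ and $b\le a$ (so $3b+\gamma\le 4a$) together with $0\le p\le1$: the left side is then at most $16a^2$, while the right side is at least $23a^2$. Choosing any $\eta\in[\eta_{\min},\eta_{\max}]$ (or letting $\eta\to\infty$ in the degenerate case $p=0$) completes the argument.

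The genuinely delicate point is not any individual estimate but the bookkeeping that guarantees $[\eta_{\min},\eta_{\max}]\neq\varnothing$, that is, that the prescribed constant $9a^2/\gamma$ is large enough to absorb the term $\|v-\Phi w\|_H^2$ no matter how close $\|\Phi\|$ is to $1$. The comfortable slack above ($16a^2$ against $23a^2$) indicates that the constant is far from optimal, so the Peter--Paul parameter $\eta$ need not be tuned with any care.
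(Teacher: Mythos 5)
Your proof is correct and follows essentially the same route as the paper: both arguments reduce the inequality to the Peter--Paul (Young) inequality applied to the cross term between $v$ and $\Phi w$, with a free parameter tuned so that the coefficients of $\|w\|_H^2$ and of the penalty term are simultaneously nonnegative. The only difference is organizational --- the paper solves explicitly for the parameters ($\mu$, $\theta$, $\alpha$) and checks $\alpha\ge\gamma/3$, $\theta\le 9a^2/\gamma$, whereas you show the admissible interval $[\eta_{\min},\eta_{\max}]$ is nonempty by the crude bounds $16a^2\le 23a^2$ --- and your bookkeeping checks out.
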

\begin{proof}
Let $v,w\in H$ be given.
 By the Peter-Paul inequality, we have for any $\mu>0$,
 \[
  -2 \langle v, \Phi w\rangle_H \ge -\mu \| \Phi w\|_{H}^2- \frac 1 \mu \| v\|_{H}^2.
 \]
Choosing $\mu>1$ and using that $\Phi$ is a contraction, this implies that
\[
 \Vert v-{\Phi} w\Vert_H^2 \ge \Vert v\Vert_H^2(1 - \frac 1 \mu) - \Vert {\Phi} w\Vert_H^2 (\mu - 1)\ge\Vert v\Vert_H^2(1 - \frac 1 \mu) - \Vert w\Vert_H^2\Vert\Phi\Vert^2 (\mu - 1).
\]
Let  $\beta := b\Vert\Phi\Vert^2 \in [0,a)$ and $ \mu := \frac {\beta + 2 a}{2 \beta + a}\in (1,+\infty)$. Let $\theta>0$ and $\alpha>0$ be such that
\[
 \theta (1 - \frac 1 \mu) = b + \alpha
\]
and
\[
 \theta (\mu - 1)\Vert\Phi\Vert^2 =  a - \alpha.
\]
Using $\Vert\Phi\Vert^2\le 1$, this system is satisfied for
\[
 \alpha = \frac {(a-\beta)(a+\beta)}{2\beta + a + \Vert\Phi\Vert^2(2a+\beta)}\in [ \frac {\gamma}{3}, 2 a].
\]
Using the preceding equation and $b\le a$, we get
\[
\theta =  \frac {2a+\beta}{a-\beta}(b +\alpha) \le \frac {9 a^2}{\gamma}.
\]
We then obtain
\begin{multline*}
 a\| w\|_{H}^2 - b \| v\|_{H}^2 +  \frac {9 a^2}{\gamma}\| v - \Phi w\|_{H}^2 \ge a\| w\|_{H}^2 - b \| v\|_{H}^2 +  \theta\| v - \Phi w\|_{H}^2 \\
 \ge (a - \theta (\mu - 1)\Vert\Phi\Vert^2)\| w\|_{H}^2+(\theta (1 - \frac 1 \mu) -b) \| v\|_{H}^2  = \alpha(\| w\|_{H}^2+\| v\|_{H}^2)\ge \frac {\gamma} 3 (\| w\|_{H}^2+\| v\|_{H}^2).
\end{multline*}
The above inequality gives \eqref{eq:peterpaul2}.
\end{proof}

We now give a sufficient condition for proving the BNB condition for the different schemes studied here.
\begin{lem}\label{lem:suffbnb}
Let $V$ and $H$ be Hilbert spaces. Let $\widehat{Z}$ and $Y$ be the Hilbert spaces defined by $\widehat{Z} = V'\times V\times H\times H$ and $Y = V\times H$. Let ${\mathcal A}:V\to V'$ be a continuous and coercive linear operator in the sense that there exist $0< \alpha\le M$ such that
\begin{equation}\label{eq:continuousappendix}
 \Vert \mathcal{A} v\Vert_{V'}\le M \Vert v\Vert_{V}\mbox{ for all }v\in V,
\end{equation}
and
\begin{equation}\label{eq:coerciveappendix}
 \alpha \Vert v\Vert_{V}^2\le \langle \mathcal{A} v,v\rangle_{V',V}\mbox{ for all }v\in V.
\end{equation}
Let $\Phi:H\to H$ be a linear operator such that $\Vert \Phi\Vert\le 1$. We define $\widehat{b}~:~\widehat{Z}\times Y\to \mathbb{R}$ by
 \begin{equation}\label{eq:defblemme} 
 \widehat{b}((z_1,z_2,z_3,z_4),(y_1,y_2)) = \langle z_1 + \mathcal{A} z_2,y_1\rangle_{V',V} + \langle z_3 - \Phi z_4,y_2\rangle_{H},
 \end{equation}
for all $(z_1,z_2,z_3,z_4)\in \widehat{Z}$ and for all $(y_1,y_2)\in Y$.

 Let $\widehat{X}\subset \widehat{Z}$ be a subspace of $\widehat{Z}$. We define the spaces $\widehat{X}_i$ for $i=1,2,3,4$ by
 \[
  \widehat{X}_i = \overline{\{ x_i: \ x\in \widehat{X}\} },\hbox{ where }x_i\hbox{ is the $i$-th component of }x\in \widehat{Z}.
 \]
 Then $\widehat{X}_1\subset V'$, $\widehat{X}_2\subset V$, $\widehat{X}_3\subset H$ and $\widehat{X}_4\subset H$ are Hilbert spaces. We assume that
 \begin{equation}\label{eq:hypxun}
  \langle x_1,v\rangle_{V',V} = \langle x_1,P_{2}^V v\rangle_{V',V}\mbox{ for all }x_1\in \widehat{X}_1 \mbox{ and }v\in V,
 \end{equation}
where we denote by $P_{2}^V$ the orthogonal projection on $\widehat{X}_2$ in $V$. 
Assume that there exist $\omega>0$ and $\delta >0$ such that
\begin{equation}\label{eq:condlim}
 \langle x_1,x_2\rangle_{V',V} +\frac {\alpha^2} {12\, M^3}(\Vert  x_2\Vert_V^2 + \Vert x_1\Vert_{V'}^2)\ge \mu\Vert x_4\Vert_H^2 -\nu \Vert x_3\Vert_H^2\mbox{ for all }x\in \widehat{X},
\end{equation}
for some $\mu\in (0,\omega]$, $\nu\in [0,\mu]$ with $\mu - \nu\Vert\Phi\Vert^2 \ge \delta$.
 Then,  there exists $\widehat{\beta}>0$, only depending on $\alpha$, $M$, $\omega$ and $\delta$ (and not on $\mu$, $\nu$ and $\Vert\Phi\Vert$) such that
 \begin{equation}\label{eq:bnbdisc}
  \sup_{y\in \widehat{X}_2\times \widehat{X}_3, \Vert y\Vert_Y = 1} \widehat{b}(x,y) \ge  \widehat{\beta} \Vert x\Vert_{\widehat{Z}}\mbox{ for all } x\in \widehat{X}.
 \end{equation}
\end{lem}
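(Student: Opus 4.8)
The plan is to evaluate the supremum defining the BNB quantity exactly and then estimate its two pieces separately. Writing $x=(z_1,z_2,z_3,z_4)$ and noting that $\|(y_1,y_2)\|_Y^2=\|y_1\|_V^2+\|y_2\|_H^2$ while $\widehat b$ splits into a term depending only on $y_1\in\widehat X_2$ and a term depending only on $y_2\in\widehat X_3$, optimizing over the joint unit sphere gives
\begin{equation*}
 \sup_{y\in\widehat X_2\times\widehat X_3,\ \|y\|_Y=1}\widehat b(x,y)=\big(A^2+C^2\big)^{1/2},
\end{equation*}
where
\begin{equation*}
 A:=\sup_{y_1\in\widehat X_2,\,\|y_1\|_V=1}\langle z_1+\mathcal A z_2,y_1\rangle_{V',V},
 \qquad
 C:=\sup_{y_2\in\widehat X_3,\,\|y_2\|_H=1}\langle z_3-\Phi z_4,y_2\rangle_H .
\end{equation*}
It therefore suffices to prove $A^2+C^2\ge\widehat\beta^2\|x\|_{\widehat Z}^2$ with $\widehat\beta$ depending only on $\alpha,M,\omega,\delta$.

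For the $V'$-piece I would pass to $V$ through the Riesz isomorphism $R:V'\to V$, so that $A=\|P_2^V R(z_1+\mathcal A z_2)\|_V$. Hypothesis \eqref{eq:hypxun} says precisely that $Rz_1\in\widehat X_2$ (i.e. $P_2^V Rz_1=Rz_1$), whence, setting $\widetilde A:=P_2^V R\mathcal A$ restricted to $\widehat X_2$, one obtains $A=\|Rz_1+\widetilde A z_2\|_V$ with $Rz_1,z_2\in\widehat X_2$. A short check shows $\widetilde A$ is $M$-continuous and $\alpha$-coercive on the Hilbert space $\widehat X_2$, so Lemma \ref{lem:zigoto} applies and yields
\begin{equation*}
 A^2\ge 2\alpha\langle z_1,z_2\rangle_{V',V}+\tfrac{\alpha^3}{3M^3}\big(\|z_1\|_{V'}^2+\|z_2\|_V^2\big).
\end{equation*}
Since $\tfrac{\alpha^3}{3M^3}=4\alpha\cdot\tfrac{\alpha^2}{12M^3}$, I would split this coefficient in half and feed \eqref{eq:condlim} into the half that accompanies $\langle z_1,z_2\rangle$, which gives
\begin{equation*}
 A^2\ge 2\alpha\,\tfrac{\alpha^2}{12M^3}\big(\|z_1\|_{V'}^2+\|z_2\|_V^2\big)+2\alpha\mu\|z_4\|_H^2-2\alpha\nu\|z_3\|_H^2 .
\end{equation*}

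For the $H$-piece the key point is that $C=\|z_3-\Psi z_4\|_H$ with $\Psi:=P_3^H\Phi$, where $P_3^H$ is the orthogonal projection of $H$ onto $\widehat X_3$ (here $z_3\in\widehat X_3$ is used). As $\|\Psi\|\le\|\Phi\|\le1$, $\Psi$ is again a contraction and $\gamma:=\mu-\nu\|\Psi\|^2\ge\mu-\nu\|\Phi\|^2\ge\delta>0$, so Lemma \ref{lem:peterpaul2} applies with $a=\mu$, $b=\nu$. Because the term it produces carries the nonnegative coefficient $\tfrac{9\mu^2}{\gamma}\le 9\omega^2/\delta=:\Lambda_0$, I would enlarge that coefficient to $\Lambda_0$ to get
\begin{equation*}
 \mu\|z_4\|_H^2-\nu\|z_3\|_H^2+\Lambda_0 C^2\ge\tfrac{\gamma}{3}\big(\|z_3\|_H^2+\|z_4\|_H^2\big)\ge\tfrac{\delta}{3}\big(\|z_3\|_H^2+\|z_4\|_H^2\big).
\end{equation*}
Multiplying by $2\alpha$ and adding the bound for $A^2$ yields $A^2+2\alpha\Lambda_0 C^2\ge c_0\|x\|_{\widehat Z}^2$ with $c_0=\min\{\tfrac{\alpha^3}{6M^3},\tfrac{2\alpha\delta}{3}\}$. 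Finally, since $A^2+2\alpha\Lambda_0C^2\le\max\{1,2\alpha\Lambda_0\}(A^2+C^2)$, one concludes $A^2+C^2\ge\widehat\beta^2\|x\|_{\widehat Z}^2$ with $\widehat\beta^2=c_0/\max\{1,2\alpha\Lambda_0\}$, which depends only on $\alpha,M,\omega,\delta$.

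Each step is routine once set up; the point requiring care — and the main obstacle — is the $H$-piece. The naive test pairing $z_3-\Phi z_4$ against $\widehat X_3$ forces the projected operator $\Psi=P_3^H\Phi$ to appear, since $\Phi z_4$ need not lie in $\widehat X_3$. Recognizing that $\Psi$ is still a contraction (so the hypothesis $\mu-\nu\|\Phi\|^2\ge\delta$ survives as $\gamma\ge\delta$), and that the coercivity coefficient $9\mu^2/\gamma$ supplied by Lemma \ref{lem:peterpaul2} may be traded for the uniform constant $\Lambda_0$, is exactly what makes $\widehat\beta$ independent of $\mu$, $\nu$ and $\|\Phi\|$.
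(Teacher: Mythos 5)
Your proof is correct and follows essentially the same route as the paper: the same orthogonal decomposition of the supremum into the $V'$-piece and the $H$-piece, Lemma \ref{lem:zigoto} applied to the Riesz-projected operator on $\widehat X_2$ combined with hypothesis \eqref{eq:condlim}, and Lemma \ref{lem:peterpaul2} applied to the projected contraction $P_3^H\Phi$. The only (cosmetic) difference is the bookkeeping of the weight: you enlarge the coefficient $9\mu^2/\gamma$ to the uniform constant $\Lambda_0$ and divide by $\max\{1,2\alpha\Lambda_0\}$ at the end, whereas the paper introduces the weight $\theta=9a^2/\gamma$ up front via the splitting $\mathcal N(x)^2\ge\frac{1}{\max(1,\theta)}(\cdot)+\frac{\theta}{\max(1,\theta)}(\cdot)$; the resulting constants are equivalent.
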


\begin{proof}
By the Riesz representation theorem, there exists an operator $R~:~\widehat{X}_1\to \widehat{X}_2$ 
\begin{equation}\label{eq:defdzb}
\langle  x_1, y_2\rangle_{V',V} = \langle R x_1,y_2\rangle_V\mbox{ for all } x_1\in \widehat{X}_1 \mbox{ and } y_2\in \widehat{X}_2.
\end{equation}

We then have, in view of \eqref{eq:hypxun},
\begin{equation}\label{eq:dzb}
\Vert  x_1\Vert_{V'} = \Vert R x_1 \Vert_{V}\mbox{ for all }x_1\in \widehat{X}_1.
\end{equation}
We also define  the operator $A~:~\widehat{X}_2\to  \widehat{X}_2$ by
\begin{equation}\label{eq:defopA}
\langle A x_2,y_2\rangle_V :=\langle \mathcal{A} x_2, y_2\rangle_{V',V}\mbox{ for all }x_2\in \widehat{X}_2\mbox{ and }y_2\in \widehat{X}_2  .
\end{equation}
Then $A$ is $\alpha$-coercive and $\Vert A \Vert\le M$.
Let $ P^H_{3}:H\to \widehat{X}_3\subset H$ be the orthogonal projection on $\widehat{X}_3$.
Then 
\begin{equation}\label{eq:bnbX0}
	\mathcal{N}(x)^2
	= \Vert R x_1 + A x_2\Vert_{V}^2 + \Vert P^H_{3}(x_3 - \Phi x_4)\Vert_H^2\mbox{ for all }x\in \widehat{X},
	\end{equation}
where
\[
 \mathcal{N}(x) = \sup_{y\in \widehat{X}_2\times \widehat{X}_3, \Vert y\Vert_Y = 1} \widehat{b}(x,y)\mbox{ for all }x\in \widehat{X}.
\]
We then obtain, for $\theta>0$ to be chosen later

\begin{equation}\label{eq:bnbX}
		\mathcal{N}(x)^2
	\ge \frac 1 {\max(1,\theta)} \Vert R x_1 + A x_2\Vert_{V}^2 + \frac{\theta}{\max(1,\theta)} \Vert P^H_{3}(x_3 - \Phi x_4)\Vert_H^2\mbox{ for all }x\in \widehat{X}.
	\end{equation}

 We apply Lemma \ref{lem:zigoto} to $V = \widehat{X}_2$ with the scalar product of $V$ and obtain, by \eqref{eq:defdzb},
\[
 \Vert  R x_1 +  A x_2\Vert_{V}^2  \ge 2\alpha \langle R x_1,x_2\rangle_{V} + \frac 1 3 (\frac \alpha M)^3(\Vert  x_2\Vert_V^2 + \Vert R x_1 \Vert_{V}^2 ).
\]
Now \eqref{eq:condlim} implies that
\[
 \Vert  R x_1 +  A x_2\Vert_{V}^2  \ge 2\alpha(\mu\Vert x_4\Vert_H^2  - \nu\Vert  x_3\Vert_H^2)+ (\frac 1 3 (\frac \alpha M)^3-2\alpha\frac {\alpha^2} {12\, M^3}) (\Vert  x_2\Vert_V^2 + \Vert x_1 \Vert_{V'}^2 ).
\]
Together with \eqref{eq:bnbX}, this yields
\[
 \max(1,\theta)\mathcal{N}(x)^2 \ge 2\alpha(\mu\Vert x_4\Vert_H^2  - \nu\Vert  x_3\Vert_H^2)+ \frac {\alpha^3} {6\, M^3} (\Vert  x_2\Vert_V^2 + \Vert x_1 \Vert_{V'}^2 ) 
  + \theta\Vert P^H_{3}(x_3 - \Phi x_4)\Vert_H^2.
\]
Noting that $ P^H_{3}(x_3 - \Phi x_4) = x_3 - P^H_{3}\Phi x_4$ and that $\Vert P^H_{3}\circ\Phi \Vert\le \Vert\Phi\Vert \le 1$, we now use Lemma \ref{lem:peterpaul2}, in which we define $a= 2\alpha\mu$, $b= 2\alpha\nu$. In this way we obtain $\gamma \ge 2\alpha(\mu - \nu\Vert\Phi\Vert^2)\ge 2\alpha\delta$. If we set
$\theta = \frac {9 a^2}{\gamma} \le \frac {18\alpha\mu^2}{\mu - \nu\Vert\Phi\Vert^2}\le  \frac {18\alpha\omega^2}{\delta}$, we get
\[
 2\alpha(\mu\Vert x_4\Vert_H^2  - \nu\Vert  x_3\Vert_H^2) + \theta \Vert  P^H_{3}(x_3 - \Phi x_4)\Vert_H^2
 \ge  \frac{\gamma} {3} (\Vert x_3\Vert_H^2+\Vert x_4\Vert_H^2).
\]
This in turn gives
\[
\max(1,\frac {18\alpha\omega^2}{\delta}) \mathcal{N}(x)^2
 \ge   \frac {\alpha^3} {6\, M^3}  (\Vert  x_1\Vert_{V'}^2 + \Vert x_2\Vert_{V}^2) + \frac{2\alpha\delta} {3}(\Vert x_3\Vert_H^2+\Vert x_4\Vert_H^2),
\]
which leads to \eqref{eq:bnbdisc}.
\end{proof}

\section{Inverse of the Gram matrix}\label{sec:gram}

The following lemma is used in Section \ref{sec:dg}.
\begin{lem}\label{lem:gramat}
Let $q\in \mathbb{N}$. We consider the scalar product $\langle f,g\rangle = \int_0^1 f(x)g(x) {\rm d}x$ on the space $\mathcal{P}^q(\mathbb{R};\mathbb{R})$ with its natural basis. Let $A$ be the resulting square Gram matrix  with side $q+1$ defined by $A_{ij} = \int_0^1 x^{i+j}{\rm d}x$, therefore equal to $A = (\frac 1 {i+j+1})_{i,j=0,\ldots,q}$. Then $A$ is symmetric positive definite and
the coefficients of the inverse of $A$ are the nonzero integers  given by
\begin{multline}\label{eq:defgramminusone}
 (A^{-1})_{ij} =  (-1)^{i+j}\sum_{k=\max(i,j)}^{q}(2k+1)\mathcal{C}( k ,i)\mathcal{C}( k+i ,i)\mathcal{C}( k ,j)\mathcal{C}( k+j ,j)\\
 \mbox{ for all } i,j=0,\ldots,q,
\end{multline}
where we denote the binomial coefficients by
\[
  \mathcal{C}( k ,j) = \frac{ k!}{j!(k-j)!}\mbox{ for all }k\in\mathbb{N}\mbox{ and } j=0,\ldots,k .
\]
\end{lem}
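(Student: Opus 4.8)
The plan is to recognize $A$ as the $(q+1)\times(q+1)$ Hilbert matrix and to diagonalize the underlying inner product by passing to the shifted Legendre polynomials. First, $A$ is the Gram matrix of the linearly independent family $\{1,x,\dots,x^q\}$ for the inner product $\langle f,g\rangle=\int_0^1 fg$, so for any nonzero $v=(v_0,\dots,v_q)$ one has $v^\top A v=\langle p,p\rangle=\int_0^1 p^2>0$ with $p=\sum_i v_i x^i\neq 0$; hence $A$ is symmetric positive definite.

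For the explicit inverse, I would introduce the shifted Legendre polynomials $L_0,\dots,L_q$, i.e. the orthogonal polynomials on $[0,1]$ for this inner product, which satisfy $\langle L_k,L_\ell\rangle=\frac{1}{2k+1}\delta_{k\ell}$ and admit the closed form $L_k(x)=\sum_{i=0}^k c_{k,i}\,x^i$ with $c_{k,i}=(-1)^{k+i}\binom{k}{i}\binom{k+i}{i}$. Collecting these coefficients into the lower--triangular matrix $C=(c_{k,i})_{k,i=0,\dots,q}$ (with $c_{k,i}=0$ for $i>k$, and invertible since its diagonal entries $c_{k,k}=\binom{2k}{k}$ are nonzero), the orthogonality relations read $\langle L_k,L_\ell\rangle=\sum_{i,j}c_{k,i}A_{ij}c_{\ell,j}=(CAC^\top)_{k\ell}$, that is $CAC^\top=D$ where $D=\mathrm{diag}\bigl(\tfrac{1}{2k+1}\bigr)$.

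Inverting this identity gives $A^{-1}=C^\top D^{-1}C$, whence $(A^{-1})_{ij}=\sum_{k=0}^q(2k+1)\,c_{k,i}\,c_{k,j}$. Since $c_{k,i}=0$ unless $i\le k$, the sum runs over $k\ge\max(i,j)$, and using $(-1)^{k+i}(-1)^{k+j}=(-1)^{i+j}$ the sign factors out, producing exactly formula \eqref{eq:defgramminusone}. Integrality is then clear ($c_{k,i}$ is a product of binomial coefficients and $2k+1\in\mathbb{N}$), and non-vanishing follows because the remaining sum $\sum_{k=\max(i,j)}^q(2k+1)\binom{k}{i}\binom{k+i}{i}\binom{k}{j}\binom{k+j}{j}$ has only strictly positive terms and at least one term (namely $k=\max(i,j)\le q$).

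The only genuinely non-trivial input is the closed form and normalization of $L_k$. I would either invoke the standard theory of Jacobi/shifted Legendre polynomials, or, to stay self-contained, define $L_k$ by Rodrigues' formula $L_k(x)=\frac{1}{k!}\frac{d^k}{dx^k}\bigl[x^k(x-1)^k\bigr]$; expanding $x^k(x-1)^k$ by the binomial theorem and differentiating $k$ times yields precisely the coefficients $c_{k,i}$, while repeated integration by parts against $x^m$ gives $\langle L_k,x^m\rangle=0$ for $m<k$ (orthogonality to lower degrees, hence mutual orthogonality) and the value $\frac{1}{2k+1}$ for $\langle L_k,L_k\rangle$. This verification, which reduces to elementary combinatorial identities, is the main obstacle; everything else is the linear--algebraic bookkeeping above.
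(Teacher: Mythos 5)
Your proof is correct and follows essentially the same route as the paper: both diagonalize the Gram form via the shifted Legendre polynomials on $[0,1]$ (given by the Rodrigues formula), read off $A^{-1}$ from the resulting factorization, and use the explicit coefficients $(-1)^{k+i}\mathcal{C}(k,i)\mathcal{C}(k+i,i)$. The only difference is cosmetic — the paper normalizes the polynomials by $\sqrt{2k+1}$ so that $P^{t}AP=\mathrm{Id}$ and $A^{-1}=PP^{t}$, whereas you keep the diagonal matrix $D=\mathrm{diag}(1/(2k+1))$ explicit and write $A^{-1}=C^{\top}D^{-1}C$.
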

\begin{proof}
We consider the Legendre polynomials defined by
\[
 P_k(x) = \frac {{\rm d}^k}{{\rm d}x^k}\Big(\frac {\sqrt{2k+1}}{k!} (x(1-x))^k\Big)\mbox{ for all }x\in\mathbb{R}\mbox{ and }  k=0,\ldots,q,.
\]
Denoting these polynomials by $P_k(x) =\sum_{i=0}^k P_{ik} x^i$, one defines the upper triangular matrix $P$ with side $q+1$ such that $P_{ik}$ is the coefficient of $P$ at the column $k=0,\ldots,q$ and at the line $i=0,\ldots,k$. 
Since we can check, integrating by parts, that
\[
 \int_0^1 x^iP_k(x){\rm d}x = 0\mbox{ for all } k=1,\ldots,q\mbox{ and } i=0,\ldots,k-1
\]
and
\[
 \int_0^1(P_k(x))^2{\rm d}x = 1,
\]
we get that the family of polynomials $(P_k)_{k=0,\ldots,q}$ is an orthonormal basis of $\mathcal{P}^q(\mathbb{R};\mathbb{R})$ (it is the basis issued from the Gram-Schmidt method applied to the basis $(x^i)_{i=0,\ldots,q}$). The relation $(\langle P_k,P_\ell\rangle)_{\substack{k=0,\ldots,q\\\ell=0,\ldots,q}} = P^t A P= {\rm Id}$ implies $A = (P^t)^{-1}P^{-1}$ and therefore $A^{-1} = P P^t$.
The expression
\[
  P_{ik} = (-1)^{k-i} \sqrt{2k+1}\mathcal{C}( k ,i)\mathcal{C}( k+i ,i)\mbox{ for all }k=0,\ldots,q \mbox{ and } i=0,\ldots,k
\]
concludes the proof of the lemma.
\end{proof}

\bibliographystyle{abbrv}

\end{document}